\newcommand*\dif{\mathop{}\!\mathrm{d}}  % simbolo differenziale
\newtheorem{thm}{Theorem}[section] %[theorem]
\newtheorem{cor} {Corollary}[section] %[theorem]
\theoremstyle{definition} % plain
\newtheorem{ex}{Example}[section]
\newtheorem{rem}{Remark}[section] %[theorem]
\numberwithin{equation}{section} % equazioni con numero interno alla sezione
\providecommand{\keywords}[1]
{
  \small	
  \textbf{\textit{Keywords: }} #1
}
\providecommand{\MSC}[1]
{
  \small	
  \textit{2020 MSC: } #1   
}
\title{The Negative Reflection Principle and the Joint Distribution of the Telegraph Process and its Maximum}
\author{Fabrizio Cinque$^1$\\
        \small Department of Statistical Sciences, Sapienza University of Rome, Italy \\
        \small $^1$fabrizio.cinque@uniroma1.it
}
\date{May 22, 2021} % Comment this line to show today's date
\begin{document}

\maketitle

\begin{abstract}
%% Text of abstract
In this paper we study the joint distributions of the telegraph process and its maximum conditioned on the number of changes of direction and the initial velocity. We prove that in the case of positive starting velocity, a form of the reflection principle holds. We call it the negative reflection principle and we generalize it to the paths of the random motions moving with constant finite velocity. In the last section we obtain the conditional distribution of the first passage time of the telegraph process by using the known results on the maximum of the motion. Finally, we derive the distribution of the first returning time to the origin.
\end{abstract} \hspace{10pt}

\keywords{Telegraph Process; First Passage Time; Reflection Principle; Induction Principle} % insert keywords separated by a semicolon

\MSC{Primary 60G51; 60K99}

% ------  CORPO  ---------------------------------------------------------------------------------------

\section{Introduction}

The one-dimensional symmetric telegraph process describes the position of a particle starting from the origin $x=0$ at time $t=0$ and moving forwards and backwards on the real line. It moves alternatively with two finite constant velocity $+c$ and $-c$, with $c>0$, and the initial speed is uniformly chosen between the two possible alternatives. The changes of direction are paced by a homogeneous Poisson process $\{N(t)\}_{t\ge0}$ of rate $\lambda>0$, meaning that the displacements of the particle are exponentially distributed with average length $c/\lambda$. By denoting the initial velocity of the motion with $V(0) \sim Unif \{-c, +c\}$, we can define the symmetric telegraph process $\{\mathcal{T}(t)\}_{t\ge0}$ as follows
\begin{equation}\label{INTROdefinizioneProcessoTelegrafo}
\mathcal{T}(t) \coloneqq V(0) \int_0^t (-1)^{N(s)} \dif s = V(0)\sum_{i=0}^{N(t)-1} \Bigl(T_{i+1}-T_{i} \Bigr) (-1)^{i} + V(0)(-1)^{N(t)}\Bigl(t-T_{N(t)}\Bigr)
\end{equation}
where $T_i$ is the $i$-th arrival time of the Poisson process for $i = 1, \dots, N(t)$ and $T_0 = 0 \ a.s.$
\\

The telegraph process is the prototype of the finite speed random motions and it has been formally presented in \cite{G1951}. Several authors have studied the symmetric telegraph process (\ref{INTROdefinizioneProcessoTelegrafo}), see \cite{K1974}, \cite{O1990}, as well as its generalizations. The most common one is the asymmetric motion with velocities $c_1$ and $-c_2,\ 0<c_1\not =c_2>0$, and two possible rates of reversals $\lambda_1\not =\lambda_2>0$, whose probability law was first obtained in \cite{BNO2001} and has been further investigate in many papers, for instance \cite{DcIMZ}. We also recall the motion with Erlang distributed displacements, see \cite{Dc2001}, and the motion describing a particle that uniformly chooses its velocity in the continuous set $[-c,c]$, see \cite{Dg2010}. 
\\
Some researchers also undertook the study of the multidimensional extensions of the telegraph process. Among the most remarkable papers we recall \cite{OK1996}, where the author present a planar motion with orthogonal directions, \cite{KO2005}, regarding the planar random motion with infinite possible directions, and \cite{ODg2007} regarding random motions with finite speed in spaces of dimension $n\ge 3$.
\\

The most relevant feature of the random motions with finite velocity is that they preserve the natural property of moving with finite speed along the same direction, therefore they are suitable to describe real motions and they emerge in several fields. For instance, in finance they can model the stock prices or the volatility of financial markets, see \cite{KR2013}, \cite{R2007}, while in ecology, they model the displacements of wild animals on the soil, see \cite{H1994}. Hence, finite speed random processes represent a realistic alternative to the widely used diffusion processes.
\\

In this paper we will focus on the symmetric telegraph process (\ref{INTROdefinizioneProcessoTelegrafo}) which has the following conditional distributions. Let $x\in (-ct,ct)$ and $v =\pm c$
\begin{equation}\label{pt+-p}  
P \lbrace \mathcal{T}(t) \in \dif x \ |\ V(0)=v,\ N(t) = 2k \rbrace =\frac{(2k)!}{k!(k-1)!} \frac{(c^2t^2-x^2)^{k-1}(ct+sign(v)x)}{(2ct)^{2k}}\dif x,
\end{equation}
for $k\in \mathbb{N}$, and
$$ P \lbrace \mathcal{T}(t) \in \dif x \ |\ V(0)=v,\ N(t) = 2k+1 \rbrace = P \lbrace \mathcal{T}(t) \in \dif x\ |\ N(t) = 2k+1 \rbrace $$
\begin{equation}\label{ptd} 
=P \lbrace \mathcal{T}(t) \in \dif x\ |\ N(t) = 2k+2\rbrace =\frac{(2k+1)!}{k!^2} \frac{(c^2t^2-x^2)^k}{(2ct)^{2k+1}}\dif x,
\end{equation}
for $k\in \mathbb{N}_0$, see \cite{CO2020} and \cite{DgOS2005} for the proof of (\ref{pt+-p}) and (\ref{ptd}).

We also need to recall the fascinating result of \cite{CO2020} concerning the conditional distribution of the maximum of the telegraph process, for $n\in \mathbb{N},\ \beta \in (0,ct)$,
\begin{equation}\label{leggemax+d2}
P\Big\lbrace \max_{0\le s\le t} \mathcal{T}(s) \in \dif \beta\ \Big|\ V(0) = c,\ N(t) = n\Big\rbrace  =2 P \lbrace \mathcal{T}(t) \in \dif \beta \ |\ N(t) = n\rbrace.
\end{equation}

The main result of this paper has been inspired by (\ref{leggemax+d2}) and it concerns the existence of a one-to-one correspondence between the two following sets of trajectories, for $\beta \in [0,ct)$, $x\in (2\beta -ct, \beta]$ and natural $n$,
\begin{equation}\label{INTROPRNinsiememax+}
 \big\lbrace \omega\in \Omega\ :s\mapsto X(\omega, s)\ s.t.\ V(\omega, 0) = +c,\ N(\omega, t) = n,\ \max_{0\le s\le t}X(\omega, s) >\beta,\  X(\omega, t)= x \big\rbrace,
\end{equation}
\begin{equation}\label{INTROPRNinsieme-}
\big\lbrace \omega\in \Omega\ :s\mapsto X(\omega, s)\ s.t.\  V(\omega, 0) = -c,\ N(\omega, t) = n,\ X(\omega, t)= 2\beta-x \big\rbrace,
\end{equation}
where $\{X(t)\}_{t\ge0}$ is a finite constant velocity random motion (meaning the it moves with velocities $\pm c$) whose changes of direction are governed by a point process $\{N(t)\}_{t\ge0}$. We call this property \textit{negative reflection principle}. This resembles the reflection principle, but the trajectories have opposite starting velocities. This means that the trajectories in the first set concerns a motion starting with the positive speed $c$, while the samples in the other set start with the negative velocity $-c$. In Section \ref{sezionePRN} we prove the existence of a bijective relationship between sets (\ref{INTROPRNinsiememax+}) and (\ref{INTROPRNinsieme-}) and we give a graphical method to \textit{negatively reflect} the samples.

In the third section we study the connection between the telegraph process at time $t>0$ and its maximum in the time interval $[0,t]$, $\displaystyle M(t)\coloneqq\max_{0\le s\le t}\mathcal{T}(s)$, conditionally on the initial velocity and the number of changes of direction occurred up to time $t$. 
Throughout the work, we use the following notation for the conditional probability measure, for integer $n\ge0$ and real $t>0$
\begin{equation}\label{notazionemassimo&telegrafo}
P_{n}^\pm\{\ \cdot\ \} \coloneqq P\{\ \cdot\ |\ V(0) = \pm c,\ N(t) = n\}.
\end{equation}
We provide a specific proof of the relationship, for $\beta \in (0, ct)$ and $x \in (2\beta -ct, \beta]$,
\begin{equation}\label{INTROprincipioRiflessioneNegativoTelegrafo}
P_n^+\lbrace M(t) > \beta, \mathcal{T}(t) \in \dif x \rbrace =P_n^-\lbrace \mathcal{T}(t) \in 2\beta-\dif x \rbrace
\end{equation}
with $n\in \mathbb{N}$. We define this formula the \textit{negative reflection principle for the telegraph process}. Again, the important difference with the classical reflection principle is the inversion of the initial velocity.

From the results regarding the positive initial velocity, we obtain the conditional distributions of the motion starting with negative initial velocity. In this case we must consider the possibility that the particle does not overcome the level $x = 0$ in the time interval $[0,t]$. The conditional probability mass of this event has a cyclic behavior and it has been proved in \cite{CO2020}. It reads, for $k\in \mathbb{N}$
\begin{equation*}\label{INTROmassimoSingolarita0}
P\big\{\max_{0\le s\le t}\mathcal{T}(s) = 0\, |\, V(0) = -c,\, N(t) = 2k-1\big\} = \binom{2k}{k}\frac{1}{2^{2k}}=  P\big\{\max_{0\le s\le t}\mathcal{T}(s) = 0\, |\, V(0) = -c,\, N(t) = 2k\big\}.
\end{equation*}
We recall the incredible fact that the previous probability mass is independent of both the current time $t$ and the velocity $c$. By joining the position of the particle at time $t$, $\mathcal{T}(t)$, the probabilities lose this characteristic as well as their cyclic behavior, but we can establish some other interesting relationships. Finally, we exhibit the joint distribution of the telegraph process and its maximum conditioned on a negative starting velocity only.

We point out that the interested reader can easily obtain some other important conditional and unconditional distributions by means of the presented results. For instance, we can immediately derive the conditional distribution of the maximum of the telegraph bridge as well as the conditional distribution of the motion in presence of an absorbing barrier; we recall \cite{O1995} and the recent works \cite{DcMZ2018} and \cite{DcMPZ2020} concerning some further kinds of barriers.

In the last section we focus on the hitting times of the symmetric telegraph process. We describe a new method to study the distribution of the first passage time across a level $\beta>0, \ F_\beta$. The results concerning this random time appeared for the first time in \cite{F1992} and \cite{FK1994}. The authors used the Darlin-Siegert's relationship to prove the distributions of the first passage time conditioned on the starting velocity only. The first passage time of the telegraph process has been investigated in the papers \cite{LR2014}, \cite{M2018}, \cite{SZ2004} and \cite{Z2004}, also for some generalization of the telegraph process. Here, we provide some results on the distribution of $F_\beta$ conditioned on the number of reversals in a time interval $[0,t],\ t>0$, and the starting speed of the motion. We achieve these results by means of the conditional distributions of the maximum of the telegraph process presented in \cite{CO2020}. Finally, we study the conditional and unconditional distribution of the returning time to the origin.

It is interesting to observe that the telegraph process is related to a wider class of processes, the stochastic fluid models. We reference \cite{BOT2005} for some general results for the first passage time, the returning time to the origin and other random times for these models.

%%%%%%%%%%%%%%%%%% SEZIONE 2 %%%%%%%%%%%%%%%%

\section{The negative reflection principle}\label{sezionePRN}

We begin by considering the positively initially oriented telegraph process and we present the intuition for the negative reflection principle of the telegraph process, which is the starting point of the whole study.

By considering the fundamental relationship (\ref{leggemax+d2}), we can write
\begin{align*}
P_n^+\lbrace \mathcal{T}(t) > \beta \rbrace + P_n^-\lbrace \mathcal{T}(t)> \beta\rbrace &= 2P\lbrace \mathcal{T}(t)> \beta\ |\ N(t) = n\rbrace = P_n^+\lbrace M(t) > \beta \rbrace\\
& = P_n^+\lbrace M(t) > \beta,\  \mathcal{T}(t) > \beta \rbrace + P_n^+\lbrace M(t) > \beta,  \mathcal{T}(t)\le \beta \rbrace,
\end{align*}
for $n \in \mathbb{N},\ \beta \in [0,ct)$, thus
\begin{equation}\label{relazioneMassimo+}
P_n^+\lbrace M(t) > \beta,\  \mathcal{T}(t)\le \beta \rbrace = P_n^-\lbrace \mathcal{T}(t)> \beta\rbrace.
\end{equation}
By conditioning on $N(t) = n$, each path of the telegraph process is uniquely determined by the Poisson times $T_1, ..., T_n$. Their joint random variable is uniformly distributed on the simplex since $N(t)$ is a homogeneous Poisson process. This means that each trajectory of the process has the same probability (density). Thus relation (\ref{relazioneMassimo+}) suggests that for each trajectory of the set
on the right-hand side there exits a trajectory in the set on the left-hand side. With this at hand, it is reasonable to study if there exists a one-to-one correspondence between the following sets of trajectories of the telegraph process,, for $n \in \mathbb{N}$ and $\beta \in [0,ct)$
\begin{equation}\label{insiememax+}
\lbrace \omega\in \Omega\ :\ V(\omega, 0) = +c,\ N(\omega, t) = n,\max_{0\le s\le t} \mathcal{T}(\omega, s) >\beta,\  \mathcal{T}(\omega, t)\le \beta \rbrace,
\end{equation}
\begin{equation}\label{insieme-}
\lbrace \omega\in \Omega\ :\  V(\omega, 0) = -c,\ N(\omega, t) = n,\ \mathcal{T}(\omega, t)> \beta \rbrace,
\end{equation}
with $\bigl(\Omega, \mathcal{F}, P\bigr)$ probability space.
\\

In Theorem \ref{teoremaPRNtelegrafo} we generalize result (\ref{relazioneMassimo+}), by proving that for $\beta \in [0, ct)$ and $x \in (2\beta -ct, \beta]$
\begin{equation}\label{principioRiflessioneNegativoTelegrafo}
P_n^+\lbrace M(t) > \beta, \mathcal{T}(t) \in \dif x \rbrace = P_n^-\lbrace \mathcal{T}(t) \in 2\beta-\dif x \rbrace
\end{equation}
with natural $n\in \mathbb{N}$. We refer to this formula as the \textit{negative reflection principle for the telegraph process}. The adjective \textit{negative} is due to the inversion of the starting speed.  We point out that the classical reflection principle, regarding Wiener process, does not take into account the initial speed of the motion, in fact Brownian motion has both infinite velocity and infinite rate of changes of direction. 
\\

Here, provide a proof of the negative reflection principle for a wide class of finite speed random motions, where the symmetric telegraph process is just a particular case.

\begin{thm}[Negative reflection principle]\label{principioRiflessioneNegativo}
Let $\bigl(\Omega, \mathcal{F}, P\bigr)$ be a probability space. Let $\lbrace X(t) \rbrace_{t\ge 0}$ be a finite velocity random motion such that
\begin{equation}
X(t) = V_0 \int_0^t (-1)^{N(s)}\dif s
\end{equation}
where $V_0\in \{\pm c\}\ a.s., \ c>0$, and $\{N(t)\}_{t\ge 0}$ is an independent point process whose waiting times have a common support $(0,+\infty)$.
\\Let $\beta \in [0,ct)$ and $x\in (2\beta -ct, \beta]$. There exists a bijective relationship between the two following sets of trajectories of the process $X(t)$
\begin{equation}\label{PRNinsiememax+}
\displaystyle\mathcal{P}_n^+ = \lbrace \omega\in \Omega\ : s\mapsto X(\omega, s)\ s.t.\ V_0(\omega) = +c,\ N(\omega, t) = n,\ \max_{0\le s\le t}X(\omega, s) >\beta,\  X(\omega, t)= x \rbrace
\end{equation}
and
\begin{equation}\label{PRNinsieme-}
\mathcal{P}_n^- =\lbrace \omega\in \Omega\ :s\mapsto X(\omega, s)\ s.t.\  V_0(\omega) = -c,\ N(\omega, t) = n,\ X(\omega, t)= 2\beta-x \rbrace
\end{equation}
for $n \in \mathbb{N}$.
\end{thm}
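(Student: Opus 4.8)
The plan is to exhibit an explicit bijection at the level of the reversal instants and to close the combinatorics by induction on $n$ (the ``induction principle''). After conditioning on $N(t)=n$, a trajectory of either motion is completely determined by the ordered reversal times $0<t_1<\dots<t_n<t$ together with the sign of $V_0$; writing $\ell_1=t_1$, $\ell_i=t_i-t_{i-1}$ for $2\le i\le n$, and $\ell_{n+1}=t-t_n$ for the segment lengths, a motion started with $V_0=+c$ satisfies $X(t)=c\sum_{i=1}^{n+1}(-1)^{i-1}\ell_i$. Constructing a map $\mathcal P_n^+\to\mathcal P_n^-$ therefore amounts to prescribing a new admissible family of reversal times for a motion started with $-c$. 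The computational engine of the whole proof is the elementary observation that reversing $V_0$ to $-c$ and \emph{simultaneously} replacing $t_1$ by $t_1-\beta/c$ (leaving $t_2,\dots,t_n$ untouched) turns the alternating sum above into $2\beta-X(t)$; equivalently, it sends the endpoint $x$ to $2\beta-x$ while preserving the reversal count $n$.

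I would first treat the \emph{principal case} in which $\beta$ is reached already along the first rising segment, i.e.\ $ct_1>\beta$ (so that the first-passage time is $\tau=\beta/c<t_1$, with no reversal before it). Here the map $(t_1,\dots,t_n)\mapsto(t_1-\beta/c,t_2,\dots,t_n)$ is directly admissible: one has $t_1-\beta/c>0$ and the order is preserved, the starting velocity is $-c$, the number of reversals is still $n$, and by the identity above the terminal value is exactly $2\beta-x$, so the image lies in $\mathcal P_n^-$. Being a translation in the $t_1$-coordinate this map has unit Jacobian; for the symmetric telegraph process, where the reversal times are uniform on the simplex, it is therefore measure preserving and in passing reproduces the density identity (\ref{principioRiflessioneNegativoTelegrafo}) in this range, and its integrated form (\ref{relazioneMassimo+}).

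The remaining case is $ct_1\le\beta$, where $\beta$ is first crossed only after one or more reversals. Here I would peel off the first segment: on $[t_1,t]$ the motion starts at height $a=ct_1\le\beta$ with velocity $-c$, carries $n-1$ reversals, still attains a maximum exceeding $\beta$, and ends at $x$; recentring by $-a$ yields a motion started with $-c$, over horizon $t-t_1$, that overshoots the reduced level $\beta-a\ge0$ with $n-1$ reversals. To recurse on this object one needs the companion statement for motions started with $-c$, so I would set up the induction on $n$ by proving both orientations simultaneously (the $+c$ and the $-c$ versions of the reflection), the non-principal branch of each feeding the principal branch of the other with a strictly smaller reversal count. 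Reattaching the untouched first segment then produces the required element of $\mathcal P_n^-$.

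Finally I would build the inverse by the symmetric recipe read from the $-c$ side (peel the first falling segment, or in the principal case shift the first reversal \emph{forward} by $\beta/c$) and verify that the two compositions are the identity. The step I expect to be the main obstacle is precisely the asymmetry between the two sets: $\mathcal P_n^+$ carries the constraint $\max_{0\le s\le t}X(s)>\beta$, whereas $\mathcal P_n^-$ carries none. Consequently surjectivity is not automatic — one must show that \emph{every} $-c$-path terminating at $2\beta-x\ge\beta$ arises as an image, i.e.\ that the inverse construction always returns a legitimately ordered $+c$-path whose maximum does exceed $\beta$. This is where the hypothesis $x\le\beta$ (hence $2\beta-x\ge\beta$) is essential, and where the case split must be handled with care: a naive ``shift the first reversal forward by $\beta/c$'' can violate the ordering $t_1<t_2$ exactly for those $-c$-paths that must instead be produced through the recursive branch. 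Checking that the two branches partition both sets, with the reversal count preserved throughout, is the delicate part of the bookkeeping.
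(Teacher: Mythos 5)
Your principal case is sound: when $ct_1>\beta$ the map that flips $V_0$ and sends $(t_1,t_2,\dots,t_n)$ to $(t_1-\beta/c,t_2,\dots,t_n)$ is well defined, order-preserving, produces the endpoint $2\beta-x$, and bijects onto those paths of $\mathcal{P}_n^-$ whose first two reversal times satisfy $s_2-s_1>\beta/c$. The gap is the other branch, and it is twofold. First, the companion statement you want to induct with (for a $-c$-started motion whose maximum exceeds $\gamma\ge0$ and which ends at $y\le\gamma$, pair it with a $+c$-started motion ending at $2\gamma-y$) is not available. Your induction has to operate at the level of the conditional laws (conditionally on the number of reversals the paths are uniform on the simplex of reversal times, which is exactly what lets you pass from bijections to densities), and at that level the pairing you need would force $P^-_{m}\{M(t)>\gamma,\,\mathcal{T}(t)\in \dif y\}=P^+_{m}\{\mathcal{T}(t)\in 2\gamma-\dif y\}$. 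The paper's Section 3.2 shows this is wrong: for $m=2k$ one has $P^-_{2k}\{M(t)>\gamma,\,\mathcal{T}(t)\in \dif y\}=P^-_{2k}\{\mathcal{T}(t)\in 2\gamma-\dif y\}$, i.e.\ a \emph{classical} reflection in which the velocity is not flipped (see (\ref{leggemax-p}) and (\ref{met-pripb})), and $P^-_{2k}\ne P^+_{2k}$ by (\ref{pt+-p}); for $m=2k+1$, formula (\ref{met-dripb}) shows the mass of $\{M(t)>\gamma,\,\mathcal{T}(t)\in\dif y\}$ is not a telegraph density at $2\gamma-y$ for either initial velocity. So the hypothesis you plan to carry through the induction ("the $-c$ version of the reflection") is false in the only form that would close your recursion.

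Second, independently of the companion, the reattachment step cannot work: if the first (rising) segment is kept untouched at the beginning, the rebuilt path necessarily starts with velocity $+c$, hence it is never an element of $\mathcal{P}_n^-=\{V_0=-c,\ N(t)=n,\ X(t)=2\beta-x\}$. Concretely, if the recursed remainder starts with $+c$, no reversal is created at the junction $t_1$ and you obtain a $+c$-started path with $n-1$ reversals; if it starts with $-c$, you obtain a $+c$-started path with $n$ reversals; in neither case do you get the required initial velocity. This is precisely the obstruction the paper's proof is built around: instead of recursing, it cuts the $+c$-path at its first two passage times through $\beta$ (both exist a.s.\ because the maximum exceeds $\beta$ while $X(t)=x\le\beta$), places the reflection of the excursion between them \emph{first} (so the image starts with $-c$), relocates the original initial piece $[0,F^{(1)}_\beta]$ immediately after it, and reflects the tail $[F^{(2)}_\beta,t]$; the velocities match at both junctions, so the reversal count $n$ is preserved, and bijectivity is verified through the bookkeeping $(h,l)\mapsto(l-h+1,l)$ on the displacements containing the two crossings. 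To salvage your approach you must, as the paper does, move the initial sub-$\beta$ piece into the interior of the path; once that is done, the construction handles your two cases simultaneously and no induction on $n$ is needed.
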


The reader can notice that this bijective relationship holds even if we remove the condition on the number of changes of direction $N(t) = n$ from both the sets in (\ref{PRNinsiememax+}) and (\ref{PRNinsieme-}).

The process $X(t)$ is a non-jumping stochastic motion that moves with two alternating velocities $\pm c$ and starts at $x=0$ at time $t=0$ with an initial speed randomly chosen between the two possible ones. Its changes of direction are governed by the point process $N(t)$.
\\Let $\omega^+\in \mathcal{P}_n^+$ and $f:\mathcal{P}_n^+\longrightarrow \mathcal{P}_n^-$ being the bijective application between sets (\ref{PRNinsiememax+}) and (\ref{PRNinsieme-}). We call $ \omega^-=f(\omega^+) \in \mathcal{P}_n^-$ the \textit{negatively reflected} sample of $\omega^+$ and vice versa.

\begin{proof}
First of all we point out that we require $\beta \in [0,ct),\ x \in (2\beta -ct, \beta]$ otherwise the sets are both negligible.

\begin{figure}
\begin{minipage}{0.5\textwidth}
\centering
\begin{tikzpicture}[scale = 0.71]
% elementi sugli assi
\draw [lightgray] (7.3,2.6) -- (0,2.6) node[left, black, scale = 1]{$\beta$}; \draw[dashed, lightgray] (7.3, 2.6) -- (8.4,2.6);
% traiettoria e punti
\draw (0,0) -- (0.7, 1.12) -- (1.8, -0.64) -- (4.3,3.36) -- (5.4,1.6) -- (6.3,3.04) -- (7.4, 1.28);
\draw[dashed, gray] (3.825,2.6) -- (3.825,0) node[below, black, scale = 1]{$t_1$};
\filldraw[blue] (3.825,2.6) circle (3pt) node[above left, black, scale = 0.9]{\textbf{A}};
\draw[dashed, gray] (4.775,2.6) -- (4.775,0) node[below, black, scale = 1]{$t_2$};
\filldraw[blue] (4.775,2.6) circle (3pt) node[above right, black, scale = 0.9]{\textbf{B}};
\draw[dashed, gray] (7.4, 1.28) -- (7.4,0) node[below, black, scale = 1]{$t$};
\draw[dashed, gray] (7.4, 1.28) -- (0,1.28) node[left, black, scale = 1]{$x$};
\filldraw[blue]  (7.4, 1.28) circle (3pt) node[right, black, scale = 0.9]{\textbf{C}};
% Assi
\draw[->, thick] (-1,0) -- (8.4,0) node[below, scale = 1]{$\pmb{s}$};
\draw[->, thick] (0,-1.4) -- (0,5) node[left, scale = 1]{ $\pmb{X(s)}$};
\filldraw[blue] (0,0) circle (3.5pt) node[below left, black, scale =0.9]{\textbf{O}};
\end{tikzpicture}
\caption{Sample path $\omega^+ \in \mathcal{P}_5^+$,  \hspace{1cm}\protect \linebreak with $n=5$.}\label{PRN_g1}
\end{minipage}\hfill
% seconda immagine
\begin{minipage}{0.5\textwidth}
\centering
\begin{tikzpicture}[scale = 0.71]
% elementi sugli assi
\draw [lightgray] (7.3,2.6) -- (0,2.6) node[left, black, scale = 1]{$\beta$}; \draw[dashed, lightgray] (7.3, 2.6) -- (8.4,2.6);
% traiettoria e punti
\draw[lightgray, very thin] (0,0) -- (0.7, 1.12) -- (1.8, -0.64) -- (4.3,3.36) -- (5.4,1.6) -- (6.3,3.04) -- (7.4, 1.28);
\draw (0,0) -- (0.475,-0.76)-- (1.65, 1.12) -- (2.75, -0.64)  -- (5.4,3.6) -- (6.3,2.16) -- (7.4, 3.92);
\filldraw[blue] (0.95,0) circle (3pt) node[above, black, scale = 0.9]{\textbf{O'} $\ $};
\draw[blue] (0.95,0) node[below, black, scale = 0.9]{$\ \ \ \ t_2-t_1$};
\draw[dashed, gray] (4.775,2.6) -- (4.775,0) node[below, black, scale = 1]{$t_2$};
\filldraw[blue] (4.775,2.6) circle (3pt) node[above, black, scale = 0.9]{\textbf{B}};
\draw[dashed, gray] (7.4, 3.92) -- (7.4,0) node[below, black, scale = 1]{$t$};
\draw[dashed, gray] (7.4, 3.92) -- (0,3.92) node[left, black, scale = 1]{$2\beta-x$};
\filldraw[blue] (7.4, 3.92) circle (3pt) node[right, black, scale = 0.9]{\textbf{C'}};
% Assi
\draw[->, thick] (-1,0) -- (8.4,0) node[below, scale = 1]{$\pmb{s}$};
\draw[->, thick] (0,-1.4) -- (0,5) node[left, scale = 1]{ $\pmb{X(s)}$};
\filldraw[blue] (0,0) circle (3.5pt) node[below left, black, scale =0.9]{\textbf{A'}};
\end{tikzpicture}
\caption{$\omega^- \in \mathcal{P}_5^-$, the negatively reflected sample of $\omega^+$.}\label{PRN_g2}
   \end{minipage}
\end{figure}

Let $\omega^+ \in \mathcal{P}_n^+$, then $\exists\ a.s.\ 0< F_\beta^{(1)}(\omega^+)=t_1< F_\beta^{(2)}(\omega^+)=t_2<t$, respectively the first and the second passage time across level $\beta$ of the sample $\{x\, :\, x = X(\omega^+, s),\, s\in [0,t] \}$. The negatively reflected trajectory of $\omega^+$ is graphically obtained as follows (consider Figures \ref{PRN_g1} and \ref{PRN_g2}):
\begin{itemize}
\item[1.] reflect the path from time $t_1$ to time $t_2$ across level $\beta$ (see the polygonal curve from $A$ to $B$ in Figure \ref{PRN_g1});
\item[2.] translate the reflected poly-line in point 1 to the origin of the axes, so it now starts at $(0,0)$ and ends at the point $(t_2-t_1, 0)$ (see the sample from $A'$ to $O'$ in Figure \ref{PRN_g2});
\item[3.] consider the original path in the time interval $[0,t_1]$ and translate it horizontally by the vector $(t_2-t_1,0)$ so it starts at the point $(t_2-t_1, 0)$, that is where the poly-line in steps 1-2 ends, and it ends at $(t_2,\beta)$ (see the poly-lines from $O'$ to $B$ in Figure \ref{PRN_g2});
\item[4.] reflect the (original) path in the time interval $[t_2,t]$ across level $\beta$ (see the broken line from $B$ to $C$ in Figure \ref{PRN_g1} and its counterpart from $B$ to $C'$ in Figure \ref{PRN_g2}).
\end{itemize}
This procedure produces one and only one negatively reflected sample of $\omega^+ \in\mathcal{P}_n^+$. By applying the method in the reverse way, we can build one and only one trajectory in $\mathcal{P}_n^+$ starting from an element $\omega^- \in \mathcal{P}_n^-$. Notice that in $\omega^-$, $t_2-t_1$ represents the first crossing time through level 0 and $t_2$ represents the first crossing time through $\beta$.
Figures \ref{PRN_g1} and \ref{PRN_g2} describe respectively a sample path in $\mathcal{P}_n^+$ and its negatively reflected counterpart in $\mathcal{P}_n^-$, in the case $n=5$.
\\

We now show analytically that the procedure above is bijective.
\\Let integer $n\ge 1$, every trajectory $\omega \in \Omega$ of the process $X(\omega, t)$ can be described by the position of the motion at the $n$ arrival times, $0<T_1(\omega)=t_1 < ... < T_n(\omega)=t_n<t$ of the process $N(t)$. Let $\omega^+ \in \mathcal{P}_n^+$ and let $F_\beta^{(1)}(\omega^+) = t_\beta^{(1)},F_\beta^{(2)}(\omega^+)=t_\beta^{(2)}$, respectively the first and the second passage time across level $\beta$ in the sample $\omega^+$. Let us assume $t_{h-1}<t_\beta^{(1)}< t_{h}$ and $t_{l-1}<t_\beta^{(2)}<t_l$, with $1\le h<l\le n$, meaning that the sample $\omega^+$ crosses $\beta$ for the first time along the $h$-th displacement and it crosses $\beta$ for the second time along the $l$-th displacement. We can uniquely describe $\omega^+$ by means of a $(n+1)$-dimensional vector $v^+ = \phi(\omega^+) = \Bigl(X(\omega^+,t_{i}) = x_{i} : i=1,...,n+1\Bigr)$, where $ t_{n+1} = t$ (we do not strictly need the $(n+1)$-th coordinate since it can be expressed as an affine combination of the other elements, but we maintain it for the sake of clarity). By applying the constructive method above, we obtain the negatively reflected sample $\omega^-\in \mathcal{P}_n^-$ and we can uniquely describe it by means of $v^-=\phi(\omega^-)$. Vectors $v^+, v^-$ are respectively:
\begin{equation}\label{vettori}
v^+ = \begin{pmatrix}
\begin{rcases}
\begin{array}{l}
0<x_1\\
\ \ \ \ \ \ \cdot\\
\ \ \ x_{h-1}
\end{array}
\end{rcases} < \beta \\
\begin{rcases}
\begin{array}{l}
\ \ \ \ x_{h}\\
\ \ \ \ \ \ \cdot\\
\ \ \ x_{l-1}
\end{array}
\end{rcases} > \beta \\
x_{l} <\beta \\\cdot\\\cdot\\x_{n+1}=x <\beta
\end{pmatrix},\ \ \ \ \ \ \
v^- = \begin{pmatrix}
\begin{rcases}
\begin{array}{l}
\ \ \ y_1 = \beta-x_h \\
\ \ \ \ \ \ \ \ \ \ \ \cdot\\
y_{i} = \beta - x_{h-1+i}\\
\ \ \ \ \ \ \ \ \ \ \ \cdot\\
y_{l-h} = \beta -x_{l-1}
\end{array}
\end{rcases} < 0\\
\begin{rcases}
\begin{array}{l}
0<y_{l-h+1} = x_1\\
\ \ \ \ \ \ \ \ \ \ \ \cdot \\
\ \ \ \ \ y_{l-h+j} = x_j\\
\ \ \ \ \ \ \ \ \ \ \ \cdot \\
\ \ \ \ \ y_{l-1} = x_{h-1}
\end{array}
\end{rcases} <\beta\\
y_l = 2\beta -x_{l} >\beta \\\cdot\\\cdot\\y_{n+1} = 2\beta - x> \beta
\end{pmatrix}
\end{equation}
where the elements $y_1,\dots,y_{l-h}$ are consequences of points 1-2 in the building procedure (which work on $x_h,\dots, x_{l-1}$), elements $y_{l-h+1},\dots,y_{l-1}$ are consequences of point 3 (which work on $x_1,\dots, x_{h-1}$) and $y_l,\dots,y_{n+1}$ are consequences of point 4 (which work on $x_l,\dots, x_{n+1}$). Hence, we have the following bijective affine relationship between $v^+$ and $v^-$
\begin{equation}
v^- = f(v^+)= 
\begin{pmatrix}
0\  & \  -I_{l-h}\  &\  0 \\
I_{h-1}\ &\ 0\ & \ 0 \\
0\ &\ 0 \ &\ -I_{n-l+2}
\end{pmatrix}
v^+ +
\begin{pmatrix}
\beta_{l-h}\\
0_{h-1}\\
2\cdot\beta_{n-l+2}
\end{pmatrix}
\end{equation}
where $I_k$ is the $k\times k$ identity matrix, $\beta_k$ and $0_k$ are $k$-dimensional vectors of all $\beta$s and $0$s respectively.
\\It is important to notice that the sample $\omega^+\in \mathcal{P}_n^+$ was characterized by the pair $(h,l)$ which identifies the displacements of the path where the motion crosses level $\beta$ for the first and the second time respectively. By observing the vector representation $v^-$ in (\ref{vettori}) of the negatively reflected path of $\omega^+$, $\omega^- \in\mathcal{P}_n^-$, we have that this trajectory crosses level $0$ for the first time along its $(l-h+1)$-th displacement and crosses level $\beta$ for the first time along its $l$-th displacement, then we can characterize it by means of the pair $(l-h+1,l)$. Now we notice that any sample $\omega^- \in\mathcal{P}_n^-$ with pair $(l-h+1,l),\ 1\le h<l\le n$, can be uniquely expressed in a vector form $v^-=\phi(\omega^-)$ as in (\ref{vettori}) and we can obtain its negatively reflected path $\omega^+ \in\mathcal{P}_n^+$, with pair $(h,l)$, by means of $f^{-1}$. Hence, we have proved the existence of a bijective relationship between the two following sets
\begin{equation}
\mathcal{P}_n^+ \cap \{\omega\in \Omega\ :\ T_{h-1}(\omega)<F_\beta^{(1)}(\omega)<T_{h}(\omega)\ ,\ T_{l-1}(\omega)<F_\beta^{(2)}(\omega)<T_{l}(\omega)\}
\end{equation}
and 
\begin{equation}
\mathcal{P}_n^- \cap \{\omega\in \Omega\ :\ T_{l-h}(\omega)<F_{0}^{(1)}(\omega)<T_{l-h+1}(\omega)\ ,\ T_{l-1}(\omega)<F_\beta^{(1)}(\omega)<T_{l}(\omega)\}
\end{equation}
where $F_0^{(1)}$ is the first returning time to $0$.
Finally, we simply observe that the function $g(h,l) = (l-h+1, l)$ is an automorphism. Thus, any negatively reflected trajectory of a sample $\omega \in \mathcal{P}_n^+$, with  pair $(h,l)$, is different from the negatively reflected path of a sample $\omega' \in \mathcal{P}_n^+$ with pair $(h',l') \not = (h,l)$. This concludes the proof of the theorem.
\end{proof}

We point out that in building a transformation between trajectories of a finite velocity stochastic motion, it is important not to create new constraints on the paths. For instance, to force a change of direction at a specific level is a serious mistake.
\\

It is important to keep in mind that the negative reflection principle presented in Theorem \ref{principioRiflessioneNegativo} does not consider probabilities, it regards trajectories only. Result (\ref{principioRiflessioneNegativoTelegrafo}), that we prove in Theorem \ref{teoremaPRNtelegrafo}, regarding the probabilities, is due to the uniform distribution of the arrival times in the case of the homogeneous Poisson process which implies that the trajectories have all the same probability (density).

The negative reflection principle holds for both continuous and discrete processes (it is sufficient to suitably adapt the point process $N(t)\,$ and consider $X(t)$ at the desired times only). For instance, the reader can prove that it holds in the case of simple random walks. In particular, for a simple symmetric random walk we can obtain an equality in terms of the probabilities since the sample paths of a symmetric random walk have all the same probability.

\begin{ex}[Simple symmetric random walk]
Let $S_0 = 0\ a.s.$ and $S_n = S_0+\sum_{i=1}^n X_i$, with $X_i$ independent uniformly distributed random variables in $\lbrace -1, 1\rbrace$. Let $n\ge 2$, for $\beta \in \lbrace 0,1,...,n-2\rbrace$ and $x\in\lbrace 2(\beta+1) -n,...,\beta\rbrace$ we have that
\begin{align*}
P\lbrace \max_{0\le k \le n} S_k > \beta, S_n = x\ |\ S_1 = 1\rbrace &= P\lbrace \max_{0\le k \le n-1} S_k  > \beta-1, S_{n-1} = x-1 \rbrace \\
& = P\lbrace S_{n-1} = 2\beta-x+1\rbrace = P\lbrace S_n = 2\beta-x\ |\ S_1 = -1\rbrace,
\end{align*}
where in the second equality we used the known result on the maximum of a simple symmetric random walk, due to the classical reflection principle. The sets in the first and the fourth member respectively correspond to the sets (\ref{PRNinsiememax+}) and (\ref{PRNinsieme-}), with no condition on the point process $N(t)$, in the case of random walks.
\hfill $\diamond$
\end{ex}

\section{Joint distribution of the symmetric telegraph process and its maximum}

In this section we present the conditional and unconditional joint distributions of the position of the telegraph particle at time $t>0$ and its maximum in the time interval $[0,t]$. The results we present here, easily yield other conditional distributions of the symmetric telegraph process.

\subsection{Motion starting with positive velocity}

We now give a direct proof of the negative reflection principle for the telegraph process.

\begin{thm}[Negative reflection principle for the telegraph process]\label{teoremaPRNtelegrafo}
Let $\lbrace \mathcal{T}(t) \rbrace_{t\ge0}$ be a symmetric telegraph process. Let $n\in \mathbb{N}$ and $x \in (-ct,ct)$
\begin{equation}\label{principioRiflessioneNegativoTelegrafo1.1}
P_n^+\lbrace M(t) \le \beta,\ \mathcal{T}(t) \in \dif x \rbrace = \begin{cases}\begin{array}{l l}
0 &  if \ \beta <\max\{0,x\},\\
P_n^+\lbrace \mathcal{T}(t) \in \dif x \rbrace -P_n^-\lbrace \mathcal{T}(t) \in 2\beta-\dif x \rbrace &  if \ \max\{0,x\}\le\beta<\frac{ct+x}{2},\\
P_n^+\lbrace \mathcal{T}(t) \in \dif x \rbrace &  if \ \beta \ge\frac{ct+x}{2}.
\end{array}\end{cases}
\end{equation}
\end{thm}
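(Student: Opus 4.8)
The plan is to prove the identity by conditioning on the value of $M(t)$ relative to $\beta$ and exploiting the negative reflection principle for the telegraph process, equation~(\ref{principioRiflessioneNegativoTelegrafo}), which was established in Theorem~\ref{teoremaPRNtelegrafo} as equation~(\ref{INTROprincipioRiflessioneNegativoTelegrafo}). The first step is to dispose of the degenerate ranges of $\beta$. If $\beta < \max\{0,x\}$, then either $\beta < 0$, in which case $\{M(t) \le \beta\}$ is impossible since the motion starts at the origin and $M(t) \ge 0$ always, or $\beta < x$, in which case the event $\{M(t) \le \beta,\ \mathcal{T}(t) \in \dif x\}$ requires the terminal value $x$ to exceed the running maximum, which is contradictory since $\mathcal{T}(t) \le M(t)$ a.s. Either way the probability is zero, giving the first line.

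For the remaining two cases I would use the elementary decomposition
\begin{equation*}
P_n^+\{M(t) \le \beta,\ \mathcal{T}(t) \in \dif x\} = P_n^+\{\mathcal{T}(t) \in \dif x\} - P_n^+\{M(t) > \beta,\ \mathcal{T}(t) \in \dif x\},
\end{equation*}
valid whenever $x \le \beta$, so that the conditioning event $\{\mathcal{T}(t)\in\dif x\}$ is compatible with $\{M(t)\le\beta\}$. The marginal $P_n^+\{\mathcal{T}(t)\in\dif x\}$ is known from~(\ref{pt+-p}) and~(\ref{ptd}). For the subtracted term I want to invoke the negative reflection principle~(\ref{principioRiflessioneNegativoTelegrafo}), which states $P_n^+\{M(t) > \beta, \mathcal{T}(t) \in \dif x\} = P_n^-\{\mathcal{T}(t) \in 2\beta - \dif x\}$, but that identity was proved only under the hypothesis $x \in (2\beta - ct, \beta]$, equivalently $\beta < \frac{ct+x}{2}$ together with $x \le \beta$. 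This is exactly the range $\max\{0,x\} \le \beta < \frac{ct+x}{2}$ of the middle case, so substituting yields the stated expression $P_n^+\{\mathcal{T}(t)\in\dif x\} - P_n^-\{\mathcal{T}(t)\in 2\beta - \dif x\}$.

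For the third case, $\beta \ge \frac{ct+x}{2}$, I would argue that the correction term vanishes. Here $2\beta - x \ge ct$, so the reflected target point $2\beta - x$ lies at or beyond the maximal reachable level $ct$ of the motion; equivalently, crossing $\beta$ and returning to terminate at $x$ within time $t$ would demand a total traveled displacement exceeding $ct$, which is impossible for a motion of speed $c$. Thus $P_n^+\{M(t) > \beta,\ \mathcal{T}(t) \in \dif x\} = 0$ and the decomposition collapses to $P_n^+\{\mathcal{T}(t)\in\dif x\}$, matching the third line. I would make this rigorous by noting that reaching level $\beta$ by some time $s \le t$ and then returning to $x$ by time $t$ forces $\beta + (\beta - x) \le ct$, i.e. $\beta \le \frac{ct+x}{2}$, so the event is null precisely on the complementary range.

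The main obstacle I anticipate is not the algebra but the careful handling of the boundary point $\beta = \frac{ct+x}{2}$ and the matching of open/closed endpoints between the three regimes, together with confirming that the hypothesis $x \in (2\beta - ct,\beta]$ under which~(\ref{principioRiflessioneNegativoTelegrafo}) holds coincides exactly with the interior of the middle range. One must also verify that the three cases genuinely partition the admissible set $x \in (-ct, ct)$ for each fixed $\beta$, and that the apparent mismatch between the statement's hypothesis $x \in (-ct,ct)$ here versus the more restrictive $x \in (2\beta - ct, \beta]$ in~(\ref{principioRiflessioneNegativoTelegrafo}) is reconciled precisely by the case split itself, with the first and third lines absorbing the values of $x$ that fall outside the reflection principle's range.
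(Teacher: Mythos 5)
Your handling of the two degenerate ranges is fine: $\{M(t)\le\beta\}$ is null when $\beta<\max\{0,x\}$ because $M(t)\ge\max\{0,\mathcal{T}(t)\}$ a.s., and when $\beta\ge\frac{ct+x}{2}$ the event $\{M(t)>\beta,\ \mathcal{T}(t)\in\dif x\}$ would force a path of total variation exactly $ct$ to travel more than $2\beta-x\ge ct$, so the subtracted term vanishes. This matches the paper, which treats those cases as trivial (and verifies $n=1$ explicitly).

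The middle case, however --- which is the entire substance of the theorem --- is circular as written. The identity you invoke, $P_n^+\{M(t)>\beta,\ \mathcal{T}(t)\in\dif x\}=P_n^-\{\mathcal{T}(t)\in 2\beta-\dif x\}$, i.e.\ (\ref{INTROprincipioRiflessioneNegativoTelegrafo}) and (\ref{principioRiflessioneNegativoTelegrafo}), is not a previously established result: it is precisely what Theorem \ref{teoremaPRNtelegrafo} asserts, since your complement decomposition shows the middle case and that identity are the same statement rearranged. Indeed, you yourself cite it as ``established in Theorem \ref{teoremaPRNtelegrafo}'' --- the theorem being proved. The paper announces (\ref{principioRiflessioneNegativoTelegrafo}) in Section \ref{sezionePRN} but explicitly defers its proof to Theorem \ref{teoremaPRNtelegrafo}. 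What is actually available beforehand is weaker: Theorem \ref{principioRiflessioneNegativo} is a bijection between sets of \emph{trajectories} with no probability content, and relation (\ref{relazioneMassimo+}) is only the integrated version (aggregated over $\mathcal{T}(t)\le\beta$), which cannot be disintegrated into the pointwise $\dif x$ statement without further argument. To close the gap you must either (a) do what the paper does: prove the middle case directly by induction on $n$, with base cases $n=2$ and $n=3$ computed by integrating over the first Poisson arrival times, and an induction step that conditions on $T_1, T_2$, distinguishes whether the motion can still cross $\beta$, and evaluates the resulting integrals using the known conditional densities (\ref{pt+-p})--(\ref{ptd}); or (b) rigorously upgrade the path bijection of Theorem \ref{principioRiflessioneNegativo} to an equality of probabilities, by showing that, conditionally on $N(t)=n$, the arrival-time vector is uniform on the simplex and the bijection induces a measure-preserving (affine, unit-Jacobian) transformation between the two sets of trajectories --- a step the paper only sketches informally and deliberately replaces with the direct inductive computation.
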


\begin{proof}
When $N(t) =n=1$ distribution (\ref{principioRiflessioneNegativoTelegrafo1.1}) reduces to
\begin{equation}\label{principioRiflessioneNegativoCaso1}
P_1^+\lbrace M(t) \le \beta,\ \mathcal{T}(t) \in \dif x \rbrace = \begin{cases}\begin{array}{l l}
0 & \ if \ \beta <\frac{ct+x}{2},\\
P_1^+\lbrace \mathcal{T}(t) \in \dif x \rbrace=\displaystyle\frac{\dif x}{2ct} & \ if \ \beta \ge\frac{ct+x}{2},
\end{array}\end{cases}
\end{equation}
since the random variable $\mathcal{T}(t)$ is uniformly distributed in $(-ct,ct)$ if one Poisson event occurs in the time interval $[0,t]$, see (\ref{ptd}) with $k=0$. To prove (\ref{principioRiflessioneNegativoCaso1}) we observe that the process reaches level $\beta$ at time $\frac{\beta}{c}$, then it changes direction and it keeps moving with speed $-c$ until time $t$ where it will be located in $\beta - c(t-\frac{\beta}{c}) = 2\beta -ct = x$. Thus, $P_1^+\Big\lbrace M(t) =\frac{ct+\mathcal{T}(t)}{2},\  \mathcal{T}(t) \in \dif x \Big\rbrace = P_1^+\{ \mathcal{T}(t) \in \dif x\}$.

The first and third cases of (\ref{principioRiflessioneNegativoTelegrafo1.1}) are trivial for $n\ge2$. We focus on the second case, which can be written as follows, for $\beta \in [0, ct)$ and $x\in (2\beta-ct, \beta]$,
\begin{equation}\label{principioRiflessioneNegativoTelegrafo2}
P_n^+\lbrace M(t) \le \beta,\ \mathcal{T}(t) \in \dif x \rbrace = P_n^+\lbrace \mathcal{T}(t) \in \dif x \rbrace -P_n^-\lbrace \mathcal{T}(t) \in 2\beta-\dif x \rbrace.
\end{equation}
For $n=2$, denoting with $T_j$ the $j$-th Poisson arrival time, we consider that
\begin{equation*}\label{baseinduttiva2}
P^+_2\{ M(t) \le \beta,\ \mathcal{T}(t)\in \dif x\} =\int_0^{\frac{\beta}{c}} P\Big\{ T_1 \in  \dif t_1,\ T_2 \in \frac{ct-\dif x}{2c}+T_1\ |\ N(t) = 2\Big\} = \frac{\beta }{c^2t^2}\dif x
\end{equation*}
which coincides with result (\ref{principioRiflessioneNegativoTelegrafo2}) when $n=2$.

We now prove (\ref{principioRiflessioneNegativoTelegrafo2}) for all natural $n>2$ by means of an induction argument. The case above, $n=2$, represents the induction base for $n$ even.
Let natural $n\ge 3$, at time $T_2 = t_2<t$:

- Case 1. The motion has enough time to cross level $\beta$ and reach $x$ at time $t$, so if $\\c(t-t_2)\ge (\beta-2ct_1+ct_2)+(\beta-x) \iff t_2\le \frac{ct-2\beta+x}{2c}+t_1$.

- Case 2. The motion has not enough time to cross level $\beta$, but it has time to reach $x$ at time $t$, then if $t_2> \frac{ct-2\beta+x}{2c}+t_1$ and $c(t-t_2)\ge |x-2ct_1+ct_2|$.
\\Thus we can write the following recurrence relationship, for natural $n\ge3$
\begin{align}
&P_n^+\lbrace M(t) \le \beta, \mathcal{T}(t) \in \dif x \rbrace =\label{integrale_PRNinduzione1}\\
& = \int_0^{\frac{\beta}{c}}\int_{t_1}^{\frac{ct-2\beta+x}{2c}+t_1} P_{n-2}^+\lbrace M(t-t_2) \le \beta-2ct_1+ct_2, \mathcal{T}(t-t_2) \in \dif x-2ct_1+ct_2 \rbrace \nonumber\\
&\ \ \ \times P\{T_1 \in \dif t_1, T_2\in \dif t_2 | N(t) = n\} \nonumber\\
&\ \ \ +\int_0^{\frac{\beta}{c}}\int_{\frac{ct-2\beta+x}{2c}+t_1}^{\frac{ct-x}{2c}+t_1} P_{n-2}^+\lbrace \mathcal{T}(t-t_2) \in \dif x-2ct_1+ct_2 \rbrace  P\{T_1 \in \dif t_1,\ T_2\in \dif t_2\ |\ N(t) = n\}\nonumber
\end{align}
Let $k\in \mathbb{N}$, we assume that distribution (\ref{principioRiflessioneNegativoTelegrafo2}) holds for $n = 2k$ (induction hypothesis). Hence, for $n=2k+2$, formula (\ref{integrale_PRNinduzione1}) reads
\begin{align*}
P_{2k+2}^+&\lbrace M(t) \le \beta, \mathcal{T}(t) \in \dif x \rbrace \\
&=\int_0^{\frac{\beta}{c}}\int_{t_1}^{\frac{ct-x}{2c}+t_1} P_{2k}^+\lbrace \mathcal{T}(t-t_2) \in \dif x-2ct_1+ct_2 \rbrace  P\{T_1 \in \dif t_1,\ T_2\in \dif t_2\ |\ N(t) = 2k+2\}\\
&\ \ \ - \int_0^{\frac{\beta}{c}}\int_{t_1}^{\frac{ct-2\beta+x}{2c}+t_1} P_{2k}^-\lbrace \mathcal{T}(t-t_2) \in 2\beta -\dif x-2ct_1+ct_2 \rbrace P\{T_1 \in \dif t_1,\ T_2\in \dif t_2\ |\ N(t) = 2k+2\} \\
& = \dif x \int_0^{\frac{\beta}{c}} \dif t_1\int_{t_1}^{\frac{ct-x}{2c}+t_1} \frac{(ct-x+2ct_1-2ct_2)^{k-1}(ct+x-2ct_1)^{k}}{\bigl[2c(t-t_2)\bigr]^{2k}}\frac{(2k+2)!}{k!(k-1)!}\frac{(t-t_2)^{2k+2}}{t^{2k+2}}\dif t_2 \\
& \ \ \ -\dif x \frac{(2k+2)!}{k!(k-1)!}\int_0^{\frac{\beta}{c}} \dif t_1\int_{t_1}^{\frac{ct-2\beta+x}{2c}+t_1} \frac{(ct-2\beta+x+2ct_1-2ct_2)^{k}(ct+2\beta-x-2ct_1)^{k-1}}{(2c)^{2k}t^{2k+2}}\dif t_2\\
&= \frac{(2k+2)!}{(k+1)!\,k!} \frac{\dif x}{(2ct)^{2k+2}}\Bigl[ (c^2t^2-x^2)^k(ct+x) - \bigl[c^2t^2-(2\beta-x)^2\bigr]^k \bigl[ct-(2\beta-x) \bigr]\Bigr] \\
& = P_{2k+2}^+\lbrace \mathcal{T}(t) \in \dif x \rbrace -P_{2k+2}^-\lbrace \mathcal{T}(t) \in 2\beta-\dif x \rbrace
\end{align*}
which concludes the proof of the Theorem for $n$ even. 

In the case of $n$ odd the proof works in the same way and therefore it is omitted. We note only that in formula (\ref{integrale_PRNinduzione1}) for $n=3$, by considering (\ref{principioRiflessioneNegativoCaso1}), the first term is equal to $0$ and it is trivial to see that (\ref{principioRiflessioneNegativoTelegrafo2}) holds when $n=3$. This is the induction base in the case of $n$ odd.
\end{proof}

Thanks to the negative reflection principle for the telegraph process (\ref{principioRiflessioneNegativoTelegrafo2}), we easily achieve the joint density of the telegraph process and its maximum as we show in the next corollaries. 

\begin{cor}
Let $\lbrace \mathcal{T}(t) \rbrace_{t\ge0}$ be a symmetric telegraph process. Let $\beta \in (0, ct), \ x\in (2\beta-ct, \beta)$
\begin{equation}\label{met+d}
P_{2k+1}^+\lbrace M(t) \in \dif \beta,\ \mathcal{T}(t) \in \dif x \rbrace = \frac{(2k+1)!}{k!(k-1)!}\frac{(2\beta -x)\bigl(c^2t^2-(2\beta -x)^2 \bigr)^{k-1}}{2^{2k-1}(ct)^{2k+1}}\dif \beta \dif x,
\end{equation}
for $k \in \mathbb{N}$. For $|x|<ct$
\begin{equation}\label{met+1}
P_1^+\lbrace M(t) =\frac{ct+\mathcal{T}(t)}{2},\  \mathcal{T}(t) \in \dif x \rbrace =P_1^+\lbrace \mathcal{T}(t) \in \dif x \rbrace,
\end{equation}
or alternatively, for $\beta \in (0,ct)$
\begin{equation}\label{met+1_2}
P_1^+\lbrace M(t) \in \dif \beta,\  \mathcal{T}(t)=2M(t)-ct \rbrace =P_1^+\lbrace M(t) \in \dif \beta \rbrace.
\end{equation}
\end{cor}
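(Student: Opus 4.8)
The plan is to read the corollary directly off Theorem \ref{teoremaPRNtelegrafo} by differentiating the cumulative relation (\ref{principioRiflessioneNegativoTelegrafo2}) in the variable $\beta$. Since $P_{n}^+\{M(t)\le \beta,\ \mathcal{T}(t)\in \dif x\}$ is, in $\beta$, the distribution function of $M(t)$ weighted by the infinitesimal event $\{\mathcal{T}(t)\in \dif x\}$, the joint density is recovered as $P_n^+\{M(t)\in \dif\beta,\ \mathcal{T}(t)\in \dif x\}=\frac{\partial}{\partial\beta}P_n^+\{M(t)\le\beta,\ \mathcal{T}(t)\in \dif x\}\,\dif\beta$. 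The crucial observation is that the first term on the right-hand side of (\ref{principioRiflessioneNegativoTelegrafo2}), namely $P_n^+\{\mathcal{T}(t)\in \dif x\}$, is independent of $\beta$ and is therefore annihilated by $\partial_\beta$; only the reflected term $-P_n^-\{\mathcal{T}(t)\in 2\beta-\dif x\}$ survives.

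For $n=2k+1$, $k\in\mathbb{N}$, I would then substitute the explicit marginal density of the motion started with negative velocity. By (\ref{ptd}) the law of $\mathcal{T}(t)$ conditional on an odd number of reversals does not depend on the initial speed, so setting $p(y)=\frac{(2k+1)!}{k!^2}\frac{(c^2t^2-y^2)^k}{(2ct)^{2k+1}}$ we have $P_{2k+1}^-\{\mathcal{T}(t)\in 2\beta-\dif x\}=p(2\beta-x)\,\dif x$. Differentiating, and keeping track of the factor $2$ coming from $\frac{\dif}{\dif\beta}(2\beta-x)=2$, gives $P_{2k+1}^+\{M(t)\in \dif\beta,\ \mathcal{T}(t)\in \dif x\}=-2\,p'(2\beta-x)\,\dif\beta\,\dif x$. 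Since $p'(y)=-\frac{(2k+1)!}{k!^2}\frac{2ky(c^2t^2-y^2)^{k-1}}{(2ct)^{2k+1}}$, the elementary collapse of constants $\frac{4k}{2^{2k+1}k!^2}=\frac{1}{2^{2k-1}k!(k-1)!}$ yields exactly (\ref{met+d}); the support $x\in(2\beta-ct,\beta)$ is inherited from requiring the argument $2\beta-x$ to lie in $(-ct,ct)$ together with the constraint $x\le\beta$ of the reflection principle.

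For $n=1$ no differentiation is needed, since the motion is degenerate. The proof of Theorem \ref{teoremaPRNtelegrafo} already established, in (\ref{principioRiflessioneNegativoCaso1}), that a path with $V(0)=c$ and a single reversal attains its maximum at $M(t)=\frac{ct+\mathcal{T}(t)}{2}$ almost surely. Relation (\ref{met+1}) is simply this identity recorded as a joint law, and (\ref{met+1_2}) is the same statement read through the almost sure bijection $\beta\leftrightarrow x=2\beta-ct$, so that the joint distribution of $(M(t),\mathcal{T}(t))$ collapses onto the line $x=2\beta-ct$ and each marginal carries the full mass. I expect the only delicate point of the whole argument to be the bookkeeping of the infinitesimal notation $2\beta-\dif x$, in particular remembering that the derivative in $\beta$ produces the factor $2$ while the reflection maps $\dif x$ to itself; the rebundling of the factorials is then immediate, and the velocity-independence of the odd-order law is what makes the substitution of $p$ legitimate.
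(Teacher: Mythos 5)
Your proposal is correct and follows exactly the paper's own route: the paper likewise obtains the density by differentiating the negative reflection principle (\ref{principioRiflessioneNegativoTelegrafo2}) in $\beta$ (the term $P_n^+\{\mathcal{T}(t)\in \dif x\}$ dropping out), then substitutes the velocity-independent odd-order law (\ref{ptd}) and simplifies, and it disposes of the $n=1$ case by citing (\ref{principioRiflessioneNegativoCaso1}) from the proof of Theorem \ref{teoremaPRNtelegrafo}, just as you do. The only difference is that you spell out the ``simple calculations'' (the chain-rule factor $2$ and the collapse $\frac{4k}{2^{2k+1}k!^2}=\frac{1}{2^{2k-1}k!(k-1)!}$) which the paper leaves implicit; your bookkeeping is correct.
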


\begin{proof}
For natural $n\ge 2, \ \beta \in (0,ct)$ and $x\in (2\beta -ct, \beta)$
\begin{equation}\label{relazionemet+}P_n^+\lbrace M(t) \in \dif \beta,\ \mathcal{T}(t) \in \dif x \rbrace / \dif \beta  = \frac{\partial }{\partial \beta} P_n^+\lbrace M(t) \le \beta, \mathcal{T}(t) \in \dif x \rbrace =-\frac{\partial }{\partial \beta}P_n^-\lbrace \mathcal{T}(t) \in 2\beta -\dif x\rbrace 
\end{equation}
where in the last equation we used the negative reflection principle for the telegraph process (\ref{principioRiflessioneNegativoTelegrafo2}).
Formula  (\ref{met+d}) follows from (\ref{relazionemet+}) and (\ref{ptd}) by performing simple calculations.

Result (\ref{met+1}) was shown at the beginning of the proof of Theorem \ref{teoremaPRNtelegrafo}.
\end{proof}

\begin{rem}
Let integer $k\ge 1, \ \beta \in (0,ct)$ and $2\beta -ct<x\le\beta$
\begin{align*}
P_{2k+1}^+&\lbrace M(t) \in \dif \beta,\ \mathcal{T}(t) < x \rbrace = \dif \beta \int_{2\beta-ct}^x \frac{(2k+1)!}{k!(k-1)!}\frac{(2\beta -y)\bigl(c^2t^2-(2\beta -y)^2 \bigr)^{k-1}}{2^{2k-1}(ct)^{2k+1}}\dif y \\
&= \frac{(2k+1)!}{k!^2}\frac{\bigl(c^2t^2-(2\beta -x)^2 \bigr)^{k}}{2^{2k}(ct)^{2k+1}} \dif \beta = P_{2k+1}^+\lbrace \mathcal{T} (t) \in 2\dif \beta -x \rbrace = \frac{P_{2k+1}^+\lbrace M (t) \in 2\dif \beta -x \rbrace}{2}
\end{align*}
which for $x = \beta$ coincides with distribution (\ref{leggemax+d2}). \hfill $\diamond$
\end{rem}

We now give the details about the case when $N(t) = 2k$ for natural $k$. In this case we must consider that the motion moves with positive speed at time $t$ and it may occur that it reaches its maximum at the end of the time interval. Therefore it can happen that the position at time $t$ coincides with the maximum in $[0,t]$.

\begin{cor}\label{teoremamet+p}
Let $\lbrace \mathcal{T}(t) \rbrace_{t\ge0}$ be a symmetric telegraph process. Let $k \in \mathbb{N}$
\begin{equation}\label{met+p}
 P_{2k}^+\lbrace M(t) \in \dif \beta,\ \mathcal{T}(t) \in \dif x \rbrace = \frac{(2k)!}{k!(k-1)!}\frac{\bigl[ct-(2\beta -x) \bigr]^{k-1}\bigl[ct+(2\beta -x) \bigr]^{k-2}}{2^{2k-1}(ct)^{2k}}\bigl[ ct+(2k-1)(2\beta -x)\bigr] \dif \beta \dif x,
\end{equation}
for $\beta \in (0, ct), \ x\in (2\beta-ct, \beta)$, and
\begin{align}
P_{2k}^+\lbrace M(t) =\mathcal{T}(t) \in \dif \beta \rbrace & = P_{2k}^+\lbrace \mathcal{T}(t) \in \dif \beta \rbrace - P_{2k}^-\lbrace \mathcal{T}(t) \in \dif \beta \rbrace\label{rel_m=t+_t}\\
&= \frac{(2k)!}{k!(k-1)!}\frac{2\beta(c^2t^2-\beta^2)^{k-1}}{(2ct)^{2k}} \dif \beta\label{m=t+p},
\end{align}
for $\beta \in (0,ct)$.
\end{cor}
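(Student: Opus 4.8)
\textbf{Plan for Corollary \ref{teoremamet+p}.}

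The plan is to handle the two displayed formulas separately, reducing each to the negative reflection principle for the telegraph process (\ref{principioRiflessioneNegativoTelegrafo2}) established in Theorem \ref{teoremaPRNtelegrafo}, exactly as was done in the preceding corollary for the odd case. For the joint density (\ref{met+p}) valid on $\beta\in(0,ct)$, $x\in(2\beta-ct,\beta)$, I would start from the analogue of relation (\ref{relazionemet+}): since the second branch of (\ref{principioRiflessioneNegativoTelegrafo1.1}) gives $P_{2k}^+\{M(t)\le\beta,\mathcal{T}(t)\in\dif x\}=P_{2k}^+\{\mathcal{T}(t)\in\dif x\}-P_{2k}^-\{\mathcal{T}(t)\in2\beta-\dif x\}$ on this range, differentiating in $\beta$ kills the first term (it does not depend on $\beta$) and yields
\[
P_{2k}^+\{M(t)\in\dif\beta,\mathcal{T}(t)\in\dif x\}/\dif\beta=-\frac{\partial}{\partial\beta}P_{2k}^-\{\mathcal{T}(t)\in2\beta-\dif x\}.
\]
The only new input beyond the odd case is the explicit form of $P_{2k}^-\{\mathcal{T}(t)\in\dif z\}$ at $z=2\beta-x$. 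Here I would invoke (\ref{pt+-p}) with $v=-c$, so that $sign(v)=-1$ and the density carries the factor $(ct-z)$ rather than $(ct+z)$; substituting $z=2\beta-x$ and differentiating with respect to $\beta$ then produces, by the product rule on $(c^2t^2-z^2)^{k-1}(ct-z)$, the combination $[ct-(2\beta-x)]^{k-1}[ct+(2\beta-x)]^{k-2}[ct+(2k-1)(2\beta-x)]$ up to the stated constant. This is pure bookkeeping and I would present it as ``simple calculations,'' mirroring the phrasing in the previous proof.

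For the boundary mass (\ref{rel_m=t+_t})--(\ref{m=t+p}) along the diagonal $\{M(t)=\mathcal{T}(t)\}$, the point is that when $N(t)=2k$ the motion is travelling with positive velocity at time $t$, so its terminal position can coincide with the running maximum; this atom is missed by the absolutely continuous part (\ref{met+p}). I would identify this event with the right boundary $x=\beta$ of the second regime in (\ref{principioRiflessioneNegativoTelegrafo1.1}). Setting $x=\beta$ there, the condition $\max\{0,x\}\le\beta<\tfrac{ct+x}{2}$ degenerates (the interval collapses) and the complementary event $\{M(t)=\mathcal{T}(t)\}$ carries the mass $P_{2k}^+\{\mathcal{T}(t)\in\dif\beta\}-P_{2k}^-\{\mathcal{T}(t)\in\dif\beta\}$, which is precisely (\ref{rel_m=t+_t}); plugging the two instances of (\ref{pt+-p}) with $v=\pm c$ (the densities differing only in the factor $ct\pm\beta$) makes the difference telescope to $2\beta$ times the common factor, giving (\ref{m=t+p}).

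The main obstacle I anticipate is justifying the atom in (\ref{rel_m=t+_t}) cleanly, i.e.\ arguing that on $\{M(t)=\mathcal{T}(t)\}$ the joint law collapses to a single one-dimensional density in $\beta$ rather than the two-dimensional density of (\ref{met+p}). The cleanest route is to take $x\uparrow\beta$ in the second branch of (\ref{principioRiflessioneNegativoTelegrafo1.1}) and read off the surviving mass as the limit $\lim_{x\uparrow\beta}\bigl[P_{2k}^+\{\mathcal{T}(t)\in\dif x\}-P_{2k}^-\{\mathcal{T}(t)\in2\beta-\dif x\}\bigr]$, noting that at $x=\beta$ we have $2\beta-x=\beta$ so both terms become ordinary positive/negative-velocity densities evaluated at $\beta$; everything else is the same constant-chasing as in (\ref{met+d}). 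I would organize the write-up so that (\ref{met+p}) comes first, then the diagonal case is treated as the boundary contribution, and I would relegate the algebra to a remark that the identities follow ``after straightforward computation'' from (\ref{pt+-p}).
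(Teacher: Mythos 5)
Your proposal is correct and follows essentially the same route as the paper: the density (\ref{met+p}) is obtained by differentiating the negative reflection principle (\ref{principioRiflessioneNegativoTelegrafo2}) in $\beta$ and inserting the explicit law (\ref{pt+-p}) with $v=-c$, and the diagonal atom (\ref{rel_m=t+_t}) is exactly the paper's argument (the paper even notes your variant of simply evaluating (\ref{principioRiflessioneNegativoTelegrafo2}) at $x=\beta$, which is legitimate without any limiting procedure since that formula holds on the closed interval $x\in(2\beta-ct,\beta]$). The algebra you defer does indeed telescope to the stated expressions, so there is no gap.
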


\begin{proof}
Thesis (\ref{met+p}) follows from (\ref{relazionemet+}) and (\ref{pt+-p})
\begin{align}
 P_{2k}^+&\lbrace M(t) \in \dif \beta, \ \mathcal{T}(t) \in \dif x \rbrace/\dif \beta = -\frac{\partial }{\partial \beta}P_{2k}^-\lbrace \mathcal{T} (t) \in 2\beta -\dif x\rbrace \nonumber\\
& = \frac{(2k)!}{k!(k-1)!} \Biggl[\frac{k\bigl[c^2t^2-(2\beta -x)^2 \bigr]^{k-1}-(k-1)\bigl[ct-(2\beta -x) \bigr]^k\bigl[ct+(2\beta -x) \bigr]^{k-2}}{2^{2k-1}(ct)^{2k}} \Biggr] \dif x\label{met+pcomoda}
\end{align}
which coincides with (\ref{met+p}).
\begin{align*}
P_{2k}^+\lbrace M(t) =\mathcal{T}(t) \in \dif \beta \rbrace &= P_{2k}^+\lbrace \mathcal{T}(t) \in \dif \beta \rbrace -P_{2k}^+\lbrace M(t) >\beta, \mathcal{T}(t) \in \dif \beta \rbrace \\
&= P_{2k}^+\lbrace \mathcal{T}(t) \in \dif \beta \rbrace - P_{2k}^-\lbrace \mathcal{T}(t) \in \dif \beta \rbrace,
\end{align*}
where in the last equation we used the negative reflection principle for the telegraph process (\ref{principioRiflessioneNegativoTelegrafo}). Alternatively we can simply use (\ref{principioRiflessioneNegativoTelegrafo2}) with $x = \beta$.
\end{proof}

We point out that by comparing probabilities (\ref{rel_m=t+_t}) and (\ref{tppN+t_relazione}) we obtain that
\begin{align}
P_{2k}^+\lbrace M(t) =\mathcal{T}(t) \in \dif \beta \rbrace  &= \frac{\dif \beta}{c \, \dif t}P\{F_\beta\in \dif t\ |\ V(0) = c,\ N(t) = 2k\}\label{m=t_tpp+p}\\
& = \frac{\beta}{ct}P\{M(t)\in \dif \beta \ |\ V(0) = c,\ N(t) = 2k\}.\nonumber
\end{align}
By keeping in mind the result of \cite{CO2020} concerning the expected value of the maximum of the telegraph process, that is, for integer $k\ge 1$, $\mathbb{E}[ M(t)\ |\ V(0) = c,\ N(t) = 2k] = \binom{2k}{k}ct/2^{2k}$, we easily obtain the conditional probability that the maximum in $[0,t]$ is equal to the position of the motion at time $t$
\begin{align}
P\lbrace M(t) =\mathcal{T}(t)\ |\ V(0) = c,\ N(t) = 2k\} &= \int_0^{ct} P_{2k}^+\lbrace M(t) =\mathcal{T}(t) \in \dif \beta \rbrace \label{relazonione_met_m0}\\
&= \binom{2k}{k}\frac{1}{2^{2k}} = P\{M(t) =0\ |\ V(0) = -c,\ N(t) = 2k\}.\nonumber
\end{align}
Formula (\ref{relazonione_met_m0}) shows that the probability of the particle, moving with positive initial velocity, of being at time $t$ at the maximum level it reached in the interval $[0,t]$ is equal to the probability that the motion, starting with negative speed, never crosses the level $0$ before time $t$. We point out that probability (\ref{relazonione_met_m0}) is independent of both the current time $t$ and the velocity $c$.

\begin{cor}\label{corollariomet+pripx}
Let $\lbrace \mathcal{T}(t) \rbrace_{t\ge0}$ be a symmetric telegraph process. Let $k \in \mathbb{N},\ \beta \in (0, ct),\ x \in (2\beta-ct, \beta]$
\begin{align}
P_{2k}^+\lbrace M(t) \in \dif \beta, \ \mathcal{T}(t) < x \rbrace &= P_{2k}^-\lbrace \mathcal{T}(t) \in 2\dif \beta-x \rbrace = \frac{P_{2k}^-\lbrace M(t) \in 2\dif \beta-x \rbrace}{2} \label{rel_met<+_t}\\
&=  \frac{2(2k)! }{k!(k-1)!}  \frac{\bigl[ct-(2\beta -x) \bigr]^k \bigl[ct+(2\beta -x) \bigr]^{k-1}}{(2ct)^{2k}} \dif \beta.\nonumber
\end{align}
\end{cor}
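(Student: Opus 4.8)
The plan is to reduce everything to the negative reflection principle (\ref{principioRiflessioneNegativoTelegrafo2}) together with the explicit law (\ref{pt+-p}). First I would write, for fixed $x\le\beta$,
\[
P_{2k}^+\{M(t)\le\beta,\ \mathcal{T}(t)<x\}=\int_{-ct}^{x}P_{2k}^+\{M(t)\le\beta,\ \mathcal{T}(t)\in\dif y\},
\]
and split the range at $y=2\beta-ct$. For $y\le 2\beta-ct$ the reflected level $2\beta-y\ge ct$ lies outside the support of $\mathcal{T}(t)$, so by Theorem \ref{principioRiflessioneNegativo} the set $\mathcal{P}_{2k}^-$ is empty, hence so is $\mathcal{P}_{2k}^+$, and $P_{2k}^+\{M(t)\le\beta,\ \mathcal{T}(t)\in\dif y\}=P_{2k}^+\{\mathcal{T}(t)\in\dif y\}$; for $2\beta-ct<y\le x$ I substitute (\ref{principioRiflessioneNegativoTelegrafo2}). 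This yields
\[
P_{2k}^+\{M(t)\le\beta,\ \mathcal{T}(t)<x\}=P_{2k}^+\{\mathcal{T}(t)<x\}-\int_{2\beta-ct}^{x}P_{2k}^-\{\mathcal{T}(t)\in 2\beta-\dif y\}.
\]

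Next I would differentiate in $\beta$, using $P_{2k}^+\{M(t)\in\dif\beta,\ \mathcal{T}(t)<x\}=\partial_\beta P_{2k}^+\{M(t)\le\beta,\ \mathcal{T}(t)<x\}\,\dif\beta$. The first term is $\beta$-free, so only the integral contributes. The change of variables $z=2\beta-y$ turns it into $\int_{2\beta-x}^{ct}P_{2k}^-\{\mathcal{T}(t)\in\dif z\}$, whose $\beta$-derivative comes entirely from the lower limit and equals $-2$ times the $P_{2k}^-$-density at $z=2\beta-x$. The two sign changes combine to give precisely $P_{2k}^+\{M(t)\in\dif\beta,\ \mathcal{T}(t)<x\}=P_{2k}^-\{\mathcal{T}(t)\in 2\dif\beta-x\}$, which is the first asserted equality, with the factor $2$ supplied automatically by $\dif z=2\dif\beta$. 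Inserting (\ref{pt+-p}) with $v=-c$ at $z=2\beta-x$ and factoring $c^2t^2-z^2=(ct-z)(ct+z)$ then produces the closed form on the last line of the statement.

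As an alternative matching the computation already used for the odd case in the Remark following (\ref{met+d}), I would instead integrate the joint density (\ref{met+p}) directly, $P_{2k}^+\{M(t)\in\dif\beta,\ \mathcal{T}(t)<x\}=\dif\beta\int_{2\beta-ct}^{x}(\,\cdots)\,\dif y$. The key observation is that, writing $u=2\beta-y$, the integrand equals, up to the constant prefactor, $-\tfrac{d}{du}\bigl[(ct-u)^{k}(ct+u)^{k-1}\bigr]$; indeed differentiating this product reproduces exactly the factor $ct+(2k-1)u$ appearing in (\ref{met+p}). The integral therefore telescopes and, since the product vanishes at $u=ct$, equals $(ct-(2\beta-x))^{k}(ct+(2\beta-x))^{k-1}$, giving the same closed form after simplifying $2^{-(2k-1)}(ct)^{-2k}=2\,(2ct)^{-2k}$. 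Spotting this perfect-derivative structure is the one genuinely non-routine step in this route.

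Finally, for the remaining identity $P_{2k}^-\{\mathcal{T}(t)\in 2\dif\beta-x\}=\tfrac12\,P_{2k}^-\{M(t)\in 2\dif\beta-x\}$ I would invoke the reflection principle for the maximum of the \emph{negatively} started motion at a positive level, namely $P_{2k}^-\{M(t)\in\dif m\}=2\,P_{2k}^-\{\mathcal{T}(t)\in\dif m\}$ for $m>0$ (the analogue of (\ref{leggemax+d2}), valid because a positive maximum forces an upcrossing of $0$, after which the usual reflection at level $m$ applies); here $m=2\beta-x\ge\beta>0$, so it is applicable. I expect this last equality to be the main obstacle, since it relies on a reflection principle for the negatively oriented motion that is not part of the material above and must be established or cited separately; by contrast, the first equality and the explicit density follow cleanly and self-containedly from (\ref{principioRiflessioneNegativoTelegrafo2}) and (\ref{pt+-p}).
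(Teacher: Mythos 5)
Your proposal is correct, and your second route coincides with the paper's own proof: the paper integrates the joint density (\ref{met+pcomoda}) over $y\in(2\beta-ct,x)$ and then integrates by parts, which is exactly your perfect-derivative identity
$k(ct-u)^{k-1}(ct+u)^{k-1}-(k-1)(ct-u)^{k}(ct+u)^{k-2}=-\frac{\dif}{\dif u}\bigl[(ct-u)^{k}(ct+u)^{k-1}\bigr]$
in disguise, with the boundary term at $u=ct$ vanishing. Your first route --- integrating the negative reflection principle (\ref{principioRiflessioneNegativoTelegrafo2}) over $y<x$ and then differentiating in $\beta$, so that the answer comes entirely from the moving lower limit $2\beta-x$ --- is a legitimate reordering of the same two operations (it mirrors how the paper obtains the pointwise density in (\ref{relazionemet+})) and has the small advantage of requiring no integration by parts. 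The one point you misjudge is the status of the middle equality: $P_{2k}^-\lbrace M(t)\in\dif m\rbrace=2\,P_{2k}^-\lbrace \mathcal{T}(t)\in\dif m\rbrace$ for $m\in(0,ct)$ does not need to be established from scratch; it is Theorem 3.1 of \cite{CO2020}, quoted in this paper as (\ref{leggemax-p}) at the start of the subsection on the negatively started motion, and your remark that $2\beta-x\ge\beta>0$ (together with $2\beta-x<ct$) is precisely what makes it applicable here. So the ``main obstacle'' you anticipate is a citation, not a missing proof.
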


\begin{proof}
By means of formula (\ref{met+pcomoda}), we have
\begin{align*}
P_{2k}^+&\lbrace M(t) \in \dif \beta, \ \mathcal{T}(t) < x \rbrace \\
&=\frac{2(2k)! \ \dif \beta}{k!(k-1)!(2ct)^{2k}}\int_{2\beta-ct}^{x} \biggl(k\bigl[c^2t^2-(2\beta -y)^2 \bigr]^{k-1}-(k-1)\bigl[ct-(2\beta -y) \bigr]^k\bigl[ct+(2\beta -y) \bigr]^{k-2} \biggr)\dif y \\
&=\frac{2(2k)! \ \dif \beta}{k!(k-1)!(2ct)^{2k}}\Biggl[k \int_{2\beta-ct}^{x}\bigl[c^2t^2-(2\beta -y)^2 \bigr]^{k-1} \dif y + \bigl[ct-(2\beta -x) \bigr]^k \bigl[ct+(2\beta -x) \bigr]^{k-1} \\
&\ \ \ - k \int_{2\beta-ct}^{x} \bigl[ct-(2\beta -y) \bigr]^{k-1}\bigl[ct+(2\beta -y) \bigr]^{k-1} \dif y\Biggr],
\end{align*}
which coincides with (\ref{rel_met<+_t}). Note that in the last equality we integrated by parts the second integral.
\end{proof}

By means of (\ref{rel_met<+_t}) we can easily check the following
\begin{align*}
P_{2k}^+\lbrace M(t) \in \dif \beta, \ \mathcal{T}(t) \le \beta \rbrace &= P_{2k}^+\lbrace M(t) \in \dif \beta, \ \mathcal{T}(t) < \beta \rbrace + P_{2k}^+\lbrace M(t)=\mathcal{T}(t)  \in \dif \beta \rbrace \\
& = 2P_{2k}^-\lbrace \mathcal{T}(t) \in \dif \beta\rbrace +P_{2k}^+\lbrace \mathcal{T}(t) \in \dif \beta \rbrace - P_{2k}^-\lbrace \mathcal{T}(t) \in \dif \beta \rbrace = P_{2k}^+\lbrace M(t) \in \dif \beta \rbrace,
\end{align*}
where we used (\ref{leggemax+d2}) in the last equation.

\begin{rem}
In light of (\ref{rel_met<+_t}), we can write
\begin{align}
P_{2k}^+\lbrace M(t)= \mathcal{T}(t)\in \dif \beta \rbrace &= P_{2k}^+\lbrace M(t) \in \dif \beta \rbrace - P_{2k}^+\lbrace M(t) \in \dif \beta, \ \mathcal{T}(t)<\beta \rbrace \nonumber\\
& = P_{2k}^+\lbrace M(t) \in \dif \beta\rbrace -P_{2k}^-\lbrace M(t) \in \dif \beta\rbrace,
\end{align}
that is another form of (\ref{rel_m=t+_t}).
\\Furthermore, we have that, for $\beta \in (0, ct)$ and $0<x<\beta$
\begin{align}
\int_x^\beta P_{2k}^+\lbrace M(t) \in \dif w,\ \mathcal{T}(t) \in \dif x \rbrace& = P_{2k}^+\lbrace M(t) \le\beta,\ \mathcal{T}(t)\in \dif x \rbrace -P_{2k}^+\lbrace M(t)= \mathcal{T}(t)\in \dif x \rbrace \nonumber\\
&= P_{2k}^-\lbrace \mathcal{T}(t) \in \dif x \rbrace -P_{2k}^-\lbrace \mathcal{T}(t) \in 2\beta-\dif x \rbrace,\label{met+pint}
\end{align}
where in the last equation we used formulas (\ref{principioRiflessioneNegativoTelegrafo2}) and (\ref{rel_m=t+_t}).
\hfill $\diamond$
\end{rem}

It is important to establish these relationships in order to show the strong connection between all the distributions. Furthermore, we are often able to avoid the use of the explicit form of the joint distributions in the calculations.

Now we present the joint distribution of the motion and its maximum conditioned on a positive starting velocity.

\begin{thm}
Let $\{\mathcal{T}(t)\}_{t\ge0}$ be a symmetric telegraph process. For $\beta\in (0,ct)$
\begin{equation}\label{met=f(m)+}
P\Big\{M(t) \in \dif \beta,\ \mathcal{T}(t) = 2M(t)-ct\ \Big|\ V(0)=c\Big\}= \frac{\lambda\,e^{-\lambda t}}{c}\dif \beta,
\end{equation}
or alternatively, for $x\in (-ct, ct)$
\begin{equation}\label{m=f(t)et+}
P\Big\{M(t) =\frac{\mathcal{T}(t)+ct}{2},\ \mathcal{T}(t)\in \dif x\ \Big|\ V(0)=c\Big\}= \frac{\lambda\,e^{-\lambda t}}{2c}\dif x.
\end{equation}
For $\beta \in (0,ct),\ x\in(2\beta-ct,\beta)$
\begin{align}
P\{M(t) \in \dif \beta,\ \mathcal{T}&(t)\in \dif x\ |\ V(0)=c\}\label{met+}\\
& = \frac{\lambda\,e^{-\lambda t}}{c\sqrt{ct+(2\beta-x)}}\Biggl[ \frac{\lambda(2\beta-x)}{c\sqrt{ct+(2\beta-x)}}\, I_0\Bigl(\frac{\lambda}{c}\sqrt{c^2t^2-(2\beta-x)^2}\Bigr)\nonumber\\
&\ \ \ + \biggl(\frac{\lambda (2\beta-x)}{c\sqrt{ct-(2\beta-x)}}+\frac{\sqrt{ct-(2\beta-x)}}{ct+(2\beta-x)}\biggr)\,I_1\Bigl(\frac{\lambda}{c}\sqrt{c^2t^2-(2\beta-x)^2}\Bigr)\Biggr] \dif \beta \dif x\nonumber
\end{align}
and
\begin{equation}\label{m=t+}
P\{M(t) = \mathcal{T}(t)\in \dif \beta\ |\ V(0)=c\}= \frac{\lambda \beta\,e^{-\lambda t}}{c\sqrt{c^2t^2-\beta^2}}\,I_1\Bigl(\frac{\lambda}{c}\sqrt{c^2t^2-\beta^2}\Bigr) \dif \beta.
\end{equation}
Finally, there is a singular component
\begin{equation}\label{m=t=ct+}
P\{M(t) = \mathcal{T}(t)=ct\ |\ V(0)=c\}= e^{-\lambda t}.
\end{equation}
\end{thm}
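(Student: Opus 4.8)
The plan is to obtain each component of the law by removing the conditioning on $N(t)$, i.e.\ by averaging the conditional results of the previous corollaries against the Poisson weights. Since $\{N(t)\}_{t\ge0}$ is independent of $V(0)$, for every event $A$ one has $P\{A\mid V(0)=c\}=\sum_{n\ge0}P_n^+\{A\}\,e^{-\lambda t}(\lambda t)^n/n!$. First I would decompose the joint law of $(M(t),\mathcal{T}(t))$ into the structurally distinct pieces it comprises: the point mass at $(ct,ct)$; the singular component carried by the line $x=2\beta-ct$, i.e.\ $\mathcal{T}(t)=2M(t)-ct$; the singular component on the diagonal $M(t)=\mathcal{T}(t)$; and the absolutely continuous part on the interior $x\in(2\beta-ct,\beta)$. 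Each piece is summed separately, and the decisive analytic step throughout is to recognise the resulting power series as the modified Bessel functions $I_0(z)=\sum_{m\ge0}(z/2)^{2m}/(m!)^2$ and $I_1(z)=\sum_{m\ge0}(z/2)^{2m+1}/(m!\,(m+1)!)$ evaluated at $z=\frac{\lambda}{c}\sqrt{c^2t^2-(2\beta-x)^2}$.

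The two singular pieces on the lines are the simplest. Equation (\ref{m=t=ct+}) is immediate, since $M(t)=\mathcal{T}(t)=ct$ occurs exactly on $\{N(t)=0\}$, whose probability is $e^{-\lambda t}$. For (\ref{met=f(m)+}) I would show that only the term $n=1$ survives. The mass on the line $x=2\beta-ct$ is the jump of the distribution function $\beta\mapsto P_n^+\{M(t)\le\beta,\ \mathcal{T}(t)\in\dif x\}$ at the transition $\beta=(ct+x)/2$ in (\ref{principioRiflessioneNegativoTelegrafo1.1}); its left limit is governed by $P_n^-\{\mathcal{T}(t)\in 2\beta-\dif x\}$ taken at $2\beta-x=ct$, and by (\ref{pt+-p}) and (\ref{ptd}) this density vanishes at the endpoint $ct$ for every $n\ge2$ and is nonzero only for $n=1$. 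Hence no atom forms on that line for $n\ge2$, while for $n=1$ formula (\ref{met+1_2}) together with $P_1^+\{M(t)\in\dif\beta\}=\dif\beta/(ct)$ (the turning point $M(t)=cT_1$ is uniform) and the weight $\lambda t\,e^{-\lambda t}$ yields exactly (\ref{met=f(m)+}). Formula (\ref{m=f(t)et+}) is merely its restatement under $\beta=(x+ct)/2$, $\dif\beta=\dif x/2$.

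For the diagonal density (\ref{m=t+}) I would sum (\ref{m=t+p}) over the even indices $n=2k$, $k\ge1$ (the value $k=0$ is the point mass already accounted for in (\ref{m=t=ct+})). After multiplication by $e^{-\lambda t}(\lambda t)^{2k}/(2k)!$ the factorial $(2k)!$ cancels, and up to the common factor $2\beta\,e^{-\lambda t}\,\dif\beta$ the general term is $\frac{(c^2t^2-\beta^2)^{k-1}}{k!\,(k-1)!}(\frac{\lambda}{2c})^{2k}$; the shift $m=k-1$ turns the series into $(\frac{\lambda}{2c})^2\sum_{m\ge0}\frac{[(\lambda/2c)^2(c^2t^2-\beta^2)]^{m}}{m!\,(m+1)!}=(\frac{\lambda}{2c})^2\,\frac{2}{z}\,I_1(z)$, which after simplification is precisely (\ref{m=t+}).

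The absolutely continuous part (\ref{met+}) is the heart of the proof and the main obstacle. Writing $u=2\beta-x$, $a=ct-u$, $b=ct+u$ (so $c^2t^2-u^2=ab$ and $z=\frac{\lambda}{c}\sqrt{ab}$), I would sum the odd densities (\ref{met+d}) and the even densities (\ref{met+p}), each over $k\ge1$ and against the Poisson law. The odd sum has the shape already encountered and collapses, via $\sum_{k\ge1}\frac{(ab)^{k-1}}{k!\,(k-1)!}(\frac{\lambda}{2c})^{2k}=\frac{\lambda}{2c\sqrt{ab}}I_1(z)$, to $\frac{\lambda^2 u}{c^2\sqrt{ab}}I_1(z)$, producing the first half of the $I_1$ coefficient in (\ref{met+}). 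The genuine difficulty is the even sum, whose summand carries the asymmetric powers $a^{k-1}b^{k-2}$ and the linear-in-$k$ factor $ct+(2k-1)u$. The plan is to use the identity $ct+(2k-1)u=k\,b-(k-1)\,a$ to split this factor into two index-shiftable pieces: the first reindexes to $(\frac{\lambda}{2c})^2\sum_{m\ge0}\frac{(ab)^m}{(m!)^2}(\frac{\lambda}{2c})^{2m}=(\frac{\lambda}{2c})^2 I_0(z)$, while the second, after writing $a^{k}b^{k-2}=a^2(ab)^{k-2}$, reindexes to a multiple of $\sum_{m\ge0}\frac{(ab)^m}{m!\,(m+2)!}$, i.e.\ of $I_2(z)$. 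Invoking the recurrence $I_2(z)=I_0(z)-\frac{2}{z}I_1(z)$ reduces everything to $I_0$ and $I_1$; collecting the $I_0$ term, combining the new $I_1$ term with the odd contribution, and factoring out $\frac{\lambda e^{-\lambda t}}{c\sqrt{ct+u}}$ then reproduces (\ref{met+}) exactly. The bulk of the work, and the single delicate point, lies in this reduction of the even sum and in tracking the index shifts that convert the asymmetric powers $a^{k-1}b^{k-2}$ into the prefactors $1/\sqrt{ct\pm u}$ and $1/(ct+u)$ appearing in the statement.
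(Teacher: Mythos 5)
Your proposal is correct and follows essentially the same route as the paper: the point mass at $(ct,ct)$ from $\{N(t)=0\}$, the line component from the $n=1$ conditional laws, the diagonal density by summing $P^+_{2k}\{M(t)=\mathcal{T}(t)\in\dif\beta\}$ against the Poisson weights, and the absolutely continuous part by summing the odd and even conditional densities separately and reducing $I_2$ via the recurrence $I_2(z)=I_0(z)-\tfrac{2}{z}I_1(z)$ (your split $ct+(2k-1)(2\beta-x)=k[ct+(2\beta-x)]-(k-1)[ct-(2\beta-x)]$ is exactly the form the paper uses in its intermediate formula). The only difference is cosmetic: you justify explicitly, via the vanishing of the endpoint densities for $n\ge 2$, why only $n=1$ contributes mass to the line $\mathcal{T}(t)=2M(t)-ct$, a point the paper asserts without detail.
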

As usual, $I_r(x) = \sum_{j=0}^\infty \bigl(\frac{x}{2}\bigr)^{2j+r} \frac{1}{j!\Gamma(j+1+r)}$ is the modified Bessel function of first type with imaginary argument and order $r\in \mathbb{R}$.

\begin{proof}
Formulas (\ref{met=f(m)+}) and (\ref{m=f(t)et+}) can be respectively derived from (\ref{met+1}) and (\ref{met+1_2}) where only one change of direction occurs in the time interval $[0,t]$.

To prove result (\ref{met+}) we consider the following joint distributions. For $\beta \in (0,ct)$ and $x\in (2\beta-ct, \beta)$, by means of (\ref{met+d}) we obtain that
\begin{equation}\label{met+Dispari}
P\{M(t) \in \dif \beta,\, \mathcal{T}(t)\in \dif x,\, N(t) \ \text{odd}\, |\, V(0)=c\}=\frac{\lambda^2\,(2\beta-x)\,e^{-\lambda t}}{c^2\,\sqrt{c^2t^2-(2\beta-x)^2}}\,I_1\Bigl(\frac{\lambda}{c}\sqrt{c^2t^2-(2\beta-x)^2}\Bigr)\dif \beta \dif x
\end{equation}
and by means of (\ref{met+pcomoda}) we obtain that
\begin{align}
P&\{M(t) \in \dif \beta,\ \mathcal{T}(t)\in \dif x,\ N(t) \ \text{odd}\ |\ V(0)=c\}/(\dif \beta \dif x)\nonumber\\
& =\frac{\lambda^2\,e^{-\lambda t}}{2c^2} \Biggl[I_0\Bigl(\frac{\lambda}{c}\sqrt{c^2t^2-(2\beta-x)^2}\Bigr)-\frac{ct-(2\beta-x)}{ct+(2\beta-x)}\,I_2\Bigl(\frac{\lambda}{c}\sqrt{c^2t^2-(2\beta-x)^2}\Bigr) \Biggr]\nonumber\\
&= \frac{\lambda\,e^{-\lambda t}}{c\bigl[ ct+(2\beta-x)\bigr]}\, \Biggl[\frac{\lambda\,(2\beta-x)}{c}\,I_0\Bigl(\frac{\lambda}{c}\sqrt{c^2t^2-(2\beta-x)^2}\Bigr)+ \sqrt{\frac{ct-(2\beta-x)}{ct+(2\beta-x)}}\,I_1\Bigl(\frac{\lambda}{c}\sqrt{c^2t^2-(2\beta-x)^2}\Bigr)\Biggr], \label{met+Pari2}
\end{align}
where in the last equality we used the recurrence relationship for the Bessel functions, with $n$ integer
\begin{equation}\label{relazioneBessel}
I_{n+1}(x)= I_{n-1}(x)-\frac{2n}{x}I_n(x).
\end{equation}
By summing up (\ref{met+Dispari}) and (\ref{met+Pari2}) we obtain the claimed result (\ref{met+}).

To prove (\ref{m=t+}) it is sufficient to consider (\ref{m=t+p}) and perform some simple calculation.

Finally, the probability mass (\ref{m=t=ct+}) coincides with the probability that no switches occur in $[0,t]$.
\end{proof}

We point out that, as well as we showed in the conditional case (\ref{m=t_tpp+p}), probabilities (\ref{m=t+}) and (\ref{tpp+}) are related by
\begin{equation}
P\{M(t) = \mathcal{T}(t)\in \dif \beta\ |\ V(0) = c\} =  \frac{\dif \beta}{c \, \dif t}P\{F_\beta\in \dif t\ |\ V(0) = c\}.
\end{equation}

\subsection{Motion starting with negative velocity}

We now study the case when the motion starts with a negative initial velocity. It is important to recall that, when $V(0)=-c$, the particle may not cross the level $x=0$ in the time interval $[0,t]$ with a positive probability, displayed in formula (\ref{INTROmassimoSingolarita0}).
\\

We begin by recalling the result of Theorem $3.1$ of \cite{CO2020}, for $\beta \in (0,ct)$ and natural $k$, we have
\begin{equation}\label{leggemax-p}
P_{2k}^-\{M(t)\in \dif \beta \} =  \frac{2(2k)!}{k!(k-1)!}\frac{(ct-\beta)^{k}(ct+\beta)^{k-1}}{(2ct)^{2k}}\dif \beta= 2\, P_{2k}^-\{\mathcal{T}(t)\in \dif \beta\}.
\end{equation}
This distribution already shows that a classical reflection principle holds. The next results are confirming this incredible behavior.

\begin{thm}\label{teoremamet-p}
Let $\lbrace \mathcal{T}(t) \rbrace_{t\ge0}$ be a symmetric telegraph process. Let $k \in \mathbb{N}$, for $x \in (-ct, 0]$
\begin{align}
P_{2k}^-\lbrace M(t) =0,\ \mathcal{T}(t) \in \dif x \rbrace &= P_{2k}^+\lbrace M(t) =\mathcal{T}(t) \in -\dif x \rbrace \label{m0t-p_+}\\
&=P_{2k}^-\lbrace \mathcal{T}(t) \in \dif x \rbrace -P_{2k}^+\lbrace \mathcal{T}(t) \in \dif x \rbrace\label{m0t-p}
\end{align}
and, for $\beta \in (0, ct), \ x\in (2\beta-ct, \beta)$
\begin{equation}\label{met-p}
P_{2k}^-\lbrace M(t) \in \dif \beta,\ \mathcal{T}(t) \in \dif x \rbrace= P_{2k}^+\lbrace M(t) \in \dif \beta,\ \mathcal{T}(t) \in \dif x \rbrace .
\end{equation}
\end{thm}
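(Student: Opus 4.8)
The plan is to reduce everything to the positive-velocity quantities already computed by exhibiting a single measure-preserving bijection between the two families of trajectories. Conditionally on $N(t)=2k$ the arrival times are uniform on the simplex $\{0<t_1<\dots<t_{2k}<t\}$, so every path carries the same probability density and this law is invariant under the time reversal $t_i\mapsto t-t_{2k+1-i}$. I would therefore send a positively started path $\mathcal{T}^+$ with $N(t)=2k$ to
\begin{equation}\label{mapPRN}
\mathcal{T}^-(s):=\mathcal{T}^+(t-s)-\mathcal{T}^+(t),\qquad s\in[0,t].
\end{equation}
Since $2k$ is even the terminal velocity of $\mathcal{T}^+$ is $+c$, hence $\mathcal{T}^-$ starts at $0$ with velocity $-c$, keeps $N(t)=2k$, and the map is a bijection onto the negatively started paths. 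A direct check gives $\mathcal{T}^-(t)=-\mathcal{T}^+(t)$ and $\displaystyle\max_{0\le s\le t}\mathcal{T}^-(s)=\max_{0\le s\le t}\mathcal{T}^+(s)-\mathcal{T}^+(t)$. As the map preserves the conditional law, this yields the master identity of measures
\begin{equation}\label{masterrelation}
P_{2k}^-\{M(t)\in\dif\beta,\ \mathcal{T}(t)\in\dif x\}=P_{2k}^+\{M(t)-\mathcal{T}(t)\in\dif\beta,\ \mathcal{T}(t)\in-\dif x\},
\end{equation}
valid for every $\beta\ge0$ and $x$.

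From (\ref{masterrelation}) the three statements follow quickly. Restricting the identity to its singular component on $\{\beta=0\}$, that is to the set $\{M(t)=\mathcal{T}(t)\}$ on the right, gives at once (\ref{m0t-p_+}), namely $P_{2k}^-\{M(t)=0,\ \mathcal{T}(t)\in\dif x\}=P_{2k}^+\{M(t)=\mathcal{T}(t)\in-\dif x\}$ for $x\le 0$, the complementary absolutely continuous part being treated in the next paragraph. I would then rewrite the right-hand side via the already proven relation (\ref{rel_m=t+_t}), in the form $P_{2k}^+\{M(t)=\mathcal{T}(t)\in-\dif x\}=P_{2k}^+\{\mathcal{T}(t)\in-\dif x\}-P_{2k}^-\{\mathcal{T}(t)\in-\dif x\}$, and apply the elementary negation symmetry $P_{2k}^{\pm}\{\mathcal{T}(t)\in-A\}=P_{2k}^{\mp}\{\mathcal{T}(t)\in A\}$ (coming from the path negation $\mathcal{T}\mapsto-\mathcal{T}$ alone) to obtain $P_{2k}^-\{\mathcal{T}(t)\in\dif x\}-P_{2k}^+\{\mathcal{T}(t)\in\dif x\}$, which is (\ref{m0t-p}).

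For (\ref{met-p}) I would take $\beta>0$ in (\ref{masterrelation}) and compare densities. The decisive observation is that the positive-velocity joint density (\ref{met+p}) depends on $(\beta,x)$ only through the combination $2\beta-x$; writing it as $g(2\beta-x)\,\dif\beta\,\dif x$, the linear change of variables $(\beta,x)\mapsto(\beta-x,-x)$ that carries $P_{2k}^+\{M(t)\in\dif\beta,\ \mathcal{T}(t)\in\dif x\}$ into the right-hand side of (\ref{masterrelation}) has unit Jacobian and sends $g\bigl(2(\beta-x)-(-x)\bigr)=g(2\beta-x)$ back to itself. Hence the right-hand side of (\ref{masterrelation}) equals $P_{2k}^+\{M(t)\in\dif\beta,\ \mathcal{T}(t)\in\dif x\}$; one then checks that the admissible region $\{0<\beta<ct,\ 2\beta-ct<x<\beta\}$ is exactly preserved, since $M^+=\beta-x$ lies in $(0,ct)$ and $\mathcal{T}^+=-x\in(2M^+-ct,M^+)$ precisely when $2\beta-ct<x<\beta$.

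The main obstacle is the rigorous justification of (\ref{masterrelation}). One must verify that the reversal map (\ref{mapPRN}) is genuinely measure preserving (uniformity of the arrival times on the simplex and its invariance under $t_i\mapsto t-t_{2k+1-i}$), that the even parity of $N(t)$ is what guarantees the reversed--negated path restarts with velocity $-c$ and stays in the $N(t)=2k$ class, and that the functional $M(t)$ transforms as $M^-=M^+-\mathcal{T}^+(t)$ including on the atom $\{M=\mathcal{T}\}$ where the singular masses are concentrated. Once this bookkeeping is settled, the remaining steps are purely algebraic and rely only on results established earlier in the paper.
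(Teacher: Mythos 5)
Your proposal is correct, but it proves the theorem by a genuinely different route than the paper. The paper argues by a renewal-type decomposition: it conditions on the first Poisson event $T_1=t_1$ (where the negatively started particle sits at level $-ct_1$ and turns upward), distinguishes whether the remaining time suffices to cross level $0$ (resp.\ reach $\beta$), and evaluates the resulting integrals by means of the negative reflection principle (\ref{principioRiflessioneNegativoTelegrafo2}) and the positive-start densities (\ref{met+d})--(\ref{met+p}), thereby reducing the $2k$-switch negative-start case to $(2k-1)$-switch positive-start quantities. You instead use the time reversal $\mathcal{T}^-(s)=\mathcal{T}^+(t-s)-\mathcal{T}^+(t)$, which is measure preserving because, conditionally on $N(t)=2k$, the arrival times are uniform on the simplex and invariant under $t_i\mapsto t-t_{2k+1-i}$, and which maps positive-start even-switch paths bijectively onto negative-start even-switch paths (the even parity is exactly what fixes the terminal, hence reversed initial, velocity; for odd $n$ the same map sends $P^+_n$ to itself, consistent with the fact that Theorem \ref{teoremamet-d} is \emph{not} a mirror of the positive case). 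Pathwise the map sends $(M(t),\mathcal{T}(t))$ to $(M(t)-\mathcal{T}(t),-\mathcal{T}(t))$, so your master identity holds as an equality of joint laws including singular parts; it unifies (\ref{m0t-p_+}) and (\ref{met-p}) as the singular and absolutely continuous components of one single equation, while (\ref{m0t-p}) drops out via (\ref{rel_m=t+_t}) and the negation symmetry $P^{\pm}_{2k}\{\mathcal{T}(t)\in -A\}=P^{\mp}_{2k}\{\mathcal{T}(t)\in A\}$. What your route buys: no integration at all, a conceptual explanation of why the even-switch case enjoys these coincidences, and a statement (the master identity) that is actually stronger than the three displayed formulas. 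Note also that the only structural input you need for (\ref{met-p}) is that the density (\ref{met+p}) depends on $(\beta,x)$ only through $2\beta-x$, and this can be read off abstractly from (\ref{relazionemet+}) --- the positive-start joint density is $-\partial_\beta$ of a function of $2\beta-x$ --- so the explicit formula is not even required. What the paper's route buys: it stays inside the inductive machinery of Section 3 and needs no separate justification of measure preservation under reversal. The verifications you flag as bookkeeping (simplex invariance, parity of the terminal velocity, pathwise transformation of the maximum on the atom $\{M(t)=\mathcal{T}(t)\}$) are indeed routine and all hold, so your proof is complete once they are written out.
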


\begin{proof}
For $k=1$, the reader can prove the theorem by means of (\ref{met+1}) and by proceeding as shown below.

We begin by showing formula (\ref{m0t-p_+}). Let natural $k \ge 2$ and $x \in (-ct, 0]$. At time $T_1 = t_1$ the particle is at level $-ct_1$ and:

- Case 1. It has time to overcome level $0$  and to be in $x$ at time $t$, i.e. if $c(t-t_1)> ct_1 +|x|=ct_1-x \iff t_1 < \frac{ct+x}{2c} \in \bigl(0, \frac{t}{2}\bigr)$.

- Case 2. It has no time to overcome level $0$, but it must be in $x$ at time $t$, i.e. if $c(t-t_1)> ct_1 +x \iff t_1 < \frac{ct-x}{2c} \in \bigl(\frac{t}{2}, t\bigr)$.

In light of these two cases, we proceed as follows
\begin{align*}
P_{2k}^-\lbrace M(t) =0,\ &\mathcal{T}(t) \in \dif x \rbrace \\
&= \int_0^{ \frac{ct+x}{2c}} P_{2k-1}^+\lbrace M(t-t_1) \le ct_1,\ \mathcal{T}(t-t_1) \in \dif x+ct_1 \rbrace P\lbrace T_1 \in \dif t_1 \ |\ N(t) = 2k\rbrace \\
&\ \ \ + \int_{ \frac{ct+x}{2c}}^{\frac{ct-x}{2c}}  P_{2k-1}^+\lbrace \mathcal{T}(t-t_1) \in \dif x+ct_1 \rbrace P\lbrace T_1 \in \dif t_1 \ |\ N(t) = 2k\rbrace\\
& = \int_0^{ \frac{ct-x}{2c}}   P_{2k-1}^+\lbrace \mathcal{T}(t-t_1) \in \dif x+ct_1 \rbrace P\lbrace T_1 \in \dif t_1 \ |\ N(t) = 2k\rbrace\\
&\ \ \ - \int_0^{ \frac{ct+x}{2c}} P_{2k-1}^-\lbrace \mathcal{T}(t-t_1) \in ct_1-\dif x \rbrace P\lbrace T_1 \in \dif t_1 \ |\ N(t) = 2k\rbrace \\
&=P_{2k}^-\lbrace \mathcal{T}(t) \in \dif x \rbrace -P_{2k}^+\lbrace \mathcal{T}(t) \in \dif x \rbrace,
\end{align*}
where in the second step we applied the negative reflection principle for the telegraph process (\ref{principioRiflessioneNegativoTelegrafo2}).

We now prove the joint density (\ref{met-p}). Let natural $k \ge 2,\ \beta \in (0, ct)$ and $x\in (2\beta-ct, \beta)$. At time $T_1 = t_1$ the motion is at position $-ct_1$ and it starts moving with speed $+c$. It must have time to reach $\beta$ and to be in $x$ at time $t$, then we need $c(t-t_1)>ct_1+\beta + |\beta -x| = ct_1+2\beta -x \iff t_1<\frac{ct-2\beta+x}{2c} \in (0, t/2)$.
Now, we can write
\begin{align*}
P_{2k}^-&\lbrace M(t) \in \dif \beta,\, \mathcal{T}(t) \in \dif x \rbrace \\
& = \int_0^{\frac{ct-2\beta+x}{2c}} P_{2k-1}^+\lbrace M(t-t_1) \in \dif \beta +ct_1,\,\mathcal{T}(t-t_1) \in \dif x+ct_1 \rbrace P\lbrace T_1 \in \dif t_1 \, |\, N(t) = 2k\rbrace 
\end{align*}
and by keeping in mind (\ref{met+d}) we obtain that it coincides with the joint density (\ref{met+p}) regarding the positively initially oriented motion with an even number of reversals.
\end{proof}

\begin{rem}
We want to underline the following relationships. From (\ref{m0t-p}), we have, for  natural $k$ and $x \in (-ct, 0]$
\begin{equation}\label{m0t-prip}
P_{2k}^-\lbrace M(t) =0,\ \mathcal{T}(t) \le x \rbrace = \int_{-ct}^x P_{2k}^-\lbrace M(t) =0,\ \mathcal{T}(t) \in \dif y \rbrace = P_{2k+1}^+\lbrace \mathcal{T}(t) \in \dif x \rbrace  \frac{2ct}{(2k+1) \dif x}.
\end{equation}
By keeping in mind formulas (\ref{m0t-p_+}) and (\ref{relazonione_met_m0}) we easily check the following
\begin{equation}
\int_{-ct}^0 P_{2k}^-\lbrace M(t) =0,\ \mathcal{T}(t) \in \dif y \rbrace  =\binom{2k}{k}\frac{1}{2^{2k}} =  P\lbrace M(t) =0\ |\ V(0) = -c,\ N(t) = 2k\rbrace ,
\end{equation}
that is the probability mass that the motion never crosses the level $x=0$ when it starts with a negative initial velocity and changes its direction $2k$ times in the time interval $[0,t]$.
\hfill $\diamond$
\end{rem}

By taking into account the properties of the symmetric telegraph process, it is easy to show that, for $v=\pm c, \ \beta \in (0, ct)$ and natural $n$,
\begin{equation}\label{maxMinCondizionati}
P\{\max_{0\le s\le t}\mathcal{T}(s)\in \dif \beta \ |\ V(0) = v,\ N(t) = n\} =P\{\min_{0\le s\le t}\mathcal{T}(s)\in -\dif \beta \ |\ V(0) = -v,\ N(t) = n\}.
\end{equation}
By considering (\ref{maxMinCondizionati}) and (\ref{m0t-p_+}) we obtain that
\begin{equation*}\label{maxMinCondizionati2}
P\{\max_{0\le s\le t}\mathcal{T}(s) = \mathcal{T}(t)\in \dif \beta \, |\, V(0) = c,\, N(t) = 2k\} =P\{\min_{0\le s\le t}\mathcal{T}(s) = 0,\, \mathcal{T}(t)\in \dif \beta \, |\, V(0) = c,\, N(t) = 2k\},
\end{equation*}
with $\beta\in (0,ct)$ and $k \in \mathbb{N}$. This formula describes an important relationship between the paths of the telegraph process. An analogous equation can be derived for the case of a negative initial velocity (we must switch the maximum and the minimum and consider $\beta \in (-ct,0)$ ).

\begin{cor}\label{met-pripartizioni}
Let $\lbrace \mathcal{T}(t) \rbrace_{t\ge0}$ be a symmetric telegraph process. Let $k \in \mathbb{N}$, for $\beta \in (0, ct), \\ x\in (2\beta-ct, \beta]$
\begin{equation}\label{met-pripx}
P_{2k}^-\lbrace M(t) \in \dif \beta,\ \mathcal{T}(t) <x \rbrace= P_{2k}^+\lbrace M(t) \in \dif \beta,\ \mathcal{T}(t) <x \rbrace,
\end{equation}
and, for $x \in (-ct, ct)$
\begin{align}P_{2k}^-\lbrace M(t)& \le \beta,\ \mathcal{T}(t) \in \dif x \rbrace\nonumber\\
&=\begin{cases}	\begin{array}{l l} 
P_{2k}^-\lbrace \mathcal{T}(t) \in \dif x \rbrace  -P_{2k}^-\lbrace \mathcal{T}(t) \in 2\beta-\dif x \rbrace, &\ if \ \max\lbrace0,x\rbrace\le\beta<\frac{ct+x}{2}, \\
P_{2k}^-\lbrace \mathcal{T}(t) \in \dif x \rbrace, &\ if\ \beta\ge\frac{ct+x}{2}. 
\end{array} \end{cases}\label{met-pripb}
\end{align}
\end{cor}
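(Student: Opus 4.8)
The plan is to derive both identities from the joint-density coincidence (\ref{met-p}) established in Theorem \ref{teoremamet-p}, together with the negative reflection principle for the telegraph process (\ref{principioRiflessioneNegativoTelegrafo}). First I would obtain (\ref{met-pripx}) by integrating (\ref{met-p}) in the position variable. Since $\{M(t)\in\dif\beta,\ \mathcal{T}(t)<x\}$ lies off the diagonal $\{M(t)=\mathcal{T}(t)\}$ whenever $x\le\beta$, and a trajectory attaining maximum $\beta$ can end no lower than $2\beta-ct$, one has
\[
P_{2k}^-\{M(t)\in\dif\beta,\ \mathcal{T}(t)<x\}=\int_{2\beta-ct}^{x}P_{2k}^-\{M(t)\in\dif\beta,\ \mathcal{T}(t)\in\dif y\}.
\]
By (\ref{met-p}) the integrand equals its positively oriented counterpart for every $y\in(2\beta-ct,\beta)$, so integrating reproduces $P_{2k}^+\{M(t)\in\dif\beta,\ \mathcal{T}(t)<x\}$, which is exactly (\ref{met-pripx}).

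For (\ref{met-pripb}) I would pass to the complement,
\[
P_{2k}^-\{M(t)\le\beta,\ \mathcal{T}(t)\in\dif x\}=P_{2k}^-\{\mathcal{T}(t)\in\dif x\}-P_{2k}^-\{M(t)>\beta,\ \mathcal{T}(t)\in\dif x\},
\]
and evaluate the last term. In the range $\max\{0,x\}\le\beta<\frac{ct+x}{2}$ the constraints $M(t)>\beta\ge0$ and $x\le\beta$ keep the relevant event away both from the atom $\{M(t)=0\}$ carried by the negatively started motion and from the diagonal $\{M(t)=\mathcal{T}(t)\}$; hence integrating (\ref{met-p}) over the values of the maximum above $\beta$ yields $P_{2k}^-\{M(t)>\beta,\ \mathcal{T}(t)\in\dif x\}=P_{2k}^+\{M(t)>\beta,\ \mathcal{T}(t)\in\dif x\}$. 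Applying the negative reflection principle (\ref{principioRiflessioneNegativoTelegrafo}) to the right-hand side rewrites it as $P_{2k}^-\{\mathcal{T}(t)\in2\beta-\dif x\}$, and substituting back gives the first line of (\ref{met-pripb}). When instead $\beta\ge\frac{ct+x}{2}$, the level $2\beta-x$ is at least $ct$, i.e.\ a path ending at $x$ cannot reach a maximum exceeding $\frac{ct+x}{2}\le\beta$ within time $t$; therefore $P_{2k}^-\{M(t)>\beta,\ \mathcal{T}(t)\in\dif x\}=0$ and only $P_{2k}^-\{\mathcal{T}(t)\in\dif x\}$ survives, which is the second line.

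The step requiring the most care is the transfer $P_{2k}^-\{M(t)>\beta,\ \mathcal{T}(t)\in\dif x\}=P_{2k}^+\{M(t)>\beta,\ \mathcal{T}(t)\in\dif x\}$, because (\ref{met-p}) is an equality of the absolutely continuous parts of the two joint laws valid only on $\{w\in(0,ct),\ x\in(2w-ct,w)\}$. I must verify that the range of integration of the maximum, $w\in(\beta,\frac{ct+x}{2})$, lies inside this set and that no singular mass is collected. This is precisely where the hypotheses $\beta\ge\max\{0,x\}$ and $x\in(2\beta-ct,\beta]$ enter: they force $w>\beta\ge0$, excluding the mass at $\{M(t)=0\}$ proper to the negatively oriented motion, and $w>\beta\ge x$, excluding the diagonal mass $\{M(t)=\mathcal{T}(t)\}$ present for both initial velocities. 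Once these exclusions are checked, the remaining manipulations are routine.
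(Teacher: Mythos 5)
Your proposal is correct, and for the first identity (\ref{met-pripx}) it coincides with the paper's proof: both integrate the density equality (\ref{met-p}) in the position variable over $(2\beta-ct,x)$, after noting that the event stays away from the singular components. For the second identity (\ref{met-pripb}) your route is genuinely different in its decomposition. The paper splits into the cases $x<0$ and $x\ge 0$, writes $P_{2k}^-\{M(t)\le\beta,\,\mathcal{T}(t)\in\dif x\}$ directly as the absolutely continuous part plus (when $x<0$) the atom at $\{M(t)=0\}$, and then combines (\ref{met-p}), (\ref{m0t-p}), the reflection identity (\ref{principioRiflessioneNegativoTelegrafo2}) and the auxiliary formula (\ref{met+pint}). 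You instead pass to the complement $\{M(t)>\beta\}$, observe that this event avoids every singular component at once (the mass at $\{M(t)=0\}$ and the diagonal $\{M(t)=\mathcal{T}(t)\}$), transfer to the positively started law by integrating (\ref{met-p}) over $w\in\bigl(\beta,\tfrac{ct+x}{2}\bigr)$, and apply the negative reflection principle (\ref{principioRiflessioneNegativoTelegrafo}) once; the second line of (\ref{met-pripb}) then follows from the a.s.\ bound $M(t)\le\tfrac{ct+\mathcal{T}(t)}{2}$. What your version buys is the elimination of the sign-of-$x$ case split and of the two auxiliary identities; what the paper's version buys is an explicit accounting of how the singular mass at $\{M(t)=0\}$ enters the cumulative law, reusing formulas it had already established. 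One small inaccuracy in your final paragraph, which does not affect validity: for the negatively started motion with an even number of reversals there is in fact no diagonal mass $\{M(t)=\mathcal{T}(t)\}$ at all (the terminal velocity is $-c$, so $\mathcal{T}(t)<M(t)$ a.s.\ on that event); excluding it anyway is merely redundant caution.
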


The reader can notice the incredible fact that (\ref{met-pripb}) and (\ref{leggemax-p}) show that a classical reflection principle holds in the case of a motion that starts moving with a negative velocity and changes direction an even number of times.

\begin{proof}
By means of (\ref{met-p}), we have $P_{2k}^-\lbrace M(t) \in \dif \beta,\ \mathcal{T}(t) < x \rbrace = \int_{2\beta- ct}^x P_{2k}^+\lbrace M(t) \in \dif \beta,\ \mathcal{T}(t) \in \dif y \rbrace$ that proves equation $(\ref{met-pripx})$.

For the second case of result (\ref{met-pripb}) there is nothing to prove. We focus on the first one, where $\max\lbrace0,x\rbrace\le\beta<\frac{ct+x}{2}$. We split it into two cases, $x<0$ and $x\ge0$.
\\Let $x<0$, by means of (\ref{met-p}) and (\ref{m0t-p}), we obtain that
\begin{align*}
P_{2k}^-\lbrace M(t) \le \beta,\ \mathcal{T}(t) \in \dif x \rbrace&= \int_0^\beta P_{2k}^-\lbrace M(t) \in \dif w,\ \mathcal{T}(t) \in \dif x \rbrace  +P_{2k}^-\lbrace M(t) =0,\ \mathcal{T}(t) \in \dif x \rbrace \\
& = P_{2k}^+\lbrace M(t) \le \beta,\ \mathcal{T}(t) \in \dif x \rbrace + \Bigl( P_{2k}^-\lbrace \mathcal{T}(t) \in \dif x \rbrace - P_{2k}^+\lbrace \mathcal{T}(t) \in \dif x \rbrace \Bigr)\\
& =- P_{2k}^-\lbrace \mathcal{T}(t) \in 2\beta -\dif x \rbrace + P_{2k}^-\lbrace \mathcal{T}(t) \in \dif x \rbrace,
\end{align*}
where in the last equation we used the negative reflection principle for the telegraph process (\ref{principioRiflessioneNegativoTelegrafo2}).
\\For $x\ge0$, we have $P_{2k}^-\lbrace M(t) \le \beta,\ \mathcal{T}(t) \in \dif x \rbrace=\int_x^\beta P_{2k}^-\lbrace M(t) \in \dif w,\ \mathcal{T}(t) \in \dif x \rbrace$, and the claimed result follows by considering (\ref{met-p}) and (\ref{met+pint}).
\end{proof}

We now consider the case when $N(t) = 2k+1$ for natural $k\ge0$. In this case, the motion moves with positive speed at time $t$ and therefore it can reach the maximum level also at the end of the time interval, like in the case of an initially positively oriented motion that changes direction an even number of times.

\begin{thm}\label{teoremamet-d}
Let $\lbrace \mathcal{T}(t) \rbrace_{t\ge0}$ be a symmetric telegraph process. Let $k \in \mathbb{N}_0$. For $x \in (-ct, 0]$
\begin{align}\label{m0t-d1}
P_{2k+1}^-\lbrace M(t) =0,\ \mathcal{T}(t) \in \dif x \rbrace&=\binom{2k+1}{k} \frac{(ct-x)^{k-1}(ct+x)^{k}}{(2ct)^{2k+1}}\bigl[ct - (2k+1)x \bigr]\dif x \\
&= P_{2k+1}^-\lbrace \mathcal{T}(t) \in \dif x \rbrace - \frac{(2k+1)!}{(k+1)!(k-1)!} \frac{(ct-x)^{k-1}(ct+x)^{k+1}}{(2ct)^{2k+1}}\dif x. \label{m0t-d}
\end{align}
For $\beta \in (0, ct)$ and $x\in (2\beta-ct, \beta)$ 
\begin{align} P_{2k+1}^-\lbrace M(t) \in \dif \beta,\ \mathcal{T}(t) \in \dif x \rbrace &= \frac{(2k+1)!}{k!(k-1)!} \frac{\bigl[ct-(2\beta-x)\bigr]^{k}\bigl[ct+(2\beta-x)\bigr]^{k-1}}{2^{2k}(ct)^{2k+1}}\dif \beta \dif x\label{met-d}\\
&\ \ \ -  \frac{(2k+1)!}{(k+1)!(k-2)!} \frac{\bigl[ct-(2\beta-x)\bigr]^{k+1}\bigl[ct+(2\beta-x)\bigr]^{k-2}}{2^{2k}(ct)^{2k+1}}\dif \beta \dif x. \nonumber
\end{align}
For $\beta \in (0, ct)$
\begin{align}
 P_{2k+1}^-\lbrace M(t) =\mathcal{T}(t) \in \dif \beta \rbrace& = \binom{2k+1}{k}\frac{(ct-\beta)^{k}(ct+\beta)^{k-1}}{(2ct)^{2k+1}}\bigl[ ct+(2k+1)\beta \bigr] \dif \beta \label{m=t-d} \\
&= P_{2k+1}^-\lbrace \mathcal{T}(t) \in \dif x \rbrace - \frac{(2k+1)!}{(k+1)!(k-1)!} \frac{(ct-x)^{k+1}(ct+x)^{k-1}}{(2ct)^{2k+1}}\dif x.   \nonumber
\end{align}
\end{thm}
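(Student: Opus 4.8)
The plan is to reduce every assertion to the already-proven joint laws of the \emph{positively} oriented motion with an \emph{even} number of reversals, by conditioning on the first Poisson arrival; notably, no induction is needed inside this theorem, since all building blocks refer to the positive even case of Corollary \ref{teoremamet+p}. Indeed, if $V(0)=-c$ and $N(t)=2k+1$, then after the first switch at $T_1=t_1$ the particle sits at $-ct_1$, moves with velocity $+c$, and performs exactly $2k$ further reversals in $[t_1,t]$, so the shifted path $Y(s)=\mathcal{T}(t_1+s)+ct_1$, $s\in[0,t-t_1]$, is a positively oriented telegraph with $2k$ reversals, for which (\ref{met+p}), (\ref{m=t+p}) and the negative reflection principle (\ref{principioRiflessioneNegativoTelegrafo1.1}) are available. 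Writing $M_Y=\max_{0\le s\le t-t_1}Y(s)$ and noting that on $[0,t_1]$ the motion decreases monotonically from $0$ to $-ct_1$, one has the global maximum $M(t)=\max\{0,\,M_Y-ct_1\}$ and the endpoint $\mathcal{T}(t)=Y(t-t_1)-ct_1$. Throughout I would use $P\{T_1\in\dif t_1\mid N(t)=2k+1\}=(2k+1)(t-t_1)^{2k}t^{-(2k+1)}\dif t_1$, the density of the first of $2k+1$ order statistics of the uniform law on $[0,t]$.

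First I would dispatch the base case $k=0$ (a single reversal) by hand: the path goes down to $-ct_1$ and then up, so $\mathcal{T}(t)=c(t-2t_1)$, and the change of variable from the uniform $T_1$ directly reproduces (\ref{m0t-d1}), (\ref{met-d}) (which vanishes, since with one switch the endpoint is itself the maximum), and (\ref{m=t-d}); this also fixes the conventions under which the $(k-1)!$ and $(k-2)!$ factors make the corresponding terms disappear. For $k\ge1$ and the atom $\{M(t)=0,\ \mathcal{T}(t)\in\dif x\}$ with $x\le0$, the event $M(t)=0$ is equivalent to $M_Y\le ct_1$ together with $Y(t-t_1)=x+ct_1$, so the computation mirrors the even case of Theorem \ref{teoremamet-p} verbatim with $2k-1$ replaced by $2k$: I split the $t_1$-integral at $(ct+x)/(2c)$, apply the negative reflection principle (\ref{principioRiflessioneNegativoTelegrafo2}) to the sub-path on the lower range (where the constraint $M_Y\le ct_1$ is active) while on the upper range the constraint is automatic, and recombine the two pieces into $P_{2k+1}^-\{\mathcal{T}(t)\in\dif x\}$ minus a single reflected integral of the form $\int_0^{(ct+x)/(2c)}P_{2k}^-\{\mathcal{T}(t-t_1)\in ct_1-\dif x\}\,P\{T_1\in\dif t_1\mid N(t)=2k+1\}$. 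Evaluating this integral with (\ref{pt+-p}) yields the subtracted term in (\ref{m0t-d}), and collecting powers gives the closed form (\ref{m0t-d1}).

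For the joint density (\ref{met-d}) with $\beta>0$ and $x<\beta$ the same conditioning gives
\begin{equation*}
P_{2k+1}^-\{M(t)\in\dif\beta,\ \mathcal{T}(t)\in\dif x\}=\int_0^{\frac{ct-2\beta+x}{2c}}P_{2k}^+\{M(t-t_1)\in\dif\beta+ct_1,\ \mathcal{T}(t-t_1)\in\dif x+ct_1\}\,P\{T_1\in\dif t_1\mid N(t)=2k+1\},
\end{equation*}
the upper limit being the largest $t_1$ for which the sub-path still has time to reach level $\beta+ct_1$ and return to $x+ct_1$. Substituting the explicit form (\ref{met+pcomoda}) of the positive even joint density together with the $T_1$-density turns this into a sum of elementary integrals in $t_1$; the substitution $u=2ct_1$ reduces them to Beta integrals $\int_0^1 u^a(1-u)^b\dif u$, and after combining the two contributions (an integration by parts on the second, exactly as in Corollary \ref{corollariomet+pripx}) I expect the compact two-term polynomial (\ref{met-d}) to emerge. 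The case $\{M(t)=\mathcal{T}(t)\in\dif\beta\}$ is handled identically, now integrating the single-term density (\ref{m=t+p}) over $t_1\in(0,(ct-\beta)/(2c))$ against the $T_1$-density to obtain (\ref{m=t-d}); alternatively (\ref{m=t-d}) follows from $P_{2k+1}^-\{M=\mathcal{T}\in\dif\beta\}=P_{2k+1}^-\{\mathcal{T}\in\dif\beta\}-P_{2k+1}^-\{M>\beta,\ \mathcal{T}\in\dif\beta\}$ and the joint law just obtained, which also explains the second line displayed in (\ref{m=t-d}).

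The main obstacle is purely computational and is concentrated in (\ref{met-d}): because (\ref{met+pcomoda}) is itself a difference of two terms, after multiplication by $(t-t_1)^{2k}$ and integration one must track several Beta-function contributions and show that the lower-order pieces telescope into the single correction term carrying the factorial $(k+1)!\,(k-2)!$. Care is also needed at the endpoints $k=0,1$, where factorials of negative integers force some terms to vanish, and in verifying that the $x$-integral of (\ref{m0t-d1}) over $(-ct,0]$ returns the atom $\binom{2k+2}{k+1}2^{-2k-2}=P\{M(t)=0\mid V(0)=-c,\ N(t)=2k+1\}$ from \cite{CO2020}, which I would use as a final consistency check.
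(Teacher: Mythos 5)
Your proposal is correct and follows essentially the same route as the paper: dispose of $k=0$ by hand, then for $k\ge 1$ condition on the first Poisson arrival $T_1=t_1$ (with the order-statistics density) and reduce each part to the known positive-velocity results — the split-and-reflect argument of Theorem \ref{teoremamet-p} for (\ref{m0t-d1})--(\ref{m0t-d}), the integral of (\ref{met+p})/(\ref{met+pcomoda}) over $t_1\in\bigl(0,\frac{ct-2\beta+x}{2c}\bigr)$ for (\ref{met-d}), and the integral of (\ref{m=t+p}) over $t_1\in\bigl(0,\frac{ct-\beta}{2c}\bigr)$ for (\ref{m=t-d}) — which is exactly the paper's proof (stated there mostly by reference to the even case). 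Your final consistency check also agrees with the paper, since $\binom{2k+2}{k+1}2^{-2k-2}=\binom{2k+1}{k}2^{-(2k+1)}$.
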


\begin{proof}
First of all we prove the theorem for the particular case of $k=0$, i.e. \\$N(t) = 1$. In this case, the joint density (\ref{met-d}) is identically null. On the other hand, we have that the maximum is $0$ if $\mathcal{T}_1^-(t) \le 0$ and therefore equation (\ref{m0t-d}) reduces to the first term. Finally, if the position of the motion at time $t$ is greater than $0$, then the maximum coincides with $\mathcal{T}(t)$ and equation (\ref{m=t-d}) reads $P_1^-\lbrace M(t) = \mathcal{T}(t) \in \dif \beta \rbrace = P_1^-\lbrace M(t) \in \dif \beta \rbrace =P_1^- \lbrace \mathcal{T}(t)  \in \dif \beta \rbrace =\dif \beta/(2ct)$.

For $k\in \mathbb{N}$ the proof of (\ref{m0t-d1}) works like the proof of (\ref{m0t-p}) in Theorem \ref{teoremamet-p} and therefore it is omitted.

The joint density (\ref{met-d}) follows by applying the same arguments we used in the proof of density (\ref{met-p}).

We conclude by showing formula (\ref{m=t-d}). For $\beta \in (0, ct)$,
$$  P_{2k+1}^-\lbrace M(t) =\mathcal{T}(t) \in \dif \beta \rbrace =\int_0^{\frac{ct-\beta}{2c}}  P_{2k}^+\lbrace M(t-t_1) =\mathcal{T}(t-t_1) \in \dif \beta +ct_1\rbrace P\lbrace T_1 \in \dif t_1 \, |\, N(t) = 2k+1\rbrace,$$
where the integration set is obtained by considering that the first probability in the integral is $0$ if $c(t-t_1) < \beta+ ct_1$. The claimed result follows by using (\ref{m=t+p}).
\end{proof}

\begin{rem}
Let integer $k\ge0$ and $x \in (-ct, 0]$, by means of (\ref{m0t-d}) we obtain
\begin{equation*}\label{relazioneParticolarem=0ripx}
  P_{2k+1}^-\lbrace M(t) = 0 ,\ \mathcal{T}(t) \le x  \rbrace=   \frac{2ctP_{2k+2}^+\lbrace\mathcal{T}(t)  \in \dif x \rbrace}{(2k+2) \dif x} =\frac{(2k+1)!}{(k+1)!k!}\frac{(c^2t^2-x^2)^k(ct+x)}{(2ct)^{2k+1}},
\end{equation*}
which resembles the relationship (\ref{m0t-prip}) in the case of an even number of changes of direction. Furthermore, we immediately verify that
\begin{equation}\label{relazioneM0tEm=0}
P_{2k+1}^-\lbrace M(t) = 0 ,\ \mathcal{T}(t) \le 0  \rbrace  =  \binom{2k+1}{k}\frac{1}{2^{2k+1}} = P_{2k+1}^-\lbrace M(t) =0\rbrace,
\end{equation}
that is the probability that an initially negatively oriented motion, when $2k+1$ changes of direction occur before time $t$, does not cross the starting level $x=0$ in the time interval $[0,t]$. Incredibly, this probability mass is independent of both $t$ and $c$.
\hfill $\diamond$
\end{rem}

\begin{rem}
We point out that taking into account formulas (\ref{m0t-d1}) and (\ref{m=t-d}), for $\beta\in (0,ct)$
\begin{equation}\label{relazione_m=t_m0t-d}
 P_{2k+1}^-\lbrace M(t) =\mathcal{T}(t) \in \dif \beta \rbrace =  P_{2k+1}^-\lbrace M(t) =0,\ \mathcal{T}(t) \in -\dif \beta \rbrace.
\end{equation}
Therefore, by considering equation (\ref{relazioneM0tEm=0}), we obtain that
\begin{equation}
P_{2k+1}^-\lbrace M(t) =\mathcal{T}(t) \rbrace = \binom{2k+1}{k}\frac{1}{2^{2k+1}} =  P_{2k+1}^-\lbrace M(t) =0 \rbrace.
\end{equation}
This equality suggests an interesting connection between the samples of the sets in the first and the third members, concerning the initially negatively oriented motion when an odd number of reversals occur.
\hfill $\diamond$
\end{rem}

\begin{cor}\label{met-dripartizioni}
Let $\lbrace \mathcal{T}(t) \rbrace_{t\ge0}$ be a symmetric telegraph process. Let $k \in \mathbb{N}_0$, for $\beta \in (0, ct), \ x\in (2\beta-ct, \beta]$
\begin{equation}\label{met-dripx}
P_{2k+1}^-\lbrace M(t) \in \dif \beta,\ \mathcal{T}(t) <x \rbrace\ = \frac{(2k+1)!}{(k+1)!(k-1)!} \frac{\bigl[ct-(2\beta -x) \bigr]^{k+1} \bigl[ct +(2\beta-x) \bigr]^{k-1}}{2^{2k}(ct)^{2k+1}}\dif \beta
\end{equation}
and, for $x \in (-ct, ct)$
\begin{align}
P&_{2k+1}^-\lbrace M(t) \le \beta,\ \mathcal{T}(t) \in \dif x \rbrace \label{met-dripb}\\
&=\bigg \lbrace\begin{array}{l l} 
	P_{2k+1}^-\lbrace \mathcal{T}(t) \in \dif x \rbrace  -  \frac{(2k+1)!}{(k+1)!(k-1)!}\frac{\bigl[ct-(2\beta -x) \bigr]^{k+1}\bigl[ct+(2\beta -x) \bigr]^{k-1}}{(2ct)^{2k+1}}\dif x&\ if \ \max\lbrace0,x\rbrace\le\beta<\frac{ct+x}{2} \\
	P_{2k+1}^-\lbrace \mathcal{T}(t) \in \dif x \rbrace &\ if\ \beta\ge\frac{ct+x}{2} \\
	\end{array} \nonumber
\end{align}
\end{cor}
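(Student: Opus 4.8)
The plan is to read both identities off the joint densities supplied by Theorem~\ref{teoremamet-d}, proceeding in close analogy with the positively oriented motion treated in Corollary~\ref{corollariomet+pripx}. Throughout I take $k\ge 1$; the boundary value $k=0$ (that is $N(t)=1$) is read directly from Theorem~\ref{teoremamet-d}, where the joint density (\ref{met-d}) is identically zero, so that (\ref{met-dripx}) is trivially null and (\ref{met-dripb}) reduces to $P_{1}^-\lbrace \mathcal{T}(t)\in\dif x\rbrace$ (consistently, the constant $\tfrac{(2k+1)!}{(k+1)!(k-1)!}$ vanishes at $k=0$).

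First I would establish (\ref{met-dripx}) as the cumulative-in-$\mathcal{T}$ version of the joint density,
\[
P_{2k+1}^-\lbrace M(t)\in \dif\beta,\ \mathcal{T}(t)<x\rbrace = \int_{2\beta-ct}^{x} P_{2k+1}^-\lbrace M(t)\in \dif\beta,\ \mathcal{T}(t)\in \dif y\rbrace,
\]
the lower limit $2\beta-ct$ being the least terminal value compatible with $M(t)=\beta$. Rather than integrate head-on, I would verify by the fundamental theorem of calculus that the right-hand side of (\ref{met-dripx}) is the sought primitive: its $x$-derivative reproduces the integrand (\ref{met-d}) once one matches the constants $\tfrac{(2k+1)!}{(k+1)!(k-1)!}(k+1)=\tfrac{(2k+1)!}{k!(k-1)!}$ and $\tfrac{(2k+1)!}{(k+1)!(k-1)!}(k-1)=\tfrac{(2k+1)!}{(k+1)!(k-2)!}$, while at the lower endpoint $x=2\beta-ct$ the factor $[ct-(2\beta-x)]^{k+1}$ kills the boundary term. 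This is exactly the cancellation already used in Corollary~\ref{corollariomet+pripx}.

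For (\ref{met-dripb}) the second case is immediate: if $\beta\ge(ct+x)/2$ then $2\beta-x\ge ct$ lies outside the support $x\in(2\beta-ct,\beta)$ of (\ref{met-d}), so given $\mathcal{T}(t)\in\dif x$ the maximum cannot exceed $\beta$ and the probability equals $P_{2k+1}^-\lbrace \mathcal{T}(t)\in\dif x\rbrace$. For the first case, $\max\lbrace 0,x\rbrace\le\beta<(ct+x)/2$, I would pass to the complement,
\[
P_{2k+1}^-\lbrace M(t)\le\beta,\ \mathcal{T}(t)\in\dif x\rbrace = P_{2k+1}^-\lbrace \mathcal{T}(t)\in\dif x\rbrace - \int_{\beta}^{(ct+x)/2} P_{2k+1}^-\lbrace M(t)\in\dif w,\ \mathcal{T}(t)\in\dif x\rbrace,
\]
the integral expressing $P_{2k+1}^-\lbrace M(t)>\beta,\ \mathcal{T}(t)\in\dif x\rbrace$. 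Under $s=2w-x$ this last integral is precisely one half of the one already evaluated for (\ref{met-dripx}) — the integrand is the same function of the single variable $2w-x$ and $\dif w=\dif s/2$ — so it equals half the right-hand side of (\ref{met-dripx}) with $\dif\beta$ replaced by $\dif x$; since $(2ct)^{2k+1}=2\cdot 2^{2k}(ct)^{2k+1}$, this reproduces the subtracted term in (\ref{met-dripb}).

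The one point genuinely demanding care — and what distinguishes this case from the even negatively oriented motion of Corollary~\ref{met-pripartizioni} — is the atomic structure of the law of $(M(t),\mathcal{T}(t))$. Since $N(t)=2k+1$ forces terminal velocity $+c$, the maximum may be attained at time $t$, so the pair carries an atom at $M(t)=\mathcal{T}(t)=x$ for $x\ge 0$ (formula (\ref{m=t-d})) besides the atom at $M(t)=0$ for $x<0$ (formula (\ref{m0t-d})). The step legitimising the unified treatment of both signs of $x$ is the observation that, throughout the regime $\beta\ge\max\lbrace 0,x\rbrace$, both atoms sit at a level $\le\beta$ and hence contribute nothing to $\lbrace M(t)>\beta\rbrace$; only the continuous density enters, and the whole argument collapses onto the single elementary integral already performed for (\ref{met-dripx}).
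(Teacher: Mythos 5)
Your proof is correct, and for the main part of the corollary it takes a genuinely different route from the paper's. For (\ref{met-dripx}) the two arguments coincide in substance: both integrate the density (\ref{met-d}) in the terminal variable over $(2\beta-ct,x)$ (the paper by parts, following Corollary \ref{corollariomet+pripx}; you by exhibiting the primitive and checking it via the constant identities $\tfrac{(2k+1)!}{(k+1)!(k-1)!}(k+1)=\tfrac{(2k+1)!}{k!(k-1)!}$ and $\tfrac{(2k+1)!}{(k+1)!(k-1)!}(k-1)=\tfrac{(2k+1)!}{(k+1)!(k-2)!}$). For the first case of (\ref{met-dripb}), however, the paper works \emph{from below}: it splits into $x<0$ and $x\ge 0$, writes $P_{2k+1}^-\lbrace M(t)\le\beta,\,\mathcal{T}(t)\in\dif x\rbrace$ as the relevant atom plus $\int_0^\beta$ (resp. $\int_x^\beta$) of the density (\ref{met-d}), and therefore must invoke the explicit atom formulas (\ref{m0t-d}) and (\ref{m=t-d}) and recombine them with the integrated density. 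You instead work \emph{from above}, passing to the complement $P_{2k+1}^-\lbrace M(t)>\beta,\,\mathcal{T}(t)\in\dif x\rbrace=\int_\beta^{(ct+x)/2}$ of the density, observing that on $\lbrace M(t)>\beta\rbrace$ no singular component can contribute since both atoms sit at levels $0$ and $x$, each $\le\beta$ in the regime $\beta\ge\max\lbrace 0,x\rbrace$. This buys you three things: the $x<0$ and $x\ge0$ cases are treated uniformly, the explicit atom densities are never needed (only their locations), and the integral reduces via $s=2w-x$ to exactly half of the computation already done for (\ref{met-dripx}), the factor $\tfrac12$ being absorbed by $(2ct)^{2k+1}=2\cdot 2^{2k}(ct)^{2k+1}$. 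What the paper's longer route buys in exchange is an implicit consistency check: it verifies that the three components of Theorem \ref{teoremamet-d} (atom at $0$, continuous density, atom at $M(t)=\mathcal{T}(t)$) actually reassemble into $P_{2k+1}^-\lbrace\mathcal{T}(t)\in\dif x\rbrace$ below level $\beta$, whereas your argument takes the exhaustiveness of that decomposition as given. Both proofs rest on that same structural fact, so this is a difference of emphasis, not of rigor; your handling of the $k=0$ and $k=1$ degeneracies via vanishing binomial constants is also sound.
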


\begin{proof}
The proof of probability (\ref{met-dripx}) follows by using formula (\ref{met-d}) and by proceeding as we showed in the proof of Corollary \ref{corollariomet+pripx}.

For the second case of result (\ref{met-dripb}) there is nothing to prove. We focus on the first one, where $\max\lbrace0,x\rbrace\le\beta<\frac{ct+x}{2} $. We consider two cases, $x<0$ and $x\ge0$.
\\Let $x<0$, by using (\ref{met-d}), some calculation yield
\begin{align}
 \int_0^\beta P_{2k+1}^-\lbrace &M(t) \in \dif w,\ \mathcal{T}(t) \in \dif x \rbrace\label{met-dripb_integrale1}\\
&=\frac{(2k+1)!}{(k+1)!(k-1)!}\Biggl[ \frac{(ct-x)^{k-1} (ct+x)^{k+1}}{(2ct)^{2k+1}}- \frac{\bigl[ct-(2\beta-x) \bigr]^{k+1} \bigl[ct+(2\beta-x) \bigr]^{k-1}}{(2ct)^{2k+1}} \Biggr]\dif x.\nonumber
\end{align}
Now, we have
$$P_{2k+1}^-\lbrace M(t) \le \beta,\ \mathcal{T}(t) \in \dif x \rbrace = P_{2k+1}^-\lbrace M(t) =0,\ \mathcal{T}(t) \in \dif x \rbrace  +\int_0^\beta P_{2k+1}^-\lbrace M(t) \in \dif w,\ \mathcal{T}(t) \in \dif x \rbrace$$
and the first equation in (\ref{met-dripb}) follows by using (\ref{m0t-d}) and (\ref{met-dripb_integrale1}).
\\For $x\ge0$, we have
$$P_{2k+1}^-\lbrace M(t) \le \beta,\ \mathcal{T}(t) \in \dif x \rbrace =P_{2k+1}^-\lbrace M(t) =\mathcal{T}(t) \in \dif x \rbrace +  \int_x^\beta P_{2k+1}^-\lbrace M(t) \in \dif w,\ \mathcal{T}(t) \in \dif x \rbrace.$$
The second term follows by means of (\ref{met-d}). By also considering (\ref{m=t-d}) we obtain (\ref{met-dripb}).
\end{proof}

By applying (\ref{m=t-d}) and (\ref{met-dripx}) it is easy to check that
\begin{align*}
P_{2k+1}^-\lbrace M(t)& \in \dif \beta,\ \mathcal{T}(t) <\beta \rbrace + P_{2k+1}^-\lbrace M(t) = \mathcal{T}(t) \in \dif \beta \rbrace \\
& =\binom{2k+1}{k} \frac{(ct-\beta)^{k} (ct+\beta)^{k-1}}{(2ct)^{2k+1}}\bigl[(2k+1)ct+\beta \bigr]\dif \beta =  P_{2k+1}^-\lbrace M(t) \in \dif \beta \rbrace
\end{align*}
which coincides with the result of Theorem 4.2 of \cite{CO2020}.
\\

We conclude this section by showing the distribution of the telegraph process and its maximum conditioned on a negative starting velocity.

\begin{thm}
Let $\{\mathcal{T}(t)\}_{t\ge0}$ be a symmetric telegraph process. Let $x\in (-ct,0]$
\begin{align}
P\{M(t) &=0,\ \mathcal{T}(t)\in \dif x\ |\ V(0)=-c\}\label{m0t-}\\
&=e^{-\lambda t}\Biggl[ \frac{-\lambda x}{c(ct-x)}\,I_0\Bigl(\frac{\lambda}{c}\sqrt{c^2t^2-x^2} \Bigr) + \frac{1}{\sqrt{c^2t^2-x^2}}\biggl(\frac{ct+x}{ct-x}-\frac{\lambda x}{c} \biggr)I_1\Bigl(\frac{\lambda}{c}\sqrt{c^2t^2-x^2} \Bigr)\Biggr]\dif x,\nonumber
\end{align}
and
\begin{equation}\label{m0t=-ct-}
P\{M(t) =0,\ \mathcal{T}(t)=-ct\ |\ V(0)=-c\}= e^{-\lambda t}.
\end{equation}
For $\beta \in (0,ct), \ x\in(2\beta-ct,\beta)$
\begin{align}
P\{M(t) \in \dif \beta&,\ \mathcal{T}(t)\in \dif x\ |\ V(0)=-c\}\label{met-}\\
& =\ \frac{\lambda^2\,e^{-\lambda t}}{2c^2}\, \Biggl[\ \sum_{j=0}^1\biggl(\sqrt{\frac{ct-(2\beta-x)}{ct+(2\beta-x)}}\biggr)^j\, I_j\Bigl(\frac{\lambda}{c}\sqrt{c^2t^2-(2\beta-x)^2}\Bigr)\nonumber\\
&\ \ \ - \sum_{j=2}^3\biggl(\sqrt{\frac{ct-(2\beta-x)}{ct+(2\beta-x)}}\biggr)^j\, I_j\Bigl(\frac{\lambda}{c}\sqrt{c^2t^2-(2\beta-x)^2}\Bigr)\Biggr] \dif \beta \dif x,\nonumber
\end{align}
and
\begin{equation}\label{m=t-}
P\{M(t) = \mathcal{T}(t)\in \dif \beta\ |\ V(0)=-c\} = \frac{e^{-\lambda t}}{ct+\beta}\Biggl[ \frac{\lambda \beta}{c}\,I_0\Bigl(\frac{\lambda}{c}\sqrt{c^2t^2-\beta^2}\Bigr)+\sqrt{\frac{ct-\beta}{ct+\beta}}\,I_1\Bigl(\frac{\lambda}{c}\sqrt{c^2t^2-\beta^2}\Bigr) \Biggr]\dif \beta.
\end{equation}
\end{thm}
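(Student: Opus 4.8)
The plan is to obtain each of the four formulas by summing the corresponding conditional law over the number of changes of direction, weighted by the Poisson distribution of $N(t)$. Since $N(t)$ is independent of $V(0)$, for any path event $A$ one has, with the notation (\ref{notazionemassimo&telegrafo}),
\begin{equation*}
P\{A \mid V(0)=-c\} = \sum_{n=0}^\infty P_n^-\{A\}\,\frac{e^{-\lambda t}(\lambda t)^n}{n!}.
\end{equation*}
The term $n=0$ is special: with no reversals the particle travels straight to $-ct$, so it contributes the singular mass $P\{M(t)=0,\mathcal{T}(t)=-ct\mid V(0)=-c\}=e^{-\lambda t}$, which is exactly (\ref{m0t=-ct-}), and contributes nothing to the absolutely continuous parts. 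For $n\ge 1$ I would split the sum into its even part $n=2k$ and its odd part $n=2k+1$.

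For the flat component (\ref{m0t-}) I would insert the even law $P_{2k}^-\{M(t)=0,\mathcal{T}(t)\in\dif x\}$ from (\ref{m0t-p}) --- which via (\ref{pt+-p}) equals a multiple of $(c^2t^2-x^2)^{k-1}(-2x)$ --- together with the odd law (\ref{m0t-d}) expressed through (\ref{ptd}). After cancelling the factorials against $1/n!$ and pulling out $(\lambda/2c)^{n}$, each sum becomes a power series in $(c^2t^2-x^2)$ which, upon the substitution $z=\tfrac{\lambda}{c}\sqrt{c^2t^2-x^2}$, matches term by term the series definition of the $I_r$: the even sum yields an $I_1$, while the odd sum yields an $I_0$ together with an $I_2$. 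The same scheme handles (\ref{met-}) from the even density (\ref{met-p})$=$(\ref{met+p}) and the odd density (\ref{met-d}), and (\ref{m=t-}) from the odd diagonal law (\ref{m=t-d}), where splitting the bracket $ct+(2k+1)\beta$ into $(ct+\beta)+2k\beta$ produces one $I_1$ and one $I_2$.

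The step that turns these raw sums into the stated expressions is the Bessel recurrence (\ref{relazioneBessel}): writing $I_2=I_0-\tfrac{2}{z}I_1$ collapses the $I_0$/$I_2$ pair of (\ref{m0t-}) into the announced $I_0$, $I_1$ combination, and likewise converts the $I_1$/$I_2$ pair of (\ref{m=t-}) into its $I_0$, $I_1$ form; for (\ref{met-}) the even and odd sums combine into the symmetric $I_0$--$I_3$ expression, exactly as in the positively oriented computation (\ref{met+Dispari})--(\ref{met+Pari2}).

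The main obstacle is the bookkeeping. One must track the index shifts $j=k-1$ (and $j=k-2$) carefully, confirm that the low-order terms in which $(k-1)!$ or $(k-2)!$ appear vanish by the convention $1/(-m)!=0$ so that the summations begin at the correct index, and verify that the algebraic factors such as $(ct\pm x)$ recombine against $c^2t^2-x^2=(ct-x)(ct+x)$ to reproduce the coefficients $\tfrac{ct+x}{ct-x}$, $\sqrt{\tfrac{ct-\beta}{ct+\beta}}$, and the analogous ratios in (\ref{met-}). No single step is deep, but the matching of coefficients after applying the recurrence is where an error is most likely to arise.
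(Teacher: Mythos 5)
Your proposal is correct and follows essentially the same route as the paper: a Poisson-weighted sum over $N(t)$ split by parity, inserting the conditional laws (\ref{m0t-p}), (\ref{m0t-d}), (\ref{met-p})$=$(\ref{met+p}) and (\ref{met-d}), recognizing the resulting power series as modified Bessel functions, and collapsing the $I_0/I_2$ (resp.\ $I_1/I_2$) pairs via the recurrence (\ref{relazioneBessel}), with the $n=0$ term giving the singular mass (\ref{m0t=-ct-}). The only, immaterial, divergence is at (\ref{m=t-}): the paper shortcuts through the symmetry relation (\ref{relazione_m=t_m0t-d}) and reuses the already-computed odd-parity expression (\ref{m0t-Dispari2}) at $-\beta$, whereas you resum (\ref{m=t-d}) directly by splitting $ct+(2k+1)\beta=(ct+\beta)+2k\beta$ — both yield the stated formula.
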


\begin{proof}
Let $x\in (-ct,0]$. By means of (\ref{m0t-p}) we obtain that
\begin{equation}\label{m0t-Pari}
P\{M(t) =0,\ \mathcal{T}(t)\in \dif x,\ N(t)\ \text{even}\ |\ V(0)=-c\}=\frac{-\lambda x\,e^{-\lambda t}}{\sqrt{c^2t^2-x^2}}\,I_1\Bigl(\frac{\lambda}{c}\sqrt{c^2t^2-x^2}\Bigr) \dif x .
\end{equation}
By using (\ref{m0t-d}) and the recurrence relationship for the Bessel function (\ref{relazioneBessel}) with $n=1$, we have
\begin{align}
P\{M(t) =0,&\ \mathcal{T}(t)\in \dif x,\  N(t)\ \text{odd}\ |\ V(0)=-c\}\nonumber\\
&= \frac{e^{-\lambda t}}{ct-x}\Biggl[ \frac{-\lambda x}{c}\,I_0\Bigl(\frac{\lambda}{c}\sqrt{c^2t^2-x^2}\Bigr)+\sqrt{\frac{ct+x}{ct-x}}\,I_1\Bigl(\frac{\lambda}{c}\sqrt{c^2t^2-x^2}\Bigr) \Biggr]\dif x.\label{m0t-Dispari2}
\end{align}
\\By summing up (\ref{m0t-Pari}) and (\ref{m0t-Dispari2}) we obtain the claimed result (\ref{m0t-}).

The probability mass (\ref{m0t=-ct-}) coincides with the probability that no Poisson events occur in $[0,t]$.

To prove the joint density (\ref{met-}) we consider the following distributions. By means of (\ref{met-p}) and (\ref{met+pcomoda}) we obtain that
\begin{align}
P\{M&(t) \in \dif \beta,\ \mathcal{T}(t)\in \dif x,\ N(t)\ \text{even}\ |\ V(0)=-c\}\label{met-Pari}\\
& = \frac{\lambda^2\,e^{-\lambda t}}{2c^2}\,\Biggl[\ I_0\Bigl(\frac{\lambda}{c}\sqrt{c^2t^2-(2\beta-x)^2}\Bigr) - \frac{ct-(2\beta-x)}{ct+(2\beta-x)}\,I_2\Bigl(\frac{\lambda}{c}\sqrt{c^2t^2-(2\beta-x)^2}\Bigr)\Biggr] \dif \beta \dif x.\nonumber
\end{align}
and by means of (\ref{met-d}) we have that
\begin{align}
P\{M(t) \in& \dif \beta,\ \mathcal{T}(t)\in \dif x,\ N(t)\ \text{odd}\ |\ V(0)=-c\}\label{met-Dispari}\\
&= \frac{\lambda^2\,e^{-\lambda t}}{2c^2}\ \sum_{j=0}^1(-1)^j\biggl(\sqrt{\frac{ct-(2\beta-x)}{ct+(2\beta-x)}}\biggr)^{2j+1}\, I_{2j+1}\Bigl(\frac{\lambda}{c}\sqrt{c^2t^2-(2\beta-x)^2}\Bigr) \dif \beta \dif x.\nonumber
\end{align}
\\By summing up (\ref{met-Pari}) and (\ref{met-Dispari}) we obtain the distribution (\ref{met-}).

Finally, to prove (\ref{m=t-}) we just need to consider equation (\ref{relazione_m=t_m0t-d}) and probability (\ref{m0t-Dispari2}).
\end{proof}

\begin{rem}
From the distributions we displayed in this section, the interested reader can obtain some interesting conditional and unconditional probabilities. For instance it is easy to study the distribution of the maximum conditioned on the position of the process at the end of the time interval, that is the maximum of a telegraph bridge. It is also easy to compute the probability law of the telegraph process in presence of an absorbing barrier and the distribution of some kind of the telegraph meander.\hfill $\diamond$
\end{rem}

%%%%%%%%%%%%%%%%%% SEZIONE 2 %%%%%%%%%%%%%%%%

\section{On the random times of the symmetric telegraph process}

In this section we focus on the conditional and unconditional distributions of the first passage time and the returning time to the origin of the telegraph process.

\begin{rem}
We show the following interesting relationship between the time derivative of the cumulative distribution functions of the maximum of the telegraph process and its density, when we know the starting speed $V(0)$ and the number of changes of direction $N(t)$. For $0<\frac{\beta}{c}<t, \ n\in \mathbb{N}_0$
\begin{equation}\label{derivatamax}
-\frac{\partial }{\partial t}P^\pm_n\{ M(t) \le\beta \}= \frac{\beta}{t\,\dif \beta}P_n^\pm\{ M(t)\in \dif \beta\}.
\end{equation}

In the case of $V(0) = +c$, from Corollary 5.1 of \cite{CO2020}, we have that, for natural $k\ge 0,\ \beta \in [0,ct]$
\begin{equation}\label{rip+paridispari}
P^+_{2k+1}\{ M(t) \le\beta \} =P^+_{2k+2}\{ M(t) \le\beta \} = \frac{\beta}{ct}\sum_{j=0}^k \binom{2j}{j}\frac{(c^2t^2-\beta^2)^j}{(2ct)^{2j}}
\end{equation}
and we point out that (\ref{leggemax+d2}) has the same cyclic behavior displayed by equation (\ref{rip+paridispari}).
\\Now, by suitably adapting (\ref{leggemax+d2}) we have
\begin{align}
 -&\frac{\partial }{\partial t}P^+_{2k+1}\{ M(t) \le\beta \} = -\frac{\partial }{\partial t}P^+_{2k+2}\{ M(t) \le\beta \} =-\frac{\partial }{\partial t}\int_0^\beta \frac{2(2k+1)!}{k!^2}\frac{(c^2t^2-w^2)^k}{(2ct)^{2k+1}}\dif w\nonumber\\
&= - \frac{2(2k+1)!}{k!^2\: 2^{2k+1}}\,\frac{\partial }{\partial t}\int^{\frac{\beta}{ct}}_0 (1-y^2)^k\dif y  = \frac{2(2k+1)!}{k!^2\: 2^{2k+1}} \, \frac{\beta}{ct^2}\Bigl(1-\frac{\beta^2}{c^2t^2}\Bigr)^k=\frac{\beta}{t\,\dif \beta}P_{2k+1}^+\{ M(t)\in \dif \beta\}\label{dermax+}
\end{align}
which concludes the proof when $V(0) = c$.

In the case of a negative starting speed, $V(0) = -c$, by means of (\ref{rip+paridispari}) and Corollary 5.2 of \cite{CO2020} we can write the following relationship for the cumulative distribution function, for $\beta\in [0,ct],\ k \in \mathbb{N}$
\begin{equation}\label{massimo-prip}
P_{2k}^-\lbrace M(t) \le \beta\rbrace\ =\ P_{2k}^+\lbrace M(t) \le \beta\rbrace +  \binom{2k}{k} \frac{(c^2t^2-\beta^2)^k }{(2ct)^{2k}}.
\end{equation}
Now, in light of result (\ref{dermax+}) and by means of (\ref{massimo-prip}), we can write
\begin{align}
-\frac{\partial }{\partial t}P^-_{2k}\{ M(t) \le\beta \} &=\frac{\beta}{t\,\dif \beta}P^+_{2k}\{ M(t) \in \dif \beta \}  - \binom{2k}{k}\frac{(2k)(c^2t^2-\beta^2)^{k-1}}{(2ct)^{2k}\,t}\beta^2\nonumber\\
&=  \frac{2(2k)!}{k!(k-1)!} \frac{(c^2t^2-\beta^2)^{k-1}(ct-\beta)}{2^{2k}(ct)^{2k}}\frac{\beta}{t} = \frac{\beta}{t\,\dif \beta}P^-_{2k}\{ M(t) \in \dif \beta \}. \label{dermax-p}
\end{align} 
For the odd case, from \cite{CO2020} we can write
\begin{equation}\label{massimo-drip}
P_{2k+1}^-\lbrace M(t) \le \beta\rbrace\ =\ \frac{2k+1}{2k+2} P_{2k}^-\lbrace M(t) \le \beta\rbrace + \frac{1}{2k+2} P_{2k+1}^+\lbrace M(t) \le \beta\rbrace 
\end{equation}
and (\ref{derivatamax}) follows by using results (\ref{dermax+}) and (\ref{dermax-p}).\hfill$\diamond$
\end{rem}

It is interesting to observe that relationship (\ref{derivatamax}) is typical of self-similar processes. However, the telegraph process is not self-similar in the classical sense, but it enjoys a kind of quasi-similarity property with $H=1$. In fact, the time-scaled process $\{\mathcal{T}(at)\}_{t\ge0}$, with $a>0$, can be explicitly written as
$$ \mathcal{T}(at)=V(0)\int_0^{at}(-1)^{N(s)}\dif s=aV(0)\int_0^t(-1)^{N(as)}\dif s, $$
that is a scaled telegraph motion, but with rate $a\lambda>0$.

\subsection{First passage time: motion starting with positive velocity}

Let $F_\beta = \inf\{s\ge 0\: : \: \mathcal{T}(s) \ge \beta\}$ the first passage time of the symmetric telegraph process across level $\beta$. Note that for $\beta>0$ and natural $n$
\begin{equation}\label{relazioneTempiBetaV}
P\{F_\beta\in \dif s\ |\ V(0)=c,\ N(t) = n\} =P\{F_{-\beta}\in \dif s\ |\ V(0)=-c,\ N(t) = n\}
\end{equation}
and therefore we limit ourselves to study the first passage time across a level $\beta>0$ for both the initial possible velocities.

It is clear that the telegraph process starting with positive velocity can hit level $\beta>0$ at time $t\ge \frac{\beta}{c}$ and only if $N(t)$ is even. In particular, if $N(t)=0$, the first passage through $\beta$ occurs at time $\frac{\beta}{c}$ with probability one. In the following theorem we study the distribution of the first passage time, by assuming the number of changes of direction. Also in this case, $T_\beta\ge\beta/c\ a.s.$

\begin{thm}\label{teorematppN+}
Let $F_\beta$ be the first passage time of a symmetric telegraph process across level $\beta>0$. Let $0<\frac{\beta}{c}<s\le t$. For $k \in \mathbb{N}$
\begin{equation}\label{tppN+}
P\{F_\beta \in \dif s\ |\ V(0)=c,\ N(t) = 2k\} = \frac{(2k)!}{t^{2k}}\beta \sum_{j=1}^k \frac{(t-s)^{2k-2j}}{j!(j-1)!(2k-2j)!}\frac{(c^2s^2-\beta^2)^{j-1}}{(2c)^{2j-1}}\dif s
\end{equation}
and
\begin{equation*}\label{tppN+sing}
P\Big\{F_\beta = \frac{\beta}{c}\ \Big|\ V(0)=c,\ N(t) = 2k\Big\} = \Bigl(1-\frac{\beta}{ct} \Bigr)^{2k}.
\end{equation*}
For $k \in \mathbb{N}_0$
\begin{equation}\label{tppNdispari+}
P\{F_\beta \in \dif s\ |\ V(0)=c,\ N(t) = 2k+1\} = \frac{(2k+1)!}{t^{2k+1}}\beta \sum_{j=1}^k \frac{(t-s)^{2k+1-2j}}{j!(j-1)!(2k+1-2j)!}\frac{(c^2s^2-\beta^2)^{j-1}}{(2c)^{2j-1}}\dif s
\end{equation}
and
\begin{equation*}\label{tppNdispari+sing}
P\Big\{F_\beta = \frac{\beta}{c}\ \Big|\ V(0)=c,\ N(t) = 2k+1\Big\} = \Bigl(1-\frac{\beta}{ct} \Bigr)^{2k+1}.
\end{equation*}
\end{thm}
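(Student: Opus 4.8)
The plan is to exploit the elementary duality between the first passage time and the running maximum, namely $\{F_\beta > s\} = \{\max_{0\le u\le s}\mathcal{T}(u) < \beta\}$, and then to feed in the conditional laws of the maximum recalled from \cite{CO2020}. Writing $M(s)=\max_{0\le u\le s}\mathcal{T}(u)$, the event $\{M(s)<\beta\}$ depends only on the trajectory on $[0,s]$, so the conditioning must be handled first: given $V(0)=c$ and $N(t)=2k$, the $2k$ reversal epochs are distributed as the order statistics of $2k$ i.i.d.\ uniforms on $[0,t]$, hence the number of them in $[0,s]$ is Binomial$(2k,s/t)$ and, conditionally on that number being $m$, those epochs are i.i.d.\ uniform on $[0,s]$ and independent of the path on $(s,t]$. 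Conditioning on $N(s)=m$ therefore yields
\begin{equation*}
P\{F_\beta > s\mid V(0)=c,\ N(t)=2k\}
= \sum_{m=0}^{2k}\binom{2k}{m}\Bigl(\frac{s}{t}\Bigr)^{m}\Bigl(1-\frac{s}{t}\Bigr)^{2k-m}\,
P\{M(s)<\beta\mid V(0)=c,\ N(s)=m\}.
\end{equation*}

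Next I substitute the cumulative law of the maximum. By (\ref{rip+paridispari}) the conditional distribution function of $M(s)$ is an explicit polynomial in $\beta/(cs)$ and $(c^2s^2-\beta^2)/(2cs)^2$, valid for $s\ge\beta/c$, and it enjoys the coincidence $P\{M(s)\le\beta\mid N(s)=2l+1\}=P\{M(s)\le\beta\mid N(s)=2l+2\}$, which lets me pair consecutive terms of the sum. The $m=0$ contribution is special: with no reversal the path climbs straight up, so $M(s)=cs$ and $P\{M(s)<\beta\mid N(s)=0\}$ equals $1$ for $s<\beta/c$ and $0$ for $s>\beta/c$.

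From here the two assertions separate. The atom at $\beta/c$ comes precisely from the $m=0$ term: to have reached $\beta$ by time $\beta/c$ the motion must ascend without reversing, i.e.\ all the reversal epochs must fall in $[\beta/c,t]$, whence $P\{F_\beta=\beta/c\mid V(0)=c,\ N(t)=2k\}=(1-\beta/(ct))^{2k}$, and likewise $(1-\beta/(ct))^{2k+1}$ in the odd case. On the open interval $(\beta/c,t]$ the density is $-\partial_s$ of the survival function displayed above. To streamline the differentiation I intend to use the self-similarity identity (\ref{derivatamax}), which replaces the $s$-derivative of $P\{M(s)\le\beta\mid N(s)=m\}$ by $-(\beta/s)$ times the corresponding density (\ref{leggemax+d2}) of the maximum, keeping every intermediate expression polynomial; the remaining derivatives then act only on the binomial weights and produce a finite difference in $m$.

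The main obstacle is the bookkeeping. The product rule generates two families of terms — one from differentiating the weights $\binom{2k}{m}(s/t)^m(1-s/t)^{2k-m}$ and one from differentiating the polynomial distribution function of $M(s)$ — and the work is to show that, after invoking the even/odd coincidence and collecting by powers of $(t-s)$ and of $(c^2s^2-\beta^2)$, everything telescopes into the single sum indexed by $j$. Here $j$ counts the completed upward excursions, i.e.\ the $2j$ reversals occurring before the upward crossing, so that $2k-2j$ reversals remain on $(s,t]$ and contribute the factor $(t-s)^{2k-2j}$; this is why the sum starts at $j=1$, the degenerate $j=0$ configuration being exactly the atom. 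The odd case $N(t)=2k+1$ is structurally identical and I would only indicate the changes. As a consistency check, setting $s=t$ annihilates every summand except $j=k$ and recovers the diagonal relation (\ref{m=t_tpp+p}) between $F_\beta$ at the terminal time and the law of $\{M(t)=\mathcal{T}(t)\}$.
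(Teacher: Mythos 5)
Your proposal follows essentially the same route as the paper's own proof: the duality $\{F_\beta>s\}=\{M(s)<\beta\}$, the binomial decomposition over $N(s)=m$ given $N(t)=2k$ (the paper's formula (\ref{max+st})), the use of the identity (\ref{derivatamax}) to convert the $s$-derivative of the maximum's distribution function into its density, the even/odd coincidence from (\ref{rip+paridispari}) to collapse the finite differences coming from the binomial weights, and the identification of the atom at $\beta/c$ with the event $N(\beta/c)=0$. The only difference is that you describe the final collection of terms as bookkeeping rather than carrying it out explicitly, but the ingredients and their roles are exactly those of the paper's computation in (\ref{tpp+_lunga})--(\ref{tpp+_addendo2}).
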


We point out that the distribution for $s>t$ is not displayed. It is interesting to observe that for $\beta \downarrow 0$, $F_\beta \longrightarrow 0\,a.s.$, conditionally on $V(0) =c$ and the number of switches. Note that if $N(t) = 1$, the absolutely continuous component (\ref{tppNdispari+}) is null.

\begin{proof}
Let $k \in \mathbb{N},\ 0<\frac{\beta}{c}<s\le t$. By considering that $P\big\{F_\beta = \frac{\beta}{c}\ \big|\ V(0)=c,\ N(t) = 2k\big\} = P\big\{N\bigl(\frac{\beta}{c}\bigr) = 0\ \big|\ V(0)=c,\ N(t) = 2k\big \}$ we obtain result (\ref{tppN+sing}).

Throughout this proof we modify notation (\ref{notazionemassimo&telegrafo}) by considering the Poisson process at time $s>0$, $N(s)$, instead of at time $t\ge s$, therefore
\begin{equation}\label{notazionemassimo&telegrafo2}
P_{n}^\pm\{\ \cdot\ \} = P\{\ \cdot\ |\ V(0) = \pm c,\ N(s) = n\}.
\end{equation} 
To prove (\ref{tppN+}) we start by studying the following distribution of the maximum. For integer $k\ge1,\ 0<\frac{\beta}{c}<s\le t$,
\begin{align}
P\{ M(s) \le\beta\ |\ V(0) = c,\ N(t) = 2k\}& = \sum_{j=0}^{2k} P\{ M(s) \le\beta,\ N(s) = j\ |\ V(0)=c,\ N(t) = 2k\}\nonumber\\
& =\sum_{j=0}^{2k} P_j^+\{M(s) \le\beta \}\,\frac{P\{N(s) = j\}\,P\{N(t-s)=2k-j\}}{P\{N(t) = 2k\}} \nonumber\\
&=\frac{1}{t^{2k}}\sum_{j=0}^{2k} P_j^+\{M(s) \le\beta \}\binom{2k}{j}s^j(t-s)^{2k-j}\label{max+st},
\end{align}
where in the second equality we used that $M(s)$ and $N(t)$ are conditionally independent with respect to $N(s)=j$. We can now can study the distribution of the first passage time.
\begin{align}
P\{F_\beta \in \dif s\ |\ V(0)=&c,\ N(t) = 2k\}/\dif s =  -\frac{\partial}{\partial s} P\{ M(s) \le\beta\ |\ V(0) = c,\ N(t) = 2k\} \nonumber\\
&\begin{array}{l}\label{tpp+_lunga}
\displaystyle
= \frac{(2k)!}{t^{2k}}\: \sum_{j=0}^{2k}\frac{1}{j!(2k-j)!}\Biggl[\frac{\beta}{s\, \dif \beta}P_j^+\{M(s)\in \dif \beta\}\,s^j(t-s)^{2k-j}\ +\\
\ \ \ - P_j^+\{M(s)\le \beta\}\biggl(js^{j-1}(t-s)^{2k-j}-(2k-j)s^j(t-s)^{2k-j-1} \biggr)\Biggr] ,
\end{array}
\end{align}
where in the last equality we used (\ref{max+st}) and (\ref{derivatamax}).
\\We can write the first term of (\ref{tpp+_lunga}) as follows
\begin{align}
\frac{\beta}{s\, \dif \beta}\sum_{j=1}^{2k}& \frac{s^j(t-s)^{2k-j}}{j!(2k-j)!} P_j^+\{M(s)\in \dif \beta\} = \frac{\beta}{\dif \beta}\sum_{l=0}^{2k-1} \frac{s^{l}(t-s)^{2k-1-l}}{(l+1)!(2k-1-l)!} P_{l+1}^+\{M(s)\in \dif \beta\} \label{tpp+_addendo1} \\
& = \frac{\beta}{cs}\sum_{h=0}^{k-1} \frac{(t-s)^{2k-1-2h }}{(2k-1-2h )!} \frac{(c^2s^2-\beta^2)^h}{h!^2(2c)^{2h }} \:+\: \frac{\beta}{\dif \beta}\,\sum_{h=0}^{k-1} \frac{s^{2h +1}(t-s)^{2k-2-2h }}{(2h +2)!(2k-2-2h )!} P_{2h +2}^+\{M(s)\in \dif \beta\},\nonumber
\end{align}
where the two sums concern the indexes $l$ even and odd respectively. We simplify the second term of (\ref{tpp+_lunga}) as follows (note that, from (\ref{rip+paridispari}), if $j$ odd, $P_{j}^+\{M(s)\le \beta\} =P_{j+1}^+\{M(s)\le \beta\}$ )
\begin{align}
\sum_{j=0}^{2k} \frac{P_j^+\{M(s)\le \beta\}}{j!(2k-j)!}&\biggl[(2k-j)s^j(t-s)^{2k-j-1}-js^{j-1}(t-s)^{2k-j} \biggr] = \nonumber\\
& = \sum_{j=0}^{2k-1}  P_j^+\{M(s)\le \beta\} \frac{s^j(t-s)^{2k-1-j}}{j!(2k-1-j)!} - \sum_{l=0}^{2k-1}  P_{l+1}^+\{M(s)\le \beta\} \frac{s^l(t-s)^{2k-1-l}}{l!(2k-1-l)!} \nonumber\\
 &= \sum_{j=0}^{k-1}\frac{s^{2j}(t-s)^{2k-1-2j}}{(2j)!(2k-1-2j)!} \Biggl( P_{2j}^+\{M(s)\le \beta\} -P_{2j+1}^+\{M(s)\le \beta\} \Biggr) \nonumber\\
& =  - \frac{\beta}{cs}\sum_{j=0}^{k-1} \frac{(t-s)^{2k-1-2j}}{j!^2(2k-1-2j)!} \frac{(c^2s^2-\beta^2)^j}{(2c)^{2j}},  \label{tpp+_addendo2}
\end{align} 
where in the last equation we used (\ref{rip+paridispari}) and compute some trivial calculations. Finally, by applying (\ref{tpp+_addendo1}) and (\ref{tpp+_addendo2}) in formula (\ref{tpp+_lunga}) we obtain the claimed result (\ref{tppN+}).

The proof for the odd case work in the same way and therefore it is omitted.
\end{proof}

It is important to notice that, for natural $k$ and $0<\frac{\beta}{c}<t$
\begin{align}
P \{F_\beta \in \dif t\ |\ V(0)=c,\ N(t) = 2k\} &= \frac{2(2k-1)!}{(k-1)!^2}\frac{(c^2t^2-\beta^2)^{k-1}}{(2ct)^{2k-1}} \frac{\beta}{t} \dif t\label{tppN+t}\\
& =  \frac{\beta\, \dif t}{t \, \dif \beta}  P\{M(t) \in \dif \beta\ |\ V(0)=c,\ N(t) = 2k\}.\label{tppN+t_relazione}
\end{align}
Formula (\ref{tppN+t_relazione}) connect the density of the first passage time across level $\beta$ with the density of the maximum in a fascinating way which resembles the relationship between the two distributions in the case of Brownian motion.

\begin{thm}
Let $F_\beta$ be the first passage time of a symmetric telegraph process across level $\beta>0$. For $0<\frac{\beta}{c}< t$,
\begin{equation}\label{tpp+}
P\{F_\beta \in \dif t\ |\ V(0)=c\} = \frac{\lambda \beta\,e^{-\lambda t}}{\sqrt{c^2t^2-\beta^2}} I_1\Bigl( \frac{\lambda}{c}\sqrt{c^2t^2-\beta^2}\Bigr) \dif t
\end{equation}
and
\begin{equation}\label{tpp+sing}
P\Big\{F_\beta = \frac{\beta}{c}\ \Big|\ V(0)=c\Big\} =e^{-\lambda \frac{\beta}{c}}.
\end{equation}
\end{thm}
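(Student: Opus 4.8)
The plan is to derive both the atom (\ref{tpp+sing}) and the absolutely continuous density (\ref{tpp+}) by averaging the conditional laws of Theorem \ref{teorematppN+} over the number of reversals. Since the point process $N$ is independent of $V(0)$, the Poisson weights $P\{N(t)=n\}=e^{-\lambda t}(\lambda t)^n/n!$ give
\[
P\{F_\beta\in\,\cdot\,\mid V(0)=c\}=\sum_{n=0}^{\infty}P\{F_\beta\in\,\cdot\,\mid V(0)=c,\,N(t)=n\}\,\frac{e^{-\lambda t}(\lambda t)^{n}}{n!}.
\]

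First I would treat the singular part. The conditional masses in Theorem \ref{teorematppN+} combine, for even and odd $n$ alike, into $P\{F_\beta=\beta/c\mid V(0)=c,\,N(t)=n\}=(1-\beta/(ct))^{n}$ for every $n\ge0$, so that
\[
P\Bigl\{F_\beta=\tfrac{\beta}{c}\ \Big|\ V(0)=c\Bigr\}=e^{-\lambda t}\sum_{n=0}^{\infty}\frac{\bigl(\lambda t(1-\beta/(ct))\bigr)^{n}}{n!}=e^{-\lambda t}\,e^{\lambda t-\lambda\beta/c}=e^{-\lambda\beta/c},
\]
which recovers (\ref{tpp+sing}); equivalently, this is just the probability that no Poisson event falls in $[0,\beta/c]$.

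For the density I would use that, at the right endpoint $s=t$, only the even conditional laws survive: every summand of (\ref{tppNdispari+}) carries a factor $(t-s)^{2k+1-2j}$ with $2k+1-2j\ge 1$, which vanishes at $s=t$, while in (\ref{tppN+}) only the $j=k$ term persists and reproduces the evaluated density (\ref{tppN+t}). Hence only $N(t)=2k$ with $k\ge1$ contributes, and
\[
P\{F_\beta\in\dif t\mid V(0)=c\}=\sum_{k=1}^{\infty}\frac{2(2k-1)!}{(k-1)!^{2}}\,\frac{(c^2t^2-\beta^2)^{k-1}}{(2ct)^{2k-1}}\,\frac{\beta}{t}\,\frac{e^{-\lambda t}(\lambda t)^{2k}}{(2k)!}\,\dif t.
\]
Using $2(2k-1)!/(2k)!=1/k$ to turn the factorials into $1/(k!(k-1)!)$, cancelling the explicit powers of $t$, and reindexing with $j=k-1$, the series reduces to $2c\sum_{j\ge0}\frac{(\lambda/(2c))^{2j+2}}{j!(j+1)!}(c^2t^2-\beta^2)^{j}$, which by the defining series of the modified Bessel function equals $\frac{\lambda}{\sqrt{c^2t^2-\beta^2}}\,I_1\bigl(\frac{\lambda}{c}\sqrt{c^2t^2-\beta^2}\bigr)$. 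Multiplying by the prefactor $\beta e^{-\lambda t}\dif t$ then yields exactly (\ref{tpp+}).

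The difficulty is organizational rather than deep: one must justify that the odd-parity densities drop out at $s=t$ (so that the velocity is necessarily $+c$ at the crossing) and then match the emerging power series to $I_1$ after the factorial simplification. A convenient consistency check is that (\ref{tpp+}) is tied to (\ref{m=t+}) through $P\{M(t)=\mathcal{T}(t)\in\dif\beta\mid V(0)=c\}=\frac{\dif\beta}{c\,\dif t}\,P\{F_\beta\in\dif t\mid V(0)=c\}$, the unconditional analogue of (\ref{m=t_tpp+p}) and (\ref{tppN+t_relazione}).
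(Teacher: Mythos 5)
Your proposal is correct, and it follows the same basic decomposition as the paper --- conditioning on $N(t)$, observing that only even reversal counts can contribute to the density at the endpoint $s=t$, and summing the endpoint densities of Theorem \ref{teorematppN+} against the Poisson weights. Where you diverge is in the final evaluation: the paper converts each term $P\{F_\beta\in\dif t\,|\,V(0)=c,\,N(t)=2k\}$ into a maximum density via the identity (\ref{tppN+t_relazione}), recognizes the resulting sum as $P\{M(t)\in\dif\beta,\ N(t)\ \text{even}\,|\,V(0)=c\}$, and then cites the closed form (formula (3.23) of \cite{CO2020}) for that joint density; you instead substitute the explicit expression (\ref{tppN+t}) and resum the power series directly against the defining expansion of $I_1$, which makes your argument self-contained (modulo Theorem \ref{teorematppN+}) at the cost of a factorial manipulation the paper avoids. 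Your series computation checks out: $2(2k-1)!/(2k)!=1/k$ turns the coefficient into $1/(k!(k-1)!)$, the powers of $t$ cancel exactly, and the reindexed series matches $\frac{\lambda}{\sqrt{c^2t^2-\beta^2}}\,I_1\bigl(\frac{\lambda}{c}\sqrt{c^2t^2-\beta^2}\bigr)$. Two further points in your favour: you explicitly justify why the odd-parity densities (\ref{tppNdispari+}) drop out at $s=t$ (each summand carries a strictly positive power of $t-s$), a step the paper passes over in silence, and you actually prove the atom (\ref{tpp+sing}), which the paper's proof never addresses --- though for that part the one-line argument $P\{F_\beta=\beta/c\,|\,V(0)=c\}=P\{N(\beta/c)=0\}=e^{-\lambda\beta/c}$ suffices and makes the geometric-series summation over $(1-\beta/(ct))^n$ unnecessary.
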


\begin{proof}
By means of (\ref{tppN+t_relazione}) we have
\begin{align*}
P\{F_\beta \in \dif t\ |\ V(0)=c\}& =\frac{\beta\, \dif t}{t \, \dif \beta} \sum_{k=1}^{\infty} P\{M(t) \in \dif \beta,\ N(t) = 2k\ |\ V(0) = c\} \\
&=\frac{\beta\, \dif t}{t \, \dif \beta} P\{M(t)\in \dif \beta, N(t) \ \text{even}\ |\ V(0) = c \},
\end{align*}
and probability (\ref{tpp+}) easily follows by considering that
\begin{equation*}
 P\{M(t)\in \dif \beta,\ N(t) \ \text{even}\ |\ V(0) = c \}=\frac{ \lambda t\,e^{-\lambda t}}{\sqrt{c^2t^2 -\beta^2}} I_1\Bigl( \frac{\lambda }{c} \sqrt{c^2t^2 -\beta^2} \Bigr)\dif \beta,
\end{equation*}
that is formula (3.23) of \cite{CO2020}.
\end{proof}

Clearly, for $\beta \downarrow 0$, $F_\beta \longrightarrow 0\,a.s.$, if we know that $V(0) =c$. Moreover, distribution (\ref{tpp+}) converges to the distribution of the first passage time of Brownian motion under Kac's conditions, $\lambda,c \longrightarrow \infty$ such that $\frac{\lambda}{c^2}\longrightarrow 1$. The reader can prove this assertion by considering the asymptotic estimate of the Bessel functions, when $x\longrightarrow \infty$, $I_r(x)\sim \frac{e^x}{\sqrt{2\pi x}}$ for $r=0,1$.

\subsection{First passage time: motion starting with negative velocity}

In the case of the telegraph process starting with negative speed, the first passage time across a level $\beta>0$ can occur at time $t>\frac{\beta}{c}$ and only if $N(t)$ is odd. In this case the unconditional distribution has no singularity in $\frac{\beta}{c}$ and $T_\beta > \beta/c \ a.s.$ under any condition.

\begin{thm}
Let $F_\beta$ be the first passage time of a symmetric telegraph process across level $\beta>0$. For $0<\frac{\beta}{c}<s\le t$ and $k \in \mathbb{N}_0$
\begin{align}
P\{F_\beta \in \dif s\ |\ &V(0)=-c,\ N(t) = 2k+1\} \label{tppN-}\\
&= \frac{(2k+1)!}{t^{2k+1}} \sum_{j=0}^k \frac{(t-s)^{2k-2j}}{j!(j+1)!(2k-2j)!}\frac{(c^2s^2-\beta^2)^{j-1}(cs-\beta)\bigl[cs+(2j+1)\beta\bigr]}{2^{2j+1}c^{2j}}\dif s\nonumber
\end{align}
and, for $k \in \mathbb{N}$
\begin{align}
P\{F_\beta \in \dif s\ |\ &V(0)=-c,\ N(t) = 2k\} \label{tppNpari-}\\
&= \frac{(2k)!}{t^{2k}} \sum_{j=0}^k \frac{(t-s)^{2k-1-2j}}{j!(j+1)!(2k-1-2j)!}\frac{(c^2s^2-\beta^2)^{j-1}(cs-\beta)\bigl[cs+(2j+1)\beta\bigr]}{2^{2j+1}c^{2j}}\dif s\nonumber
\end{align}
\end{thm}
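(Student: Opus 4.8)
The plan is to reduce the first passage time to the maximum through the pathwise identity $\{F_\beta \le s\} = \{M(s) \ge \beta\}$, which holds because the motion is continuous. Hence the conditional density of $F_\beta$ is the negative $s$-derivative of the conditional distribution function of the maximum,
\[
P\{F_\beta \in \dif s \mid V(0) = -c,\ N(t) = n\}/\dif s = -\frac{\partial}{\partial s}\, P\{M(s) \le \beta \mid V(0) = -c,\ N(t) = n\}.
\]
As in the proof of Theorem \ref{teorematppN+}, I adopt the notation (\ref{notazionemassimo&telegrafo2}), in which $P_j^-$ conditions on $N(s)=j$. Conditioning on the value of $N(s)$, using that $M(s)$ is independent of $\{N(u)\}_{u>s}$ given $N(s)$, and recalling that $N(s)$ is binomial with parameters $(n,s/t)$ given $N(t)=n$, I obtain
\[
P\{M(s) \le \beta \mid V(0) = -c,\ N(t) = n\} = \frac{1}{t^{n}} \sum_{j=0}^{n} \binom{n}{j} s^{j}(t-s)^{n-j}\, P_j^-\{M(s) \le \beta\},
\]
the exact analogue of (\ref{max+st}) for the negative initial velocity.

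Differentiating in $s$ produces two families of terms. When $\partial_s$ falls on $P_j^-\{M(s)\le\beta\}$, the quasi-similarity relation (\ref{derivatamax}) lets me replace $-\partial_s P_j^-\{M(s)\le\beta\}$ by $\tfrac{\beta}{s\,\dif\beta}P_j^-\{M(s)\in\dif\beta\}$, turning this family into a weighted sum of maximum densities. When $\partial_s$ falls on the binomial weight, the re-indexing already used in (\ref{tpp+_addendo2}) collapses the contribution to
\[
n\sum_{j=0}^{n-1}\binom{n-1}{j}s^{j}(t-s)^{n-1-j}\bigl(P_j^-\{M(s)\le\beta\}-P_{j+1}^-\{M(s)\le\beta\}\bigr),
\]
so that only consecutive differences of the distribution functions survive.

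It then remains to insert the explicit expressions. For the densities I use (\ref{leggemax-p}) for $j$ even and the odd-order maximum density of Theorem $4.2$ of \cite{CO2020} (recalled just before Theorem \ref{teoremamet-d}) for $j$ odd, noting $P_0^-\{M(s)\in\dif\beta\}=0$ for $\beta>0$. For the differences I combine the three distribution-function relations (\ref{rip+paridispari}), (\ref{massimo-prip}) and (\ref{massimo-drip}): for $j=2i$ these give the closed form
\[
P_{2i}^-\{M(s)\le\beta\}-P_{2i+1}^-\{M(s)\le\beta\} = \frac{1}{2i+2}\binom{2i}{i}\frac{(c^2s^2-\beta^2)^{i}}{(2cs)^{2i}}\,\frac{cs-\beta}{cs},
\]
and an analogous expression (with an extra term of order $i+1$) holds for $j=2i+1$. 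Separating the two surviving families by the parity of $j$ and re-indexing each as a sum over $i$, the factor $cs-\beta$ appears throughout, so that $(c^2s^2-\beta^2)^{i-1}(cs-\beta)=(cs-\beta)^i(cs+\beta)^{i-1}$ emerges in every summand.

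The main obstacle is this final recombination. I expect to show that, after collecting the common powers of $(c^2s^2-\beta^2)$, $(cs-\beta)$ and the binomial coefficients, the maximum-density family and the distribution-function-difference family add up to the single sum carrying the coefficient $\tfrac{1}{i!(i+1)!(2k-2i)!}$ and the bracket $[cs+(2i+1)\beta]$. This parallels the cancellation of (\ref{tpp+_addendo1})--(\ref{tpp+_addendo2}) in the positively oriented case, but here the additive correction $\binom{2i}{i}(c^2s^2-\beta^2)^i/(2cs)^{2i}$ in (\ref{massimo-prip}) and the convex weighting $\tfrac{2i+1}{2i+2}$ in (\ref{massimo-drip}) generate extra contributions whose combination is precisely what produces the factor $[cs+(2i+1)\beta]$; verifying this collapse is the heart of the computation. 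The case $N(t)=2k+1$ would be carried out in full, while the case $N(t)=2k$ is identical up to replacing the prefactor $\tfrac{(2k+1)!}{t^{2k+1}}$ by $\tfrac{(2k)!}{t^{2k}}$ and lowering the exponent of $(t-s)$ by one, and is therefore only indicated.
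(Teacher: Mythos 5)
Your proposal follows essentially the same route as the paper's proof: reduce $F_\beta$ to the maximum via the binomial decomposition over $N(s)$, differentiate in $s$ using (\ref{derivatamax}), and express the surviving weight-derivative terms through consecutive differences of the maximum distribution functions obtained from (\ref{rip+paridispari}), (\ref{massimo-prip}) and (\ref{massimo-drip}), exactly as in Theorem \ref{teorematppN+}. The paper's own proof is just as brief — it says to proceed as in Theorem \ref{teorematppN+} and records only the genuinely new ingredient, the non-vanishing odd-to-even difference $P_{2k+1}^-\{M(s)\le\beta\}-P_{2k+2}^-\{M(s)\le\beta\} = -2\beta \binom{2k+1}{k}(c^2s^2-\beta^2)^k(cs-\beta)/(2cs)^{2k+2}$, which is precisely the ``analogous expression with an extra term'' that you indicate (your even-to-odd difference formula is correct) but leave unwritten.
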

We point out that the distribution for $s>t$ is not displayed. Note that for $\beta \downarrow 0$, probability (\ref{tppN-}) reduces to $\frac{(2k+1)!}{2t^{2k+1}} \sum_{j=0}^k \frac{(t-s)^{2k-2j}s^{2j}}{2^{2j}j!(j+1)!(2k-2j)!}\dif s$, which does not depend on the velocity of the motion (an equivalent result follows from (\ref{tppNpari-}) ).

\begin{proof}
To prove the theorem we must proceed as we showed in the proof of Theorem \ref{teorematppN+}. In this case, the difference is that, considering notation (\ref{notazionemassimo&telegrafo2}), for integer $k \ge0$, we have
\begin{equation}
P_{2k+1}^-\{M(s)\le\beta\}-P_{2k+2}^-\{M(s)\le\beta\} = -2\beta \binom{2k+1}{k}\frac{(c^2s^2-\beta^2)^k}{(2cs)^{2k+2}}(cs-\beta),
\end{equation}
which follow from (\ref{massimo-prip}) and (\ref{massimo-drip}). Note that in the case of $V(0) = c$ this difference is null.
\end{proof}

It is important to notice that, for $k\in \mathbb{N}_0,\ 0<\frac{\beta}{c}<t$
\begin{equation}\label{tppN-t}
P \{F_\beta \in \dif t\ |\ V(0)=-c,\ N(t) = 2k+1\} = \binom{2k+1}{k}\frac{(c^2t^2-\beta^2)^{k-1}(ct-\beta)\bigl[ct+(2k+1)\beta \bigr]}{c^{2k}(2t)^{2k+1}} \dif t.
\end{equation}

\begin{thm}
Let $F_\beta$ be the first passage time of a symmetric telegraph process across level $\beta>0$. For $0<\frac{\beta}{c}< t$
\begin{equation}\label{tpp-}
P\{F_\beta \in \dif t\ |\ V(0)=-c\}  = \frac{e^{-\lambda t}}{ct+\beta} \Biggr[ \lambda \beta I_0\Bigl( \frac{\lambda}{c}\sqrt{c^2t^2-\beta^2}\Bigr)+ c\sqrt{\frac{ct-\beta}{ct+\beta}} I_1\Bigl( \frac{\lambda}{c}\sqrt{c^2t^2-\beta^2}\Bigr)\Biggr] \dif t.
\end{equation}
\end{thm}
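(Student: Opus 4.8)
The plan is to obtain the unconditional density by summing the conditional first passage densities at the endpoint $s=t$ against the Poisson law of $N(t)$, exactly as in the proof of the positive-velocity result (\ref{tpp+}). The starting observation is that, for a motion launched with $V(0)=-c$, a first passage occurring precisely at the observation time $t$ forces the particle to be moving upward at $t$, hence to have performed an odd number of switches in $[0,t]$. Evaluating the even formula (\ref{tppNpari-}) at $s=t$ indeed annihilates every term (each carries a positive power of $t-s$), so only odd values of $N(t)$ survive, and I would write
\begin{equation*}
P\{F_\beta \in \dif t\ |\ V(0)=-c\} = \sum_{k=0}^{\infty} P\{F_\beta \in \dif t\ |\ V(0)=-c,\ N(t) = 2k+1\}\,P\{N(t)=2k+1\},
\end{equation*}
inserting the conditional endpoint density (\ref{tppN-t}) together with $P\{N(t)=2k+1\}=e^{-\lambda t}(\lambda t)^{2k+1}/(2k+1)!$.

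Unlike the positive case, here I cannot shortcut through the maximum: a direct check shows that the clean identity (\ref{tppN+t_relazione}) between the first passage density and the density of the maximum breaks down when $V(0)=-c$, since the bracket $[ct+(2k+1)\beta]$ is not proportional to $\frac{\beta}{ct}[(2k+1)ct+\beta]$, so the series must be summed explicitly. Substituting and using $\binom{2k+1}{k}/(2k+1)! = 1/(k!(k+1)!)$ together with the cancellation $(\lambda t)^{2k+1}/(2t)^{2k+1} = (\lambda/2)^{2k+1}$, the general term reduces, with $A \coloneqq \frac{\lambda}{c}\sqrt{c^2t^2-\beta^2}$ and after exploiting $c^2t^2-\beta^2=(ct-\beta)(ct+\beta)$, to
\begin{equation*}
\frac{\lambda\,e^{-\lambda t}}{2(ct+\beta)}\,\frac{1}{k!(k+1)!}\Bigl(\frac{A}{2}\Bigr)^{2k}\bigl[(ct+\beta)+2k\beta\bigr]\dif t,
\end{equation*}
where I have split the numerator as $ct+(2k+1)\beta=(ct+\beta)+2k\beta$ so as to separate a constant-weighted from a $k$-weighted series.

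The core computation is then the evaluation of the two resulting power series. The first, $\sum_{k\ge0}(A/2)^{2k}/(k!(k+1)!)$, is immediately $2I_1(A)/A$ from the series definition of $I_1$. The $2k$-weighted series is the delicate point: I would re-index it by writing $2k=2(k+1)-2$, which turns $\sum_{k\ge0}2k\,(A/2)^{2k}/(k!(k+1)!)$ into $2\sum_{k\ge0}(A/2)^{2k}/(k!)^2-2\sum_{k\ge0}(A/2)^{2k}/(k!(k+1)!)=2I_0(A)-4I_1(A)/A$. Combining the two sums collapses the $I_1$ contributions into a single factor $ct-\beta$, giving $\frac{2I_1(A)}{A}(ct-\beta)+2\beta I_0(A)$; the substitution $\frac{\lambda}{A}(ct-\beta)=c\sqrt{(ct-\beta)/(ct+\beta)}$ then produces exactly (\ref{tpp-}). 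The main obstacle is precisely this re-indexing and recombination step, along with the bookkeeping that keeps the $k=0$ term finite, the apparent singularity $(c^2t^2-\beta^2)^{-1}$ being cancelled by the factor $(ct-\beta)(ct+\beta)$ carried in the numerator.
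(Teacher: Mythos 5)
Your proposal is correct and follows essentially the same route as the paper: summing the conditional endpoint density (\ref{tppN-t}) against the Poisson weights $P\{N(t)=2k+1\}$ over odd numbers of switches and recognizing the resulting series as $I_0$ and $I_1$. The only difference is cosmetic: you split $ct+(2k+1)\beta=(ct+\beta)+2k\beta$ and re-index, whereas the paper suggests the slightly more direct split $ct+(2k+1)\beta=(2k+2)\beta+(ct-\beta)$, in which the factor $(2k+2)$ cancels against $(k+1)!$ to give $I_0$ without any re-indexing.
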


\begin{proof}
$$P\{F_\beta \in \dif t\ |\ V(0)=-c\} = \sum_{k=0}^{\infty} P\{F_\beta \in \dif t\ |\ V(0)=-c,\ N(t) = 2k+1\}P\{N(t) = 2k+1\}$$
and the result (\ref{tpp-}) follows by means of (\ref{tppN-t}) (we suggest to rewrite its last factor considering that $\bigl[ct+(2k+1)\beta \bigr] = (2k+2)\beta +(ct-\beta)$ ).
\end{proof}

Under Kac's conditions, distribution (\ref{tpp-}) converges to the distribution of the first passage time of Brownian motion. Finally, if $\beta\downarrow 0$, density (\ref{tpp-}) reads $e^{-\lambda t}/t\,I_1(\lambda t)$ and as above, this quantity does not depend on $c$.
\\

We want to recall that distributions (\ref{tppN+t}), (\ref{tpp+}), (\ref{tppN-t}) and  (\ref{tpp-}) were first proved in \cite{FK1994} by means of the Darlin-Siegert's relationship. The reader can also find more general expression in Theorem 3.8 of \cite{KR2013}.

\subsection{Return time to the origin}

The results on the first passage time easily yield the distribution of the first returning time to the origin for the telegraph process, $F_0 =\inf\{s>0\ :\ \mathcal{T}(s)=0\}$.
It is clear that the probabilities of the first returning time conditioned on $V(0)=c$ coincide with those conditioned on $V(0)=-c$.

\begin{thm}
Let $F_0$ be the first returning time to the origin of a symmetric telegraph process. Let $v=\pm c$. For $0<s\le t$
\begin{equation}\label{tpp0N1}
P\{F_0\in \dif s\ |\ V(0)=v,\ N(t) = 1\} = \frac{\dif s}{2t},
\end{equation}
 and, for $k \in \mathbb{N}$
\begin{align}\label{tpp0Ndispari}
P\{F_0 \in \dif s\ |\ V(0)=v,\ N(t) = 2k+1\} = \frac{(2k+1)!}{t^{2k+1}} \sum_{j=1}^{k} \frac{(t-s)^{2k-2j}\,s^{2j}}{j!(j+1)!(2k-2j)!\,2^{2j+1}}\dif s,
\end{align}
\begin{align}\label{tpp0Npari}
P\{F_0 \in \dif s\ |\ V(0)=v,\ N(t) = 2k\} = \frac{(2k)!}{t^{2k}} \sum_{j=1}^{k-1} \frac{(t-s)^{2k-1-2j}\,s^{2j}}{j!(j+1)!(2k-1-2j)!\,2^{2j+1}}\dif s.
\end{align}
Furthermore, for $t>0$
\begin{equation}\label{tpp0}
P\{F_0\in \dif t\ |\ V(0)=v\}=\frac{e^{-\lambda t}}{t}I_1(\lambda t)\dif t.
\end{equation}
\end{thm}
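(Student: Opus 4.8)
The plan is to obtain the first returning time $F_0$ from the first passage time $F_\beta$ by letting the level $\beta\downarrow 0$, exploiting that a negatively oriented motion must climb back up through the origin before it can reach any positive level. First I would record the symmetry $\mathcal{T}\leftrightarrow-\mathcal{T}$: flipping the sign of the motion sends a path with $V(0)=c$ to one with $V(0)=-c$ while leaving the set of times at which $\mathcal{T}=0$ unchanged, so the law of $F_0$ does not depend on the sign of $V(0)$ (as already noted before the statement). Hence it suffices to work with $V(0)=-c$, for which the first passage distributions (\ref{tppN-}), (\ref{tppNpari-}) and (\ref{tpp-}) are available. The key probabilistic input is that, conditionally on $V(0)=-c$ and on the switching times, the particle starts downward and can exceed a level $\beta>0$ only after its first up-crossing of the origin, which is exactly $F_0$; since for almost every path there is a strictly positive gap between $F_0$ and the next reversal, for all small enough $\beta$ one has $F_\beta=F_0+\beta/c$, so $F_\beta\downarrow F_0$ almost surely, both unconditionally and under $N(t)=n$.

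I would then pass to the limit in the explicit densities. In (\ref{tppN-}) and (\ref{tppNpari-}) the $\beta$-dependent factor satisfies $(c^2s^2-\beta^2)^{j-1}(cs-\beta)\bigl[cs+(2j+1)\beta\bigr]\to c^{2j}s^{2j}$, so after the cancellation of the $c^{2j}$ in the denominator each summand tends to $\tfrac{s^{2j}}{2^{2j+1}}$ times the corresponding ratio of factorials; this reproduces the conditional densities (\ref{tpp0Ndispari}) and (\ref{tpp0Npari}), the elementary case $N(t)=1$ in (\ref{tpp0N1}) being the $k=0$ instance, for which (\ref{tppN-}) already equals $\tfrac{1}{2t}$ independently of $\beta$. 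For the unconditional law I would let $\beta\downarrow 0$ in (\ref{tpp-}): the $I_0$ term carries a factor $\lambda\beta\to 0$, the coefficient of $I_1$ tends to $c$, and the prefactor to $e^{-\lambda t}/(ct)$, so the bracket collapses to $c\,I_1(\lambda t)$ and one recovers (\ref{tpp0}). Equivalently, (\ref{tpp0}) follows by summing the conditional densities against the Poisson weights $P\{N(t)=n\}$.

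The delicate point is the lowest-order ($j=0$) \emph{direct return} contribution, where the interchange of limit and density must be justified rather than taken for granted. This term corresponds to the ballistic event in which the particle goes down, reverses once, and climbs straight back to the origin at time $2T_1$; in the first passage picture it is the \emph{singular} atom of $F_\beta$ at $\beta/c$ rather than the absolutely continuous part, so I must check that this mass neither escapes nor appears spuriously in the limit. I expect this to require the most care, and I would make it rigorous either by dominating the conditional densities uniformly in $\beta$ on compact $s$-intervals bounded away from the switch times, or, more transparently, by an alternative derivation that decomposes on the first reversal $T_1=t_1$: reflecting the post-$t_1$ segment about its initial height turns the first return to $0$ into the first passage across level $ct_1$ of an independent fresh motion with $V(0)=+c$ and $N-1$ reversals on $[0,t-t_1]$, giving $F_0=t_1+F^{+}_{ct_1}$. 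Integrating this identity against the first-arrival density $n(t-t_1)^{n-1}/t^{n}$ and inserting Theorem~\ref{teorematppN+}, including its singular component (which yields the direct returns $F_0=2t_1$ through the change of variable $s=2t_1$), reproduces the same finite sums and pins down the lowest-order term unambiguously.
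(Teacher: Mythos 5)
Your primary route (the monotone limit $\beta\downarrow 0$ of the negative\--velocity first\--passage laws) is genuinely different from the paper's proof, which is exactly your fallback: condition on the first reversal, write $F_0=T_1+F^{+}_{cT_1}$ via (\ref{relazioneTempiBetaV}), and integrate Theorem \ref{teorematppN+} over $t_1\in(0,s/2)$. The genuine gap lies in your central claim that the limit of (\ref{tppN-}) and (\ref{tppNpari-}) ``reproduces'' (\ref{tpp0Ndispari}) and (\ref{tpp0Npari}). Those first\--passage sums start at $j=0$, and the $j=0$ summand does not depend on $\beta$ at all, because $(c^2s^2-\beta^2)^{-1}(cs-\beta)(cs+\beta)=1$: it equals $\frac{(2k+1)(t-s)^{2k}}{2t^{2k+1}}\dif s$ in the odd case and $\frac{k(t-s)^{2k-1}}{t^{2k}}\dif s$ in the even case, and it survives the limit. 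The stated return\--time sums start at $j=1$, so your limit produces a strictly larger density than the one you set out to prove. You invoke precisely this $\beta$-free $j=0$ term to settle $N(t)=1$, then silently drop it for $k\ge 1$. It is not a spurious artifact of interchanging limit and density: it is the direct\--return contribution $\{F_0=2T_1,\ T_2>2T_1\}$. Concretely, for $V(0)=-c$ and $N(t)=2$ the particle returns to the origin iff $T_2\ge 2T_1$, so $P\{F_0\in\dif s\ |\ V(0)=-c,\ N(t)=2\}=\frac{t-s}{t^2}\dif s$, which is exactly the $j=0$ term, whereas (\ref{tpp0Npari}) with $k=1$ is an empty sum. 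Carried out consistently, your method therefore proves the formulas with $\sum_{j\ge 0}$, not the stated ones; asserting the match is where the proof fails, and it in fact exposes that the displayed sums omit the direct\--return term.

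Your fallback does not repair this, contrary to what you claim. Keeping the singular component $P\{F^{+}_{ct_1}=t_1\ |\ V(0)=c,\ N(t-t_1)=n-1\}=\bigl(\frac{t-2t_1}{t-t_1}\bigr)^{n-1}$, as you rightly insist one must, its contribution under the substitution $s=2t_1$ is $\frac{n(t-s)^{n-1}}{2t^{n}}\dif s$ --- again the $j=0$ term --- while integrating only the absolutely continuous parts (\ref{tppN+}) and (\ref{tppNdispari+}) yields exactly the displayed sums $\sum_{j\ge 1}$. So your two computations agree with each other but not with the statement: ``reproduces the same finite sums'' is false on the second count as well, and the discrepancy is precisely the singular component that the paper's own calculation evidently dropped. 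A smaller slip in your wording: for $V(0)=-c$ the law of $F_\beta$ has no atom at $\beta/c$ (the paper notes $F_\beta>\beta/c$ a.s.\ under any condition), so the direct return is not ``the singular atom of $F_\beta$ at $\beta/c$'' but the $j=0$ summand of its absolutely continuous density; it becomes an atom only for the inner passage time $F^{+}_{ct_1}$ of the decomposition. None of this affects the endpoint or unconditional results: the $j=0$ term vanishes at $s=t$ for $k\ge 1$, so $P\{F_0\in\dif t\ |\ V(0)=v,\ N(t)=2k+1\}=\binom{2k+1}{k}\frac{\dif t}{t\,2^{2k+1}}$ and hence (\ref{tpp0}) stand, and your limit of (\ref{tpp-}) does recover (\ref{tpp0}) correctly.
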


Note that all these results are independent of $c$.

\begin{proof}
Without any loss of generality we consider $V(0)=c$.

Probability (\ref{tpp0N1}) follows by observing that if $N(t)=1$ it is necessary that the change of direction occurs at time $s/2$ in order to return to the origin at time $s$.

In order to return in $0$ at time $s$, the motion need at least one change of direction before time $s/2$. Now, with (\ref{relazioneTempiBetaV}) at hand, we immediately obtain
\begin{align*}
P\{F_0 \in \dif s\ |\ &V(0)=c,\ N(t) = n\} \\
&= \int_0^{\frac{s}{2}} P\{F_{ct_1}  \in \dif s-t_1\ |\ V(0)=c,\ N(t-t_1) = n-1\}P\{T_1\in \dif t_1\ |\ N(t) =n\}
\end{align*}
and results (\ref{tpp0Ndispari}) and (\ref{tpp0Npari}) follows by using Theorem \ref{teorematppN+}.

If $s = t$, we can reach the $0$ only if $V(t)=-c$ (we are assuming $V(0)=c$), meaning that an odd number of switches occur up to time $t$. Now, for $k\in\mathbb{N}_0$, we have that 
\begin{equation}
P\{F_0 \in \dif t\ |\ V(0)=v,\ N(t) = 2k+1\} = \binom{2k+1}{k}\frac{\dif t}{t\,2^{2k+1}} =P^-_{2k+1}\{M(t) = 0\}\dif t /t.
\end{equation}
Hence, for $t>0$, we immediately obtain (\ref{tpp0}).
\end{proof}

%If your paper includes appendices, then precede the first of them by the command

%\appendix

%and then carry on using the \section and \subsection commands, as above.

%\section{The first appendix}

%If you include EPS (encapsulated postscript) figures in your paper,
%then please use the following commands:
%\begin{figure}
%\begin{center}
%\includegraphics{.eps}
%\caption{Caption text.}\label{}
%\end{center}
%\end{figure}

%\acks

% Place the text of your acknowledgements after the \acks command.
% \acks generates the heading "Acknowledgements".
% If you wish to make only one acknowledgement, use \ack.
% \ack generates the heading "Acknowledgement".

% Reference list
%
% References should be in the following form (or the BibTeX file
% apt.bst should be used):
%
% For a journal:
% Surname, Initial (year). Title of paper. {\em Journal title}
% {\bf Vol,} page--range.
%
% For a book:
% Surname, Initial (year). {\em Book title}. Publisher, Address.
%
% Note the following example of a reference list.

\end{document}